\newtheorem{theorem}{Theorem}[section]
\newtheorem{lemma}[theorem]{Lemma}
\newtheorem{proposition}[theorem]{Proposition}
\newtheorem{corollary}[theorem]{Corollary}
\theoremstyle{definition}
\newtheorem{definition}[theorem]{Definition}
\newtheorem{question}[theorem]{Question}
\theoremstyle{remark}
\newtheorem{example}[theorem]{Example}
\newtheorem{remark}[theorem]{Remark}
\newtheorem*{acknowledgments}{Acknowledgments}
\newcommand{\Mod}[1]{\ (\text{mod}\ #1)}
\def\A{{\mathbb A}}
\def\C{{\mathbb C}}
\def\DD{{\mathbb D}}
\def\N{{\mathbb N}}
\def\Q{{\mathbb Q}}
\def\R{{\mathbb R}}
\def\Z{{\mathbb Z}}
\def\O{\mathcal{O}}
\def\fa{\mathfrak{a}}
\def\fm{\mathfrak{m}}
\def\a{\alpha}
\def\b{\beta}
\def\g{\gamma}
\def\f{\phi}
\def\ff{\psi}
\def\e{\eta}
\def\ep{\epsilon}
\def\m{\mu}
\def\p{\pi}
\def\s{\sigma}
\def\t{\tau}
\def\D{\Delta}
\def\G{\Gamma}
\def\Om{\Omega}
\def\.{\cdot}
\def\~{\widetilde}
\def\o{\circ}
\def\ov{\overline}
\def\rat{\dashrightarrow}
\def\({\left(}
\def\){\right)}
\def\liminv{\underleftarrow{\lim}}
\renewcommand{\and}{ \ \ \text{ and } \ \ }
\renewcommand{\for}{ \ \ \text{ for } \ \ }
\def\reg{\mathrm{reg}}
\def\sm{\mathrm{sm}}
\def\main{\mathrm{main}}
\def\sing{\mathrm{sing}}
\def\red{\mathrm{red}}
\def\an{\mathrm{an}}
\def\Jac{\mathrm{Jac}}
\DeclareMathOperator{\codim} {codim}
\DeclareMathOperator{\Spec} {Spec}
\DeclareMathOperator{\Bl} {Bl}
\DeclareMathOperator{\val} {val}
\DeclareMathOperator{\Ex} {Ex}
\DeclareMathOperator{\mult} {mult}
\DeclareMathOperator{\ord} {ord}
\DeclareMathOperator{\Supp} {Supp}
\DeclareMathOperator{\Fitt} {Fitt}
\DeclareMathOperator{\Hom} {Hom}
\begin{document}

\title{The space of arcs of an algebraic variety}

\author{Tommaso de Fernex}
\address{Department of Mathematics, University of Utah, Salt Lake City, UT 48112, USA}
\email{{\tt defernex@math.utah.edu}}
\thanks{The research was partially supported by 
NSF grant DMS-1402907 and NSF FRG grant DMS-1265285.}

\dedicatory{In memory of John Forbes Nash, Jr.}

\subjclass[2010]{Primary 14E18; Secondary 14J17}

\date{}

\begin{abstract}
The paper surveys several results on the topology of the space of arcs
of an algebraic variety and the Nash problem on the 
arc structure of singularities. 
\end{abstract}

\maketitle

\section{Introduction}

In 1968, Nash wrote a paper on the arc structure of singularities of complex algebraic varieties \cite{Nas95}.
While the paper was only published many years later, 
its content was promoted by Hironaka and later by Lejeune-Jalabert, and it is thanks
to them that the mathematical community came to know about it.

In that paper, the space of arcs of an algebraic variety is regarded for the first time
as the subject of investigation in its own right.
Other papers which appeared in those years and provide insight to the topological structure 
of spaces of arcs are \cite{Gre66,Kol73}.
Since then, spaces of arcs have become a central object of study by algebraic geometers. 
They provide the underlying space in motivic integration, where arcs take the role of 
the $p$-adic integers in $p$-adic integration \cite{Kon95,DL99,Bat99}. 
The relationship between constructible sets in arc spaces
and invariants of singularities in the minimal model program
has yielded important applications in birational geometry (e.g., \cite{Mus01,Mus02,EMY03,EM04}).

Nash viewed the space of arcs
as a tool to study singularities of complex algebraic varieties, and
for this reason he focused on the set of arcs on a variety that
originate from the singular points. 
Nash realized that there is a close connection between the families of arcs
through the singularities and certain data associated
with resolutions of singularities whose existence had just been 
established a few years earlier \cite{Hir64}.
He gave a precise formulation predicting that such families of arcs 
should correspond to those exceptional divisors that are ``essential'' 
for all resolutions of singularities. Establishing this correspondence 
became known as the {\it Nash problem}.

The Nash problem has remained wide open until recently and is still not completely understood. 
This note focuses on this problem and the progress made around it in recent years.
The correspondence proposed by Nash has been shown to hold in dimension two \cite{FdBPP12}
and to fail, in general, in all higher dimensions \cite{IK03,dF13,JK13}. 
However, this should not be viewed as the end of the 
story, but rather as an indication of the difficulty of the problem. Partial results
have been obtained in higher dimensions (e.g., \cite{IK03,dFD16}), and a complete solution
of the problem will only be reached once the correct formulation is found.

\begin{acknowledgments}
We thank Roi Docampo, Javier Fern\'andez de Bobadilla, Shihoko Ishii, J\'anos Koll\'ar,
Ana Reguera, and Wim Veys for many valuable comments and suggestions. 
We would like to thank the referees for their careful reading of the paper
and for their corrections and valuable remarks.
\end{acknowledgments}

\section{The space of arcs}

We begin this section with a quick overview of the definition of arc space, 
referring to more in depth references such as \cite{DL99,Voj07,EM09} for further details. 

Let $X$ be a scheme of finite type over a field $k$. 
The space of arcs $X_\infty$ of $X$ is a scheme whose $K$-valued
points, for any field extension $K/k$, are formal arcs
\[
\a \colon \Spec K[[t]] \to X.
\]
It is constructed as the inverse limit of the jet schemes $X_m$ of $X$, 
which parameterize jets $\g \colon \Spec K[t]/(t^{m+1}) \to X$.
These schemes are defined as follows. 

\begin{definition}
For every $m \in \N$, the \emph{$m$-th jet scheme} $X_m$ of $X$
is the scheme representing the functor that takes a $k$-algebra $A$ to
the set of $A$-valued $m$-jets
\[
X(A[t]/(t^{m+1})) := \Hom_k(\Spec A[t]/(t^{m+1}),X). 
\]
\end{definition}

For every $p \ge m$ there is a natural projection map $X_{p} \to X_m$ 
induced by the truncation homorphism $A[t]/(t^{p+1}) \to A[t]/(t^{m+1})$. 
These projections are affine, and hence one can take the inverse limit
of the corresponding projective system in the category of schemes.

\begin{definition}
The \emph{space of arcs} (or \emph{arc space}) of $X$ is the inverse limit
\[
X_\infty := \liminv X_m. 
\]
\end{definition}

The next property is not a formal consequence of the definition, and 
the proof uses methods of derived algebraic geometry.
We do not know if there is a more direct proof. 

\begin{theorem}[Bhatt \protect{\cite[Corollary~1.2]{Bha}}]
The arc space $X_\infty$ represents the functor that takes a $k$-algebra $A$ to
the set of $A$-valued arcs
\[
X(A[[t]]) := \Hom_k(\Spec A[[t]],X). 
\]
\end{theorem}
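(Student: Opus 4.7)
The plan is to prove that the natural restriction map
\[
\alpha \colon X(A[[t]]) \to \liminv X(A[t]/(t^{m+1}))
\]
is a bijection; the right-hand side is $\liminv X_m(A) = X_\infty(A)$ by the universal property of the inverse limit, so this yields the representability claim.

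The affine case is automatic. If $X = \Spec R$ with $R$ a finitely generated $k$-algebra, then $X(B) = \Hom_{k\text{-alg}}(R, B)$, and the bijectivity of $\alpha$ is just the identity $\Hom(R, \liminv A[t]/(t^{m+1})) = \liminv \Hom(R, A[t]/(t^{m+1}))$ together with $A[[t]] = \liminv A[t]/(t^{m+1})$. For general $X$ I would try to reduce to the affine case by covering $X$ with affine opens $\{U_i\}$ and localizing on $\Spec A$. The basic topological input is that $\Spec A[[t]]$ admits no nonempty closed subscheme disjoint from the special fiber $V(t) \cong \Spec A$: if $J \subset A[[t]]$ satisfies $J + (t) = A[[t]]$, then $J$ contains an element of the form $1 - tc$, and any such element is a unit in $A[[t]]$, so $J = A[[t]]$. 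It follows that if one picks $f_i \in A$ so that $\Spec A = \bigcup D(f_i)$ and so that the $0$-jet of a given compatible system $(\g_m)$ sends each $D(f_i)$ into $U_i$, then the same $\{f_i\}$ provide a Zariski cover $\Spec A[[t]] = \bigcup D(f_i)$. Injectivity of $\alpha$ follows by comparing two arcs with identical jets locally over each $U_i$, reducing to the affine case.

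The hard part is surjectivity. Given compatible jets $(\g_m)$, the affine case produces, for each $i$, a $k$-algebra map $R_i \to A[1/f_i][[t]]$, hence an arc $\Spec A[1/f_i][[t]] \to U_i$. But these do not immediately glue to a map $\Spec A[[t]] \to X$, because localization does not commute with $(t)$-adic completion: the ring $A[[t]][1/f_i]$ is in general strictly smaller than $A[1/f_i][[t]]$, so the arcs one produces do not live on the Zariski cover $\{D(f_i)\}$ of $\Spec A[[t]]$ in the naive sense. Bridging this gap is the essential difficulty, and appears to be where the finite-type hypothesis on $X$ enters. I would expect the argument to proceed via a Tannaka/formal-GAGA type statement identifying maps from the $(t)$-adically complete ring $A[[t]]$ into a finite-type scheme $X$ with compatible families of maps from its truncations; this is the content of the Bhatt-style theorem invoked by the paper, and I do not see a more elementary substitute.
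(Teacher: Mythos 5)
Your proposal is not a complete proof: the crucial step --- surjectivity of $\alpha$ for non-affine $X$ --- is exactly the point at which you stop and appeal to ``a Tannaka/formal-GAGA type statement,'' which is the theorem you were asked to prove. That said, you should know that the paper itself offers no proof either: it cites Bhatt's result and explicitly remarks that the property ``is not a formal consequence of the definition,'' that the known proof ``uses methods of derived algebraic geometry,'' and that the author does not know a more direct argument. So your honest assessment of where the difficulty lies is exactly right, and your identification of the obstruction --- that localization does not commute with $(t)$-adic completion, so the affine-local arcs live on $\Spec A[1/f_i][[t]]$ rather than on the Zariski open $\Spec A[[t]][1/f_i]$, and hence do not glue to a map out of $\Spec A[[t]]$ --- is precisely the standard explanation of why the statement is nontrivial. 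Your treatment of the affine case and of the fact that a cover of $\Spec A$ induces a cover of $\Spec A[[t]]$ is correct.

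Two smaller cautions. First, even your reduction steps have gaps: to run the injectivity argument you implicitly need that an arc $\gamma$ with $\gamma_0(D(f_i))\subset U_i$ satisfies $\gamma(D(f_i))\subset U_i$, and your lemma about ideals $J$ with $J+(t)=A[[t]]$ does not immediately give this, since a prime $\fq$ of $A[[t]]$ avoiding $f_i$ need not specialize to a point of $V(t)\cap D(f_i)$; this requires an additional argument. Second, be careful with the phrase ``the affine case is automatic'': it is automatic only because $X(B)=\Hom_{k\text{-alg}}(R,B)$ for \emph{all} rings $B$ when $X$ is affine, and the failure of this functorial description for general $X$ is the entire content of the theorem. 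If you want to go further, the actual proof (Bhatt, building on Bhalotia--Halpern-Leistner--Lurie-style Tannaka duality) identifies $\Hom(\Spec A[[t]],X)$ with $\Hom(\colim_m \Spec A[t]/(t^{m+1}),X)$ by showing that for $X$ of finite type (more generally quasi-compact quasi-separated with affine diagonal) maps out of $\Spec$ of a complete ring are detected on the formal colimit of its truncations; there is no known elementary substitute, consistent with your conclusion.
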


The truncations $A[[t]] \to A[t]/(t^{m+1})$ induce
projection maps $\p_{X,m} \colon X_\infty \to X_m$. Taking $m=0$, we obtain the projection
\[
\p_X \colon X_\infty \to X
\]
which maps an arc $\a(t) \in X_\infty(K)$ to the point $\a(0) \in X(K)$ 
where the arc stems from.\footnote{The notation $\a(t)$ refers to the fact that, 
in local parameters of $X$ at $\a(0)$, 
the arc is given by formal power series in $t$.}

\begin{remark}
Arc spaces are closely connected to valuation theory (cf.\ \cite[Theorem~1.10]{Reg95}; see also \cite{Ple05}). 
For any given field extension $K/k$, an arc $\a \colon \Spec K[[t]] \to X$ defines a valuation
\[
\val_\a \colon \O_{X,\a(0)} \to \N \cup \{\infty\}.
\]
given by $\val_\a(h) := \ord_t(\a^\sharp h)$. 
Here $0$ denotes the closed point of $\Spec K[[t]]$. We will denote by $\e$ the generic point. 
If $X$ is a variety and $\a(\e)$ is the generic point of $X$, then $\val_\a$ extends to a valuation
\[
\val_\a \colon k(X)^* \to \Z.
\]
For every irreducible closed set $C \subset X_\infty$,
we denote by $\val_C$ the valuation defined by the generic point of $C$. 
We will use several times the fact that 
if an arc $\a$ is a specialization of another arc $\b$, then $\val_\a(h) \ge \val_\b(h)$
for every $h \in \O_{X,\a(0)}$, which can be easily seen
by observing that, writing $\a^\sharp h = \sum a_it^i$ and $\b^\sharp h = \sum b_it^i$, 
each coefficient $a_i$ is a specialization of the corresponding coefficient $b_i$.
\end{remark}

The space of arcs of an affine space $\A^n$ is easy to describe.
For every $i \ge 0$, we introduce $n$-ples of variables 
$x_1^{(i)},\dots,x_n^{(i)}$. We identify $x_j = x_j^{(0)}$ and
write for short $x_j' = x_j^{(1)}$ and $x_j'' = x_j^{(2)}$. 
The arc space of $\A^n$ is the infinite dimensional affine space
\[
(\A^n)_\infty = \Spec
k[x_j,x_j',x_j'',\dots]_{1 \le j \le n},
\]
where a $K$-valued point $(a_j,a_j', a_j'',\dots)_{1 \le j \le n}$ 
corresponds to the $K$-valued arc $\a(t)$ with components
$x_j(t) = a_j + a_j't + a_j''t^2 + \dots $.

If $X$ is an affine scheme, defined by equations $f_i(x_1,\dots,x_n) = 0$
in an affine space $\A^n$, then $X_\infty$ parameterizes $n$-ples of 
formal power series $(x_1(t),\dots,x_n(t))$ subject to the conditions
$f_i(x_1(t),\dots,x_n(t)) = 0$ for all $i$. These conditions describe $X_\infty$ 
as a subscheme in an infinite dimensional affine space
defined by infinitely many equations in infinitely many variables.
The equations of $X_\infty$ in $(\A^n)_\infty$ can be generated using 
Hasse--Schmidt derivations \cite{Voj07}. 
There is a sequence $(D_0,D_1,D_2,\dots)$ of $k$-linear maps 
\[
D_i \colon k[x_1,\dots,x_n] \to k[x_j,x_j',x_j'',\dots]_{1 \le j \le n}
\]
uniquely determined by the conditions 
\[
D_i(x_j) = x_j^{(i)}, \quad D_k(fg) = \sum_{i+j = k} D_i(f)D_j(g),
\]
and the ideal of $X_\infty$ in $(\A^n)_\infty$ is 
generated by all the derivations $D_i(f_j)$, $i \ge 0$, of
a set of generators $f_j$ of the ideal of $X$ in $\A^n$. 

The arc space of an arbitrary scheme $X$ can be glued together, scheme theoretically, 
from the arc spaces of its affine charts. 
The Zariski topology of the arc space agrees with the inverse limit topology. 
Excluding of course the trivial case where $X$ is zero dimensional, $X_\infty$ 
is not Noetherian and is not a scheme of finite type. Yet, some finiteness is built into it. 

\medskip

We henceforth assume the following:
\begin{quote}
\emph{$X$ is a variety defined over an algebraically closed field $k$ of characteristic zero.}
\end{quote}
We will be working in this setting throughout the paper until the
last section where varieties over fields of positive characteristics will be considered. 

If $X$ is a smooth $n$-dimensional variety, then each jet scheme $X_m$ is smooth
and $X_\infty$ is the inverse limit of a system of
locally trivial $\A^n$ fibrations $X_{m+1} \to X_m$.
This can be seen by reduction to the case of an affine space using Noether normalization, 
or equivalently by Hensel's lemma. 
It follows in this case that $X_\infty$ is an integral scheme
and the projections $X_\infty \to X_m$ are surjective. 
Furthermore, $\p_X^{-1}(S)$ is irreducible for any irreducible set $S \subset X$. 

\begin{remark}
The first jet scheme $X_1$ of a smooth variety $X$ is the same as the tangent bundle of $X$.
However, for any $m \ge 1$ the fibration $X_{m+1} \to X_m$ does not have a natural 
structure of vector bundle. For example, the nonlinear change of coordinates
$(u,v) = (x+y^2,y)$ on $X = \Spec k[x,y]$ induces the affine change of
coordinates $(u'',v'') = (x'' + b^2,y'')$ on 
the fiber of $X_2 \to X_1$ over a point $(0,0,a,b) \in X_1 = \Spec [x,y,x',y']$.
In general, for every $m$ there is a natural section $X \to X_m$ which takes a point of $X$
to the constant $m$-jet through that point, but there
is no natural section $X_m \to X_p$ for $p > m > 0$. 
\end{remark}

If $X$ is singular, then the jet schemes $X_m$ can have 
several irreducible components and non-reduced structure, 
the maps $X_{m+1} \to X_m$ fail to be surjective, there are jumps in their fiber dimensions, 
and the inverse image $\p_X^{-1}(S)$ of an 
irreducible set $S \subset X$ may fail to be irreducible. 
These pathologies make the study of $X_\infty$ a difficult task. 

The systematic study of the space of arcs 
began in the sixties through the works of Greenberg, Nash, and Kolchin. 
In this context, Greenberg's approximation theorem gives the following property. 

\begin{theorem}[Greenberg \protect{\cite[Theorem~1]{Gre66}}]
\label{t:Gre}
For any system of polynomials $f_1,\dots,f_r \in k[x_1,\dots,x_n]$
there are numbers $N, c \ge 1$ and $s \ge 0$ 
such that for any $m \ge N$ and every $x(t) \in k[[t]]^n$ such that 
$f_i(x(t)) \equiv 0 \Mod{t^m}$, 
there exists $y(t) \in k[[t]]^n$ such that 
$y(t) \equiv x(t) \Mod{t^{[m/c]-s}}$ and $f_i(y(t)) = 0$.
\end{theorem}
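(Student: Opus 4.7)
The plan is to combine a quantitative form of Hensel's lemma with Noetherian induction on the scheme $X := V(f_1,\ldots,f_r) \subset \A^n$. The guiding principle is that failure of Hensel to correct an approximate arc $x(t)$ to an exact arc is controlled by the order of vanishing of the Jacobian minors along $x(t)$; when this order is large, $x(t)$ must itself approximate an arc on the proper closed subscheme cut out by those minors, at which point one induces on $\dim X$.

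The analytic input is the following quantitative Hensel lemma (essentially Tougeron's implicit function theorem in the power series setting). Suppose $x(t) \in k[[t]]^n$ satisfies $f_i(x(t)) \equiv 0 \Mod{t^m}$ for all $i$, and that some $q \times q$ minor $\Delta$ of the Jacobian matrix $(\partial f_i/\partial x_j)$ has $\ord_t \Delta(x(t)) = e$, where $q$ is the codimension in $\A^n$ of the component of $X$ through $x(0)$. If $m \geq 2e+1$, then there exists $y(t) \in k[[t]]^n$ with $f_i(y(t)) = 0$ and $y(t) \equiv x(t) \Mod{t^{m-e}}$. I would prove this by a Newton iteration: treating the variables complementary to the minor as parameters, the remaining $q$ variables can be solved by successive approximation, the contraction rate being governed by $\Delta(x(t))^2$.

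For the main argument, I reduce first to the case where $X$ is reduced and equidimensional, and induct on $\dim X$. Let $X' \subsetneq X$ be the proper closed subscheme defined by $f_1, \ldots, f_r$ together with all $q \times q$ minors of the Jacobian; set-theoretically $X'$ contains the singular locus of $X$. Given $x(t)$ with $f_i(x(t)) \equiv 0 \Mod{t^m}$, put $e := \min_{\Delta} \ord_t \Delta(x(t))$. If $e \leq \lfloor(m-1)/2\rfloor$, the quantitative Hensel lemma produces $y(t)$ with $y(t) \equiv x(t) \Mod{t^{\lceil(m+1)/2\rceil}}$. Otherwise every $q \times q$ minor vanishes to order at least $\lceil m/2 \rceil$ at $x(t)$, so $x(t)$ is an approximate arc on $X'$ modulo $t^{\lceil m/2\rceil}$; the inductive hypothesis applied to $X'$ with constants $N', c', s'$ then yields $y(t) \in X'(k[[t]]) \subset X(k[[t]])$ with $y(t) \equiv x(t) \Mod{t^{\lfloor\lceil m/2\rceil/c'\rfloor - s'}}$. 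Combining the two cases and recursing down to the base case (either $X$ smooth, where $e \equiv 0$ so Hensel applies directly with $c = 1$, or $\dim X = 0$, which is trivial) yields constants of the form $c = 2^{\dim X}$ together with $s,N$ depending explicitly on the $f_i$.

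The main obstacle lies in the codimension bookkeeping of the Hensel step: minors of the correct size $q$ must be selected with $q$ matching the local codimension of $X$ at $x(0)$, which can vary with $x(0)$, and one has to handle arcs whose origin sits on an intersection of components of distinct dimensions. A secondary delicate point is verifying that the inductive constants $N', c', s'$ depend only on the polynomials defining $X'$ (which are themselves determined by the $f_i$), so that the induction terminates with universally bounded constants after at most $\dim X$ steps, uniformly in $x(t)$.
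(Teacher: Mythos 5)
The paper does not prove this theorem; it is quoted directly from Greenberg's article \cite{Gre66}, so there is no internal argument to compare yours against. That said, your strategy --- a quantitative Newton/Tougeron lemma on the stratum where some maximal minor of the Jacobian has controlled order, combined with induction on the proper closed subscheme where all such minors vanish to high order --- is essentially Greenberg's original argument (and the template for Artin--Elkik approximation generally), and the shape of the constants you extract ($c$ doubling at each inductive step, $N$ and $s$ determined by the defining polynomials of the successive strata) matches what that induction produces. The reductions you wave at (passing to the radical, then to the individual minimal primes, at the cost of bounded factors in $c$ and $s$) are standard and do work; the order-of-vanishing bookkeeping across components of different dimensions is handled by noting that if $x(t)$ kills the radical to order $m$ then it kills at least one minimal prime to order $m/J$, $J$ the number of components.

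There is, however, one genuine gap, and it sits exactly in your quantitative Hensel lemma as stated. Newton iteration on the $q$ equations selected by the minor $\Delta$ (solving for the $q$ variables indexing its columns, with the rest as parameters) produces $y(t)$ satisfying \emph{only those $q$ equations}; for the remaining $f_i$ you get $f_i(y(t)) \equiv f_i(x(t)) \equiv 0 \Mod{t^{m-e}}$, which is an approximation, not exact vanishing. The complete intersection cut out by the selected equations has excess components beyond $V(\sqrt{(f_1,\dots,f_r)})$, and nothing in the iteration prevents $y(t)$ from landing on one of them. The classical repair is to enlarge the ``bad'' ideal defining $X'$: for each subset $J$ of $q$ equations one includes not only the minors of the Jacobian of $f_J$ but also generators of the colon ideal $\big((f_J) : I\big)$, where $I$ is the (prime, after reduction) ideal of the component in question. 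In the good case one then has a function $a \in ((f_J):I)$ with $\ord_t a(x(t))$ bounded, whence $a(y)f_i(y) \in (f_J)(y) = 0$ forces $f_i(y)=0$; in the bad case the vanishing of these extra generators to high order is part of what sends $x(t)$ into the next stratum of the induction. Without this device the inductive step does not close. The rest of your outline, including termination of the induction in at most $\dim X$ steps and the uniformity of the constants, is fine once this is incorporated.
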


The image of $X_\infty$ in $X_m$ is the intersection of the images of
the jet schemes $X_p$ for $p \ge m$, which form a nested sequence of constructible sets.
The content of Greenberg's theorem is that the sequence stabilizes, which means that
the image of $X_\infty$ agrees with the image of $X_p$ for $p \gg m$. 
It follows in particular that
for every $m$ the image of $X_\infty$ in $X_m$ is constructible.
This can be viewed as the first structural result on arc spaces.

Following the terminology of \cite[Definition~(9.1.2)]{EGAiii_1}, a \emph{constructible} set in $X_\infty$
is, by definition, 
a finite union of finite intersections of retrocompact open sets and their 
complements, where a subset $Z \subset X_\infty$ is said to be \emph{retrocompact}
if for every quasi-compact open set $U \subset X_\infty$, the intersection $Z \cap U$
is quasi-compact.\footnote{According to this definition, 
a closed subset of $X_\infty$ needs not be constructible. For instance if $Z \subset X$ is a proper closed
subscheme, then $Z_\infty$ is closed in $X_\infty$ but is not constructible.}
This means that a subset $C \subset X_\infty$ is constructible if and only if
it is the (reduced) inverse image of a constructible set on some finite level $X_m$
\cite[Th\'eor\`eme~(8.3.11)]{EGAiv_3}.
Such sets are nowadays commonly called \emph{cylinders}. 
Theorem~\ref{t:Gre} implies the following property.\footnote{The property 
can also be viewed as a consequence of Pas' quantifier elimination theorem \cite{Pas89}.}

\begin{corollary}
The image at any finite level $X_m$ of a constructible subset of $X_\infty$ is constructible. 
\end{corollary}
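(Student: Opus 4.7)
The plan is to reduce the statement to Theorem~\ref{t:Gre} combined with Chevalley's constructibility theorem applied to the truncation morphisms between jet schemes, which are morphisms of finite type.

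First I would unwind the definition of constructibility in $X_\infty$: a constructible subset $C \subset X_\infty$ is the inverse image under some truncation $\p_{X,p} \colon X_\infty \to X_p$ of a constructible set $B \subset X_p$. By replacing $p$ with $\max(p,m)$ and $B$ with its inverse image in $X_{\max(p,m)}$ (a constructible set, as the pull-back under the affine morphism $X_{\max(p,m)} \to X_p$), I may assume $p \ge m$. Writing $\p_{p,m} \colon X_p \to X_m$ for the truncation, so that $\p_{X,m} = \p_{p,m} \circ \p_{X,p}$, the key identity is
\[
\p_{X,m}(C) \;=\; \p_{p,m}\bigl(\p_{X,p}(C)\bigr) \;=\; \p_{p,m}\bigl(B \cap \p_{X,p}(X_\infty)\bigr).
\]

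Next I would argue that $\p_{X,p}(X_\infty)$ is constructible in $X_p$. This is exactly the content of Greenberg's theorem as explained just after Theorem~\ref{t:Gre}: the nested sequence $\{\p_{q,p}(X_q)\}_{q \ge p}$ of images of the jet schemes stabilizes, so $\p_{X,p}(X_\infty) = \p_{q,p}(X_q)$ for some $q \gg p$. Since $\p_{q,p} \colon X_q \to X_p$ is a morphism between schemes of finite type over $k$, Chevalley's theorem guarantees that $\p_{q,p}(X_q)$ is constructible. Intersecting with the constructible set $B$, the subset $B \cap \p_{X,p}(X_\infty) \subset X_p$ is constructible.

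Finally, applying Chevalley's theorem once more to the finite-type morphism $\p_{p,m} \colon X_p \to X_m$, the image $\p_{p,m}\bigl(B \cap \p_{X,p}(X_\infty)\bigr) = \p_{X,m}(C)$ is constructible in $X_m$, as desired. No step poses a serious obstacle once Greenberg's theorem has been proved; the only care required is the passage from the infinite-level space to a sufficiently high finite level, which is precisely what Theorem~\ref{t:Gre} provides, after which everything reduces to standard Chevalley constructibility on finite-type schemes.
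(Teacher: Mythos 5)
Your argument is correct and is exactly the intended one: the paper leaves the corollary as an immediate consequence of Theorem~\ref{t:Gre} together with the cylinder description of constructible sets, and your proof fills in precisely those steps (reduce to a cylinder over a level $p \ge m$, use Greenberg's stabilization to see that $\p_{X,p}(X_\infty)$ is the constructible image of some $X_q$, then apply Chevalley to the finite-type truncations). No gaps.
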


The second theorem we want to review is Kolchin's irreducibility theorem. 
Since the proof goes in the direction of the main focus of this paper, we outline it. 
The proof given here, which is taken from \cite{EM09}, is
different from the original proof of Kolchin. 
Other proofs of this property can be found in \cite{Gil02,IK03,NS10}. 

\begin{theorem}[Kolchin \protect{\cite[Chapter~IV, Proposition~10]{Kol73}}]
\label{t:Kolchin}
The arc space $X_\infty$ of a variety $X$ is irreducible. 
\end{theorem}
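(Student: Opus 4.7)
The plan is to reduce to the smooth case via resolution of singularities. Since $k$ has characteristic zero, Hironaka's theorem provides a proper birational morphism $f \colon Y \to X$ with $Y$ smooth, inducing a continuous map $f_\infty \colon Y_\infty \to X_\infty$ of arc spaces. As already noted above, $Y_\infty$ is irreducible because $Y$ is smooth. Since continuous images of irreducible spaces and closures of irreducible subsets are irreducible, it suffices to show that $f_\infty(Y_\infty)$ is dense in $X_\infty$.

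Set $W := X_{\sing}$ and $E := f^{-1}(W)_{\red}$, a proper closed subset of $Y$. The first step is to lift arcs from $X$ to $Y$. Given $\a \colon \Spec K[[t]] \to X$, choose any preimage in $Y$ of the image of the generic point $\a(\e)$ (possibly after a finite residue field extension $K \subset K'$, which does not affect closures in the Zariski topology); the valuative criterion of properness, applied to the discrete valuation ring $K'[[t]]$, then produces a lift $\b \colon \Spec K'[[t]] \to Y$ with $f \circ \b = \a$. When $\a \notin W_\infty$, this lift is unique and lies in $Y_\infty \setminus E_\infty$, so $\a$ already belongs to $f_\infty(Y_\infty)$.

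It remains to handle $\a \in W_\infty$, for which $\b$ lands in $E_\infty$. Because $E \subsetneq Y$, the constant arcs at points of $Y \setminus E$ exhibit $E_\infty$ as a proper closed subset of $Y_\infty$; by the irreducibility of $Y_\infty$, its complement is dense. Hence $\b$ lies in the closure of $Y_\infty \setminus E_\infty$, and continuity of $f_\infty$ gives
\[
\a = f_\infty(\b) \in \overline{f_\infty(Y_\infty \setminus E_\infty)} \subset \overline{f_\infty(Y_\infty)},
\]
completing the density argument and hence the proof.

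The hardest conceptual input is Hironaka's resolution, which is where the characteristic zero hypothesis enters; the rest of the proof is essentially a continuity-plus-valuative-criterion argument. The main technical subtlety inside the proof is the lifting step for arcs entirely contained in $W$: one must allow a finite residue field extension to find a preimage of $\a(\e)$, and then check that such an extension does not spoil the density conclusion, which is the case because the Zariski closure of the image of $Y_\infty \to X_\infty$ is computed functorially from all field-valued points.
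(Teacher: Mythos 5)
Your overall strategy — resolve singularities, use irreducibility of $Y_\infty$, and show that $f_\infty$ is dominant — is the same as the paper's, and your treatment of arcs not contained in $W_\infty$ is correct. The gap is in your lifting step for arcs $\a \in W_\infty$: the lift $\b$ you invoke does not exist in general, even after a finite extension of the residue field $K$. To apply the valuative criterion one needs a morphism $\Spec K'((t)) \to Y$ over $\a(\e)$, i.e.\ a $K((t))$-algebra embedding into $K'((t))$ of the residue field of some point of the fiber $Y \times_X \Spec K((t))$; when $\a$ is entirely contained in $X_\sing$ this residue field can be a \emph{ramified} extension of $K((t))$, which embeds into $K'((t))$ for no field extension $K'/K$. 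Example~\ref{eg:1} of the paper is exactly such a counterexample: for the Whitney umbrella $xy^2=z^2$ with normalization $f \colon Y \to X$, the arc $\a=(t,0,0)$ lies in $(X_\sing)_\infty$, the map $E \to X_\sing$ is two-to-one and ramified at the origin, and lifting $\a$ would require a square root of $t$ in $K'[[t]]$; hence $\a \notin f_\infty(Y_\infty)$ no matter how $K$ is enlarged. (Note also that if $\b$ did exist you would have $\a \in f_\infty(Y_\infty)$ outright, so your subsequent closure argument via density of $Y_\infty \setminus E_\infty$ does no work: it moves $\b$ inside $Y_\infty$, whereas what is needed is to move $\a$ inside $W_\infty$ toward liftable arcs.)

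The missing idea is how to show that arcs contained in $W=X_\sing$ still lie in $\ov{f_\infty(Y_\infty)}$ without lifting them individually. The paper decomposes $W = \bigcup Z_i$ into irreducible components, uses \emph{induction on dimension} to know that each $(Z_i)_\infty$ is irreducible (so that any dense open $U_i \subset Z_i$ has $(U_i)_\infty$ dense in $(Z_i)_\infty$), and uses \emph{generic smoothness} to produce dense opens $U_i \subset Z_i$ and opens $V_i \subset E$ with $V_i \to U_i$ smooth; arcs on $U_i$ then do lift, by formal smoothness, so $(U_i)_\infty \subset f_\infty(Y_\infty)$ and hence $(Z_i)_\infty \subset \ov{f_\infty(Y_\infty)}$. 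Some such density argument \emph{inside} $W_\infty$ is unavoidable, and it is where both the induction and the characteristic-zero hypothesis (through generic smoothness) genuinely enter.
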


\begin{proof}
Let $f \colon Y \to X$ be a resolution of singularities. Let $Z \subset X$ be the indeterminacy
locus of $f^{-1}$ and $E = f^{-1}(Z)_\red$ be the exceptional locus of $f$. Since $Y$ is smooth, 
its arc space $Y_\infty$ is irreducible. It is therefore sufficient to show that 
the induced map 
\[
f_\infty \colon Y_\infty \to X_\infty
\]
is dominant. Since $f$ is an isomorphism in $f^{-1}(X\setminus Z)$, 
the valuative criterion of properness implies that every arc $\a$ on $X$
that is not entirely contained in $Z$ lifts to $Y$. 
If $Z = \bigcup Z_i$ is the decomposition into irreducible components, then
$Z_\infty = \bigcup (Z_i)_\infty$, set theoretically. By induction on dimension, each
$(Z_i)_\infty$ is irreducible, and therefore if $U_i \subset Z_i$ is a dense open subset then
$(U_i)_\infty$ is dense in $(Z_i)_\infty$. By generic smoothness, we can find 
a dense open subset $U_i \subset Z_i$ and an open set $V_i \subset E$ such that $f$ 
restricts to a smooth map $V_i \to U_i$. Every arc on $U_i$ lifts to $V_i$, and hence
$(U_i)_\infty \subset f_\infty((V_i)_\infty)$. Therefore each $(Z_i)_\infty$
is in the closure of $f_\infty(Y_\infty)$. This shows that $f_\infty$ is dominant, 
and the theorem follows. 
\end{proof}

The next example shows that $f_\infty$ needs not be surjective. 

\begin{example}
\label{eg:1}
Let $X \subset \A^3$ be the Withney umbrella, defined by the equation $xy^2 = z^2$. 
Its singular locus $X_\sing$ is the $x$-axis, and the normalization $Y \to X$
gives a resolution of singularities. The exceptional divisor $E \subset Y$ maps 
generically two-to-one over $X_\sing$ with ramification at the origin. 
It follows that for every power series $x(t) \in k[[t]]$ with $\ord_t(x(t)) = 1$,  
the arc $\a = (x(t),0,0) \in X_\infty$, which is a smooth arc on $X_\sing$
passing through the origin, cannot lift to $E$ and hence is not in $f_\infty(Y_\infty)$. 
\end{example}

More information on the structure of the arc space of a singular variety $X$ can be 
obtained by a careful analysis of the truncation maps $X_m \to X_n$, defined for $m > n$, 
and the maps $f_m \colon Y_m \to X_m$ induced by a resolution
of singularities $f \colon Y \to X$. Understanding these maps is a delicate but rewarding task. 
Both sets of maps were studied by Denef and Loeser in connection to motivic integration \cite{DL99}, 
and their description plays a key role in relating the geometry of arc spaces 
to invariants of singularities in the minimal model program.

A consequence of one of the results of \cite{DL99} 
is that images of many constructible sets in the arc space $Y_\infty$ of a resolution $Y \to X$
are not far from being constructible in $X_\infty$. 

\begin{theorem}
\label{t:f-infty-constr}
Let $f \colon Y \to X$ be a resolution of singularities 
and let $C \subset Y_\infty$ be a constructible set.
Assume that none of the irreducible components of $\ov C$
is contained in the arc space of the exceptional locus $\Ex(f)$ of $f$. 
Then there is a constructible set $D \subset X_\infty$ such that
$D \subset f_\infty(C) \subset \ov D$. 
\end{theorem}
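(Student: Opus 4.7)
The plan is to combine a stratification of $C$ by the order along the Jacobian ideal $\Jac_f$ of $f$ with the Denef--Loeser change of variables formula \cite{DL99}. For each $i \in \N$ set $Y_\infty^{(i)} := \{\gamma \in Y_\infty : \ord_\gamma \Jac_f = i\}$, a cylinder of level $i$. Write $C = \pi_{Y,m_0}^{-1}(T_{m_0})$; then $\ov C = \pi_{Y,m_0}^{-1}(\ov{T_{m_0}})$ has finitely many irreducible components $Z_1,\dots,Z_r$ whose generic points $\eta_1,\dots,\eta_r$ lie in $C$. Because $\ord_\gamma \Jac_f = \infty$ if and only if $\gamma \in \Ex(f)_\infty$, the hypothesis gives $e_j := \ord_{\eta_j}\Jac_f < \infty$ for every $j$. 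Set $e := \max_j e_j$ and $C' := C \cap \{\gamma : \ord_\gamma \Jac_f \le e\}$, a sub-cylinder of $C$ still containing every $\eta_j$. Stratify $C' = \bigsqcup_{i=0}^e C_i$ with $C_i := C' \cap Y_\infty^{(i)}$, each a cylinder.

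Fix $m \ge \max(2e,\, m_0 + e)$, so that $m \ge 2e$ and $m - i \ge m_0$ for every $i \le e$. By the Denef--Loeser change of variables formula, setting $Y_m^{(i)} := \pi_{Y,m}(Y_\infty^{(i)})$ and $X_m^{(i)} := f_m(Y_m^{(i)})$, the restriction $f_m \colon Y_m^{(i)} \to X_m^{(i)}$ is a piecewise trivial $\A^i$-fibration onto a constructible subset of $X_m$. I will also use the refinement, implicit in the proof of that formula, that two $m$-jets in a common $f_m$-fiber within $Y_m^{(i)}$ must agree modulo the truncation $Y_m \to Y_{m-i}$, so the $\A^i$-fibers are concentrated in the top $i$ levels. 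Define
\[
D_i := \pi_{X,m}^{-1}\bigl(f_m(\pi_{Y,m}(C_i))\bigr), \qquad D := \bigsqcup_{i=0}^e D_i,
\]
a constructible subset of $X_\infty$.

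For $D \subset f_\infty(C)$: given $\alpha \in D_i$, choose $\gamma_m \in \pi_{Y,m}(C_i)$ with $f_m(\gamma_m) = \pi_{X,m}(\alpha)$; Denef--Loeser lifting produces a unique $\beta \in Y_\infty^{(i)}$ with $f_\infty(\beta) = \alpha$, and $\pi_{Y,m}(\beta)$ lies in the same $f_m$-fiber as $\gamma_m$. The fiber fine structure together with $m - i \ge m_0$ forces $\pi_{Y,m_0}(\beta) = \pi_{m,m_0}(\gamma_m) \in T_{m_0}$, so $\beta \in C_i$ and $\alpha \in f_\infty(C)$. For $f_\infty(C) \subset \ov D$: any $\gamma \in C$ lies in some $Z_j$ and specializes from $\eta_j \in C'$, so by continuity $f_\infty(\gamma)$ specializes from $f_\infty(\eta_j) \in D$ and hence lies in $\ov D$. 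The main technical obstacle is the fiber fine-structure claim — that the $\A^i$-fibers of $f_m$ inside $Y_m^{(i)}$ project to single points under $Y_m \to Y_{m-i}$ — which is the Jacobian-controlled infinitesimal matching statement that must be extracted carefully from \cite{DL99}.
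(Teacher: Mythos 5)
Your argument is correct and follows essentially the same route as the paper's proof: both rest on \cite[Lemma~3.4]{DL99} (the $\A^e$-fibration structure of $f_m$ on the locus where $\ord\Jac_f = e$, with fibers contained in fibers of $Y_m \to Y_{m-e}$, for $m \ge 2e$) and both take $D$ to be the cylinder $\p_{X,m}^{-1}(f_m(\p_{Y,m}(\cdot)))$ over a suitable piece of $C$. The only difference is organizational — the paper reduces to $\ov C$ irreducible and passes to a dense open subset of $C$ on which $\ord\Jac_f$ is constant, whereas you stratify $C$ by the exact value of $\ord\Jac_f$ — and the ``fiber fine-structure'' claim you flag as the main technical obstacle is precisely the statement of the cited lemma, used identically in the paper.
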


\begin{proof}
Without loss of generality, we can assume that $\ov C$ is irreducible. 
Then there is a constructible set $S \subset Y_p$, for some $p \ge 0$, such that 
$\ov S$ is irreducible and $C = \p_{Y,p}^{-1}(S)$,
where $\p_{Y,p} \colon Y_\infty \to Y_p$ is the truncation map. 
Let $\Jac_f :=\Fitt^0(\Om_{Y/X})$ denote the Jacobian ideal sheaf of $f$.   
Since $C \not\subset \Ex(f)_\infty$, we have $e := \val_C(\Jac_f) < \infty$.
By replacing $p$ with a larger integer, we can assume that $p \ge 2e$
and $C  = \p_{Y,p-e}^{-1}(\p_{Y,p-e}(C))$. 

There is a dense relatively open subset $S^\o \subset S$ such that, 
letting $C^\o := \p_{Y,p}^{-1}(S^\o)$ we have $\val_\a(\Jac_f) = e$ for all $\a \in C^\o$. 
Note that $C^\o$ is dense in $C$ and $\p_{Y,m}(C^\o)$ is constructible in $Y_m$ for every $m$.  

By \cite[Lemma~3.4]{DL99} (see also (a') in the proof), 
for every $m \ge p$ the fiber $F$ of $f_m \colon Y_m \to X_m$
through a point of $\p_{Y,m}(C^\o)$ is an affine space of dimension $e$ which
is contained in a fiber of the projection $Y_m \to Y_{m-e}$. 
This implies that $F$ is entirely contained in $\p_{Y,m}(C^\o)$, and therefore we have
\[
f_m^{-1}\big(f_m(\p_{Y,m}(C^\o))\big) = \p_{Y,m}(C^\o).
\]
Using the commutativity of the diagram
\[
\xymatrix{
Y_\infty \ar[r]^{f_\infty}\ar[d]_{\p_{Y,m}} & X_\infty \ar[d]^{\p_{X,m}}\\
Y_m \ar[r]^{f_m} & X_m
}
\]
we see that $f_\infty(C^\o) = \p_{X,m}^{-1}\big(f_m(\p_{Y,m}(C^\o))\big)$.
Note that this is a constructible set in $X_\infty$ since $f_m(\p_{Y,m}(C^\o))$
is constructible in $X_m$. 
As we have 
\[
f_\infty(C^\o) \subset f_\infty\big(\ov{C^\o}\big) \subset \ov{f_\infty(C^\o)}
\]
and $\ov{C^\o} = C$, we can take $D := f_\infty(C^\o)$. 
\end{proof}

\section{Arcs through the singular locus}
\label{s:Nash}

Let $X$ be a variety defined over an algebraically closed field $k$ of characteristic zero. 

The main contribution of \cite{Nas95} is the realization that, on $X$, there are
only finitely many maximal families of arcs through the singularities,
that is to say that the set $\p_X^{-1}(X_\sing) \subset X_\infty$ 
has finitely many irreducible components. 
Moreover, each such family corresponds to a specific component
of the inverse image of $X_\sing$ on a resolution of singularities of $X$. 

This property follows by a variant of the proof of Kolchin's theorem given in the previous section.
We should stress that Nash's result predates Kolchin's theorem. 
The argument goes as follows. 

Let $f \colon Y \to X$ be a resolution of singularities, 
and let 
\[
f^{-1}(X_\sing)_\red = \bigcup_{i \in I} E_i
\] 
be the decomposition into irreducible components.
The set $I$ is finite because $Y$ is Noetherian, and each
$\p_Y^{-1}(E_i)$ is irreducible because $Y$ is smooth. 
Arguing as in the proof of Theorem~\ref{t:Kolchin}, one deduces 
that the map $f_\infty$ restricts to a dominant map 
$\p_Y^{-1}(f^{-1}(X_\sing)) \to \p_X^{-1}(X_\sing)$, 
and therefore there is a finite decomposition into irreducible components
\[
\p_X^{-1}(X_\sing)_\red = \bigcup_{i \in I} C_i, \quad\text{where}\quad 
C_i := \ov{f_\infty(\p_Y^{-1}(E_i))} \subset X_\infty.
\]
Let $J \subset I$ be the set of indices $j$ for which $C_j$ is 
a maximal element of $\{ C_i \}_{i\in I}$, where maximality is intended with respect
to inclusions.

It is convenient at this point to introduce the following notation. 

\begin{definition}
\label{d:max-div-set}
The \emph{maximal divisorial set} associated to a prime divisor $E$ on a resolution $Y$ over $X$
is the set
\[
C_X(E) := \ov{f_\infty(\p_Y^{-1}(E))} \subset X_\infty.
\] 
\end{definition}

\begin{remark}
The definition of $C_X(E)$ does not require the existence of a resolution. 
Only assuming that $Y$ is normal one can take 
$C_X(E) := \ov{f_\infty(\p_Y^{-1}(E \cap Y_\sm))}$.
In this way, the definition extends to positive characteristics.
\end{remark}

We can then state Nash's result as follows. 

\begin{theorem}[Nash \protect{\cite[Propositions~1 and~2]{Nas95}}]
\label{t:Nash}
The set of arcs through the singular locus $X_\sing$ of a variety $X$
has a decomposition into finitely many irreducible components given by
\[
\p_X^{-1}(X_\sing)_\red = \bigcup_{j \in J} C_X(E_j)
\]
where each $E_j$ is a prime divisor over $X$. 
\end{theorem}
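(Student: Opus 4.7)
The plan is to adapt the proof of Kolchin's theorem (Theorem~\ref{t:Kolchin}), keeping track of where each arc comes from on a resolution. Fix a resolution of singularities $f \colon Y \to X$ and decompose
\[
f^{-1}(X_\sing)_\red = \bigcup_{i \in I} E_i
\]
into irreducible components. The set $I$ is finite because $Y$ is Noetherian. Since $Y$ is smooth, the fact (already recorded in the excerpt) that $\p_Y^{-1}(S)$ is irreducible for every irreducible $S \subset Y$ gives that each $\p_Y^{-1}(E_i)$ is irreducible, hence so is its closed image $C_X(E_i) = \ov{f_\infty(\p_Y^{-1}(E_i))}$.

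Next, I would establish the set-theoretic equality
\[
\p_X^{-1}(X_\sing)_\red = \bigcup_{i \in I} C_X(E_i).
\]
The inclusion $\supseteq$ is immediate since $f(E_i) \subset X_\sing$. For $\subseteq$, I would mimic the proof of Theorem~\ref{t:Kolchin}: given an arc $\a \in \p_X^{-1}(X_\sing)$, if $\a$ is not entirely contained in $X_\sing$ then by the valuative criterion of properness (applied to the proper birational map $f$, which is an isomorphism over $X_\sm$) it lifts to an arc $\b \in Y_\infty$, and necessarily $\b(0) \in f^{-1}(X_\sing)$, so $\b \in \p_Y^{-1}(E_i)$ for some $i$; therefore $\a = f_\infty(\b) \in C_X(E_i)$. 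If instead $\a$ is entirely contained in $X_\sing$, write $X_\sing = \bigcup_k Z_k$ as a union of irreducible components and argue by induction on $\dim X$ that each $(Z_k)_\infty$ is irreducible; then pick a dense open $U_k \subset Z_k$ and, by generic smoothness, an open $V_k \subset f^{-1}(Z_k)$ on which $f|_{V_k} \to U_k$ is smooth, so that arcs on $U_k$ lift to arcs on $V_k$. Since $V_k$ meets some $E_i$, this shows $(U_k)_\infty \subset \bigcup_i C_X(E_i)$, and taking closures handles all of $(Z_k)_\infty$.

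Finally, to extract the irreducible components of $\p_X^{-1}(X_\sing)_\red$, let $J \subset I$ be the subset indexing the maximal elements of $\{C_X(E_i)\}_{i \in I}$ under inclusion. Then $\bigcup_{j \in J} C_X(E_j) = \bigcup_{i \in I} C_X(E_i)$, each $C_X(E_j)$ is irreducible and closed, and by maximality no one is contained in another; these are therefore exactly the irreducible components.

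The main obstacle is the set-theoretic equality, specifically the case of arcs entirely contained in $X_\sing$, since the valuative criterion alone only handles arcs whose generic point maps to $X_\sm$. This is the same delicate point that appears in Kolchin's theorem, and the proof strategy of combining induction on dimension with generic smoothness carries over essentially verbatim; the only additional bookkeeping is to record which component $E_i$ lifted arcs pass through, which is automatic once the lifting is produced. No further finiteness input is needed beyond the finiteness of $I$.
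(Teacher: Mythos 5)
Your proposal is correct and follows essentially the same route as the paper: fix a resolution, decompose $f^{-1}(X_\sing)_\red$ into the finitely many $E_i$, show $f_\infty$ restricted to $\p_Y^{-1}(f^{-1}(X_\sing))$ dominates $\p_X^{-1}(X_\sing)$ by the Kolchin-style combination of the valuative criterion and generic smoothness with induction on dimension, and then pass to the maximal elements among the irreducible closed sets $C_X(E_i)$. You have merely spelled out the step that the paper compresses into ``arguing as in the proof of Theorem~\ref{t:Kolchin}.''
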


\begin{remark}
To be precise, in \cite{Nas95} arcs are assumed to be defined by converging power series.
If $X$ is a complex variety, up to rescaling of the parameter, any such arc is given by a homolorphic map
$\a \colon \DD \to X$, where $\DD = \{t \in \C \mid |t| < 1\}$ is the open disk. 
It is interesting to compare Nash's result with the setting considered in \cite{KN15}, 
where holomorphic maps from the closed disk $\ov\DD = \{t \in \C \mid |t| \le 1\}$ are studied instead. 
In that paper, Koll\'ar and N\'emethi look at the space of \emph{short arcs}, 
which are those holomorphic maps $\f \colon \ov\DD \to X$ such that
$\Supp\f^{-1}(X_\sing) = \{0\}$. The space of short arcs of a normal surface
singularity relates to the link of the singularity, and it satisfies a McKay correspondence 
property for isolated quotient singularities in all dimensions. 
In general, the space of short arcs can have 
infinitely many connected components, thus presenting a quite different behavior
from the case of formal arcs. 
\end{remark}

\begin{definition}
\label{d:fat-thin}
An irreducible set $C \subset X_\infty$ is said to be \emph{thin}
if there exists a proper closed subscheme $Z \subsetneq X$ such that $C \subset Z_\infty$. 
An irreducible set $C \subset X_\infty$ that is not thin is said to be \emph{fat}.
\end{definition}

\begin{corollary}
Every irreducible component of $\p_X^{-1}(X_\sing)$ is fat in $X_\infty$.
\end{corollary}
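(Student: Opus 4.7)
The plan is to use Theorem~\ref{t:Nash} to reduce the statement to a property of each maximal divisorial set $C_X(E_j)$, and then produce an explicit arc in $C_X(E_j)$ whose generic point is the generic point of $X$; no such arc can lie in $Z_\infty$ for a proper closed subscheme $Z\subsetneq X$.

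First I would invoke Theorem~\ref{t:Nash} to write each irreducible component of $\p_X^{-1}(X_\sing)$ in the form $C_X(E_j) = \ov{f_\infty(\p_Y^{-1}(E_j))}$, for some prime divisor $E_j$ on a resolution $f \colon Y \to X$. Since $Y$ is smooth and $E_j \subset Y$ is irreducible, the fact recalled earlier in the paper (that $\p_Y^{-1}(S)$ is irreducible for every irreducible $S \subset Y$) tells us that $\p_Y^{-1}(E_j)$ is an irreducible subset of $Y_\infty$. Its generic point is therefore a single arc $\b \in Y_\infty$, and by construction the origin $\b(0)$ of $\b$ is the generic point of $E_j$.

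The key step is to identify the generic point $\b(\e)$. Because $Y$ is smooth, the projection $\p_Y \colon Y_\infty \to Y$ is surjective with irreducible fibers of infinite dimension, so the generic arc based at any point of $Y$ has generic point equal to the generic point of $Y$; in particular $\b(\e)$ is the generic point of $Y$. Now $f$ is birational, so $f_\infty(\b)$ is an arc on $X$ with $f_\infty(\b)(\e) = f(\b(\e))$ equal to the generic point of $X$, while $f_\infty(\b)(0) \in X_\sing$. Thus $f_\infty(\b) \in f_\infty(\p_Y^{-1}(E_j)) \subset C_X(E_j)$.

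To finish, I would observe that if $Z \subsetneq X$ is any proper closed subscheme then an arc factors through $Z$ if and only if its underlying morphism $\Spec K[[t]] \to X$ lands in $Z$ scheme-theoretically, which forces its generic point to lie in $Z$. Since $f_\infty(\b)(\e)$ is the generic point of $X$, it does not lie in $Z$, so $f_\infty(\b) \notin Z_\infty$. As $Z_\infty$ is closed in $X_\infty$, the inclusion $C_X(E_j) \subset Z_\infty$ would force $f_\infty(\b) \in Z_\infty$, contradiction. Hence $C_X(E_j)$ is fat. The only subtle point in the argument is the identification of the generic point of $\p_Y^{-1}(E_j)$, but this is exactly the content of the smoothness property of $Y_\infty \to Y$ already recorded before Example~\ref{eg:1}; there is no further obstacle.
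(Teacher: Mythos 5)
Your argument is correct and is essentially the paper's own proof: the paper's entire justification is that the generic point of each $C_X(E_j)$ dominates the generic point of $X$, and you simply unwind why — the generic arc of $\p_Y^{-1}(E_j)$ on the smooth resolution has generic point the generic point of $Y$, so its image under $f_\infty$ is an arc in $C_X(E_j)$ dominating the generic point of $X$, which cannot factor through any proper closed $Z \subsetneq X$. (The appeal to closedness of $Z_\infty$ at the end is unnecessary, since the containment $C_X(E_j) \subset Z_\infty$ already applies to your arc directly, but this is harmless.)
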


\begin{proof}
It suffices to observe that the arc corresponding to the
generic point of each $C_X(E_i)$ dominates the generic point of $X$. 
\end{proof}

Actually, the two cited propositions in \cite{Nas95} deal with arbitrary algebraic sets $W \subset X$;
here we are only considering the case $W = X_\sing$.  
It is asserted in \cite[Proposition~2]{Nas95}
that for an arbitrary algebraic set $W \subset X$, every irreducible component of $\p_X^{-1}(W)$
corresponds to some component of the inverse image of $W$ in the resolution.
This property does not seem to hold in such generality, at least in the way we have interpreted its meaning. 
An example where this property fails is given next.

\begin{example}
\label{eg:thin-component}
Let $X = (xy^2 = z^2) \subset \A^3$ be the Withney umbrella, as in Example~\ref{eg:1}. 
Denote by $S = (y=z=0)$ the singular locus of $X$, and let $O \in \A^3$
be the origin in the coordinates $(x,y,z)$. 

We claim that $\p_S^{-1}(O)$ is an irreducible component of $\p_X^{-1}(O)$. 
Note that $\p_S^{-1}(O)$ is irreducible since $S$ is smooth, and it is thin in $X_\infty$ since it is 
contained in $S_\infty$. In particular, it is not of the form
$C_X(E)$ for any prime divisor $E$ over $X$. In fact, as it is explained in Example~\ref{eg:1}, 
$\p_S^{-1}(O)$ is not dominated by any set in the space of arcs of any resolution of $X$. 

For short, let $F$ and $G$ respectively denote the fibers of 
$(\A^3)_3 \to \A^3$ and $X_3 \to X$ over $O$. 
Using the coordinates induced by $x,y,z$ via Hasse--Schmidt 
derivation, we have $F = \Spec k[x',y',z',x'',y'',z'',x''',y''',z''']$, and 
$G$ is defined in $F$ by the equations
$(z')^2 = 0$ and $x'(y')^2 - 2 z'z'' = 0$. In particular, $G$ has a decomposition 
into irreducible components $G_\red = V(x',z') \cup V(y',z')$. 
The image of $\p_S^{-1}(O)$ in $X_3$ is not contained in the component $V(x',z')$ of $G$
since, for instance, it contains the arc $(t,0,0)$.  
On the other hand, every arc 
$\a(t) = (x(t),y(t),z(t))$ in $\p_X^{-1}(O) \setminus \p_S^{-1}(O)$
lies over $V(x',z')$. Indeed, since $\a$ satisfies
$\a(0) = 0$ and $(y(t),z(t)) \ne (0,0)$,  
the condition $x(t)y(t)^2 = z(t)^2$ implies that the coefficients of $t$ in $x(t)$
and $z(t)$ must be zero. 
Therefore $\p_S^{-1}(O)$ is not contained in the closure of $\p_X^{-1}(O) \setminus \p_S^{-1}(O)$.
This proves our claim. 
\end{example}

Going back to the discussion leading to Theorem~\ref{t:Nash}, 
one should remark that while the index set $I$ 
depends on the choice of resolution, the irreducible decomposition
of $\p_X^{-1}(X_\sing)$ is intrinsic to $X$. The point is that $J$ may be strictly smaller than $I$, 
which means that there may be inclusions $C_i \subset C_j$. 

Suppose, for instance, that $f$ is an isomorphism over the smooth locus of $X$. 
Before taking closures, $f_\infty(\p_Y^{-1}(E_i))$ cannot be a subset of $f_\infty(\p_Y^{-1}(E_j))$
for $i \ne j$, since $f_\infty$ induces a bijection 
\[
Y_\infty \setminus (f^{-1}(X_\sing))_\infty \xrightarrow{1-1} X_\infty \setminus (X_\sing)_\infty.
\]
Away from $(f^{-1}(X_\sing))_\infty$ and $(X_\sing)_\infty$, which we can consider as subsets 
of \emph{measure zero} or \emph{infinite codimension},\footnote{These
notions can be made precise. Measure zero is intended from the point of
view of motivic integration. The codimension of a closed
subset of the space of arcs can be defined in two ways, either as the minimal dimension
of the local rings at the minimal primes, or as the limit of the codimensions
of the projections of the set to the sets of liftable jets. These two notions of codimension
may differ, but the property of being finite is equivalent in the two notions.}
$f_\infty$ is a continuous bijection but not a homeomorphism, and we can regard
the two arc spaces as being identified as sets (away from these sets of measure zero), 
with the left hand side equipped with a stronger topology. 
This explains why some sets $f_\infty(\p_Y^{-1}(E_i))$ may lie in the closure
of some other sets $f_\infty(\p_Y^{-1}(E_j))$.

One would like to be able to recognize $J$ in $I$ by only looking at 
resolution of singularities. Put another way: 
\begin{quote}
Is there a characterization of the irreducible components of $\p_X^{-1}(X_\sing)_\red$ in terms
of resolutions of $X$?
\end{quote}

This question has a natural formulation in the language of valuations. 
It is elementary to see that the set $C_X(E)$ only depends on the valuation $\val_E$ and
\[
\p_X(C_X(E)) = c_X(E),
\]
the center of $\val_E$ in $X$.\footnote{A more natural notation 
for these sets is $C_X(\val_E)$ and $c_X(\val_E)$.}  
The generic point $\a$ of $C_X(E)$ is the image of the generic point
$\~\a$ of $\p_Y^{-1}(E)$, and therefore we have
\[
\val_{\a}(h) = \val_{\~\a}(h \o f) = \ord_{E}(h\o f)
\]
for any rational function $h \in k(X)^*$.
This implies that the valuation associated to $C_X(E)$ 
is equal to the divisorial valuation defined by $E$.

\begin{remark}
The maximal divisorial set associated to a divisorial valuation $\val_E$
captures more information than just the valuation itself. Its codimension computed
on the level of jet schemes relates to the order of vanishing along $E$ of the Jacobian
of a resolution $Y \to X$, a property which follows from the results of \cite{DL99}
and is implicit in the change-of-variable formula in motivic integration. 
Because of this, maximal divisorial sets provide the essential
link between arc spaces and singularities in birational geometry.
These sets have been studied from this point of view in \cite{ELM04,Ish08,dFEI08}.
The connection between the dimension of the local ring
of $X_\infty$ at the generic point of a maximal divisorial set (or of its completion)
and other invariants of singularities that are measured by the valuation is more obscure. 
\end{remark}

Theorem~\ref{t:Nash} yields a natural identification 
between the irreducible components of $\p_X^{-1}(X_\sing)$ and certain divisorial valuations on $X$. 
Bearing this in mind, we give the following definition. 

\begin{definition}
A \emph{Nash valuation} of $X$ is the divisorial valuation $\val_C$
associated to an irreducible component $C$ of $\p_X^{-1}(X_\sing)$.
\end{definition}

One of the motivations of \cite{Nas95}
is to understand Nash valuations from the point of view of resolution of singularities.
A precise formulation of this problem, which is discussed in Section~\ref{s:higher-dim}, 
has become known as the \emph{Nash problem}.

To address this problem, given an arbitrary resolution $f \colon Y \to X$
and a prime divisor $E$ contained in $f^{-1}(X_\sing)$, 
one needs to analyze the condition that $C_X(E)$
is strictly contained in some irreducible component of $\p_X^{-1}(X_\sing)$.

The idea, which goes back to Lejeune-Jalabert \cite{LJ80}, 
is to detect such proximity by producing a 1-parameter family
of arcs which originates in $C_X(E)$ and moves outside of it but still
within $\p_X^{-1}(X_\sing)$. As intuitive as it may be, the
existence of such family of arcs is a delicate fact. 
 
The following \emph{curve selection lemma} formalizes this idea.
It should be clear that the arc $\Phi$ on $X_\infty$ provided by the theorem
gives the desired family of arcs on $X$.
This result is the key technical tool needed to address the Nash problem.

\begin{theorem}[Reguera \protect{\cite[Corollary~4.8]{Reg06}}]
\label{t:CSL}
Suppose that $C_X(E) \subsetneq C_X(F)$ 
for some prime divisors $E$ and $F$ over $X$.
Then there is an arc
\[
\Phi \colon \Spec K[[s]] \to X_\infty
\]
such that $\Phi(0) \in C_X(E)$ is the generic point of $C_X(E)$ and 
$\Phi(\e) \in C_X(F) \setminus C_X(E)$.\footnote{We are being somewhat sloppy here. 
To be precise, $\Phi(0)$ dominates the generic point of $C_X(E)$ and 
$\Phi(\e)$ dominates a point in $C_X(F) \setminus C_X(E)$. The field $K$ can be chosen 
to be a finite extension of the residue field of the generic point of $C_X(E)$.}
\end{theorem}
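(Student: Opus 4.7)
The plan is to reduce the statement to the classical curve selection lemma for Noetherian local rings, applied to the local ring of $X_\infty$ at the generic point $\eta_E$ of $C_X(E)$. I would set $R := \O_{X_\infty,\eta_E}$, with maximal ideal $\fm_R$. Under the hypothesis $C_X(E) \subsetneq C_X(F)$, the generic point $\eta_F$ of $C_X(F)$ is a proper generalization of $\eta_E$ inside $X_\infty$, so it corresponds to a prime ideal $\fp \subsetneq \fm_R$ of $R$. An arc $\Phi \colon \Spec K[[s]] \to X_\infty$ as in the statement is precisely the data of a local $k$-algebra homomorphism $R \to K[[s]]$ that sends $\fm_R$ into $(s)$ and has kernel exactly $\fp$, because then the image of the closed point dominates $\eta_E$ and the image of the generic point dominates $\eta_F \in C_X(F)\setminus C_X(E)$. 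The task therefore becomes the production of such a homomorphism.

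The main obstacle is that $X_\infty$ is not of finite type over $k$, so $R$ itself is not Noetherian and no classical curve selection lemma applies directly. The hard part of the argument is therefore the \emph{stable point theorem}: the $\fm_R$-adic completion $\widehat R$ is a Noetherian local ring of finite Krull dimension, equal to the codimension of the cylinder $C_X(E)$ inside $X_\infty$. I would prove this by fixing a resolution $f\colon Y\to X$, lifting $\eta_E$ to the generic point $\widetilde\eta_E$ of the irreducible cylinder $\p_Y^{-1}(E)\subset Y_\infty$, where the local structure is transparent because $Y$ is smooth, and then combining Theorem~\ref{t:Gre} with the formula from \cite{DL99} for the order of vanishing of $\Jac_f$ along $E$ to obtain a uniform bound on the number of Hasse--Schmidt derivatives of a set of equations of $X$ needed to present $\widehat R$. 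This uniform bound forces $\widehat R$ to be a quotient of a power series ring in finitely many variables over a field, and hence Noetherian.

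Given the Noetherianity of $\widehat R$, the conclusion follows from the standard curve selection lemma in commutative algebra. I would pick a minimal prime $\widehat\fp$ of $\widehat R$ lying over $\fp$; by faithful flatness of $R\to\widehat R$ it is contained strictly in the maximal ideal of $\widehat R$ and contracts back to $\fp$ in $R$. Since $\widehat R/\widehat\fp$ is a complete Noetherian local domain of positive dimension containing $k$, Cohen's structure theorem together with the classical curve selection lemma yields an injective local homomorphism $\widehat R/\widehat\fp \hookrightarrow K[[s]]$ for some finite extension $K$ of the residue field of $R$ at $\fm_R$. Composing with $R\to\widehat R\to\widehat R/\widehat\fp$ gives a homomorphism $R\to K[[s]]$ with the required properties, and the corresponding arc $\Phi\colon \Spec K[[s]] \to \Spec R \hookrightarrow X_\infty$ satisfies $\Phi(0) = \eta_E$ and $\Phi(\e) = \eta_F \in C_X(F)\setminus C_X(E)$, as required.
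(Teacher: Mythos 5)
Your overall route is the one the paper itself indicates (and defers to \cite{Reg06} for): pass to the local ring $R=\O_{X_\infty,\eta_E}$ at the generic point of $C_X(E)$, establish Reguera's key theorem that the $\fm_R$-adic completion $\widehat R$ is Noetherian, and then conclude by Cohen's structure theorem. The difficulty is indeed concentrated in the Noetherianity statement, and your sketch of it (reduction to a resolution, Greenberg's theorem, and the order of vanishing of $\Jac_f$) is consistent with how it is actually proved.

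The final step, however, contains a genuine error. You insist that the homomorphism $R\to K[[s]]$ have kernel \emph{exactly} $\fp$, and you claim an \emph{injective} local homomorphism $\widehat R/\widehat\fp\hookrightarrow K[[s]]$ with $K$ finite over the residue field. No such injection exists once $A:=\widehat R/\widehat\fp$ has dimension $\ge 2$: by Cohen's theorem $A$ is module-finite over a power series subring $k'[[t_1,\dots,t_d]]$, $d=\dim A$; a local homomorphism to $K[[s]]$ sends $t_1$ to some $f\in sK[[s]]$, and if $f\ne 0$ then $K[[s]]$ is module-finite over $k'[[f]]$, so the image of $t_2$ is integral over $k'[[f]]$ and the resulting monic Weierstrass polynomial is a nonzero element of the kernel (if $f=0$ the kernel is even larger). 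Hence the kernel of any such map is a prime of coheight $\le 1$, and you cannot in general force $\Phi(\e)$ to dominate the generic point $\eta_F$ of $C_X(F)$. The theorem, as the footnote makes explicit, only requires $\Phi(\e)$ to dominate \emph{some} point of $C_X(F)\setminus C_X(E)$, i.e.\ the kernel need only be a prime $\fq$ with $\fp\subset\fq\subsetneq\fm_R$; so the correct step is to first choose in the Noetherian ring $\widehat R$ a non-maximal prime $\fq\supset\widehat\fp$ with $\dim\widehat R/\fq=1$, and only then normalize the one-dimensional complete local domain $\widehat R/\fq$ to obtain a complete DVR of the form $K[[s]]$ finite over it. A secondary point: you invoke faithful flatness of $R\to\widehat R$, but $R$ is not Noetherian, so flatness of the completion is not automatic; the fact that $\fp$ survives in $\widehat R$ (that $\fp\widehat R$ is not the maximal ideal and its minimal primes contract to $\fp$) must be taken from Reguera's analysis rather than from general commutative algebra.
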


\begin{remark}
The theorem is stated more generally for a larger class of subsets of $X_\infty$
called \emph{generically stable sets} (we refer to \cite{Reg06} for the precise definition).
The fact that $C_X(E)$ and $C_X(F)$ are generically stable sets
is a consequence of Theorem~\ref{t:f-infty-constr}.
\end{remark}

In the Noetherian setting, the curve selection lemma essentially follows by cutting 
down to a curve, normalizing, and completing.
The curve selection lemma however fails in general for non Noetherian schemes.

\begin{example}
Let $C = \Spec k[x_1,x_2,x_3,\dots]/I$ where $I$ is the ideal generated by the
polynomials $x_1-(x_i)^i$ for $i \ge 2$. 
Let $\Phi \colon \Spec k[[s]] \to C$ be any morphism such that $\Phi(0) = O$, the origin
of $C$. Writing $\Phi(s) = (x_1(s),x_2(s),x_3(s),\dots)$, we have $\ord_sx_i(s) \ge 1$ for all $i$. 
From the equations $x_1(s) = x_i(s)^i$, we deduce that $x_i(s) = 0$ for every $i$, and hence
$\Phi$ is the constant arc.  
We note that $C$ can be realized as a closed irreducible set in the space
of arcs of any variety. 
\end{example}

The main result behind the curve selection lemma is another theorem of Reguera
stating that if $\a$ is the generic point of $C_X(E)$ (or, more generally,
if $\a \in X_\infty$ is what is called a \emph{stable point}), then 
the completed local ring $\widehat{\O_{X_\infty,\a}}$ is Noetherian \cite[Corollary~4.6]{Reg06}. 
Once this property is established, the proof of the curve selection lemma follows 
as a fairly standard application of Cohen's structure theorem 
(see \cite{Reg06} for details).

\begin{remark}
The fact that $\widehat{\O_{X_\infty,\a}}$ is a Noetherian ring is a delicate property. 
There are examples where, before completion, the local ring 
$\O_{X_\infty,\a}$ is not Noetherian \cite[Example~3.16]{Reg09}.
\end{remark}

\begin{remark}
A related result of Grinberg and Kazhdan \cite{GK00}, reproved and extended to all characteristics
by Drinfeld \cite{Dri}, states that if $\g \in X_\infty \setminus (X_\sing)_\infty$ is
a $k$-valued point, then there is an isomorphism
\[
\widehat{\O_{X_\infty,\g}} \cong k[[x_1,x_2,x_3,\dots]]/I
\]
where $I$ is the extension of an ideal in a finite dimensional polynomial ring 
$k[x_1,\dots,x_n]$.
We will not use this result in this paper. 
\end{remark}

\section{Dimension one}

The arc space of a curve is fairly easy to understand. 
Let $X$ be a curve over an algebraically closed field of characteristic zero, 
and suppose that $P \in X$ is a singular point. 
Let $f \colon Y \to X$ be the normalization, and write 
$f^{-1}(P) = \{Q_1,\dots,Q_r\}$. Note that $r$ is the number of analytic branches of $X$ at $P$. 

\begin{proposition}
The fiber $\p_X^{-1}(P)$ has $r$ irreducible components.
For every $i=1,\dots,r$, the set $f_\infty(\p_Y^{-1}(Q_i)_\red)$ is closed and is
one of the irreducible components of $\p_X^{-1}(P)$.
\end{proposition}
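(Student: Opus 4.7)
The plan is to combine the valuative criterion of properness for the finite birational morphism $f$ with the analytic branch structure at $P$.

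Since $Y$ is smooth at each $Q_i$, each fiber $\p_Y^{-1}(Q_i)$ is irreducible, so the continuous image $f_\infty(\p_Y^{-1}(Q_i)_\red)$ is irreducible. For the set-theoretic equality $\p_X^{-1}(P) = \bigcup_{i=1}^r f_\infty(\p_Y^{-1}(Q_i)_\red)$, a constant arc at $P$ lifts to the constant arc at every $Q_i$, while a non-constant arc $\a\colon \Spec K[[t]] \to X$ through $P$ sends $\e$ to the generic point of $X$ (as $X$ is one-dimensional), so the valuative criterion of properness applied to the proper birational morphism $f$ and the DVR $K[[t]]$ yields a unique lift $\~\a\colon \Spec K[[t]] \to Y$ with $\~\a(0) \in f^{-1}(P) = \{Q_1, \dots, Q_r\}$, placing $\a$ in exactly one $f_\infty(\p_Y^{-1}(Q_i)_\red)$.

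For closedness I would give an analytic characterization. Let $\frp_1, \dots, \frp_r \subset \widehat{\O_{X,P}}$ be the minimal primes of the completed local ring, labeled so that $\frp_i$ is the kernel of the composition $\widehat{\O_{X,P}} \to \widehat{\O_{X,P}}/\frp_i \hookrightarrow \widehat{\O_{Y,Q_i}}$ obtained by completing $f^\sharp_{Q_i}$ (this uses that in characteristic zero normalization commutes with completion). For $\a \in \p_X^{-1}(P)$, the pullback is continuous in the $\frm_P$-adic topology and extends uniquely to $\a^\sharp\colon \widehat{\O_{X,P}} \to K[[t]]$. I claim
\[
f_\infty(\p_Y^{-1}(Q_i)_\red) = \{\a \in \p_X^{-1}(P) : \a^\sharp(h) = 0 \text{ for all } h \in \frp_i\}.
\]
The inclusion $\subseteq$ follows because if $\a = f_\infty(\~\a)$ with $\~\a(0) = Q_i$, then $\a^\sharp$ factors through $\widehat{\O_{X,P}}/\frp_i$ after completion. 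For $\supseteq$, a constant arc trivially lifts to $Q_i$; for non-constant $\a$, the lift $\~\a$ from the first step lands at some $Q_j$, and applying the $\subseteq$ direction to $\~\a$ gives $\a^\sharp(\frp_j) = 0$. The kernel of the extended $\a^\sharp$ on the one-dimensional $\widehat{\O_{X,P}}$ is a single minimal prime (it is strictly below $\frm_P$ since $\a$ is non-constant), so $\a^\sharp(\frp_i) = 0$ and $\a^\sharp(\frp_j) = 0$ force $\frp_i = \frp_j$. The right-hand side is Zariski closed in $X_\infty$ because for each $h \in \frp_i$ and each $N$, the coefficient of $t^N$ in $\a^\sharp(h)$ is a polynomial function of finitely many coordinates on $X_\infty$, so its vanishing is a closed condition.

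The $r$ irreducible closed sets $f_\infty(\p_Y^{-1}(Q_i)_\red)$ are pairwise incomparable by the same kernel observation: for a non-constant $\a$ in $f_\infty(\p_Y^{-1}(Q_i)_\red)$ the kernel of $\a^\sharp$ is exactly $\frp_i$, so $\a$ cannot satisfy $\a^\sharp(\frp_j) = 0$ for $j \ne i$. Thus these are precisely the irreducible components of $\p_X^{-1}(P)$, giving $r$ components. The main obstacle I anticipate is pinning down the correspondence between the minimal primes $\frp_i$ of $\widehat{\O_{X,P}}$ and the preimages $Q_i \in f^{-1}(P)$ and showing that the continuous extension of $\a^\sharp$ to the completion interacts cleanly with the Zariski topology on the non-Noetherian arc space.
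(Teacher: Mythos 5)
Your proof is correct. The first step—the set-theoretic decomposition of $\p_X^{-1}(P)$ via the dichotomy between constant arcs, which lift to every $Q_i$, and non-constant arcs, which lift uniquely through exactly one $Q_i$—coincides with the paper's argument. Your treatment of closedness, however, takes a genuinely different route. The paper deduces closedness directly from the lifting dichotomy, asserting that $f_\infty(\p_Y^{-1}(Q_i)_\red)$ equals $C_X(Q_i)$, i.e., its own closure; the step showing that a non-constant arc in that closure must still lift through $Q_i$ rather than through some $Q_j$ with $j \ne i$ is left implicit there, and it is exactly this point that your argument makes explicit. Identifying $f_\infty(\p_Y^{-1}(Q_i)_\red)$ with the zero locus of the branch ideal $\frp_i \subset \widehat{\O_{X,P}}$ exhibits each component as the set of arcs factoring through the $i$-th analytic branch, which is manifestly closed: the technical point you flag at the end is handled by noting that $\a^\sharp(\frm_P^{N+1}) \subset (t^{N+1})$, so the coefficient of $t^N$ in $\a^\sharp(h)$ depends only on $h$ modulo $\frm_P^{N+1}$ and hence agrees with that of a representative in $\O_{X,P}$, which is a regular function of finitely many Hasse--Schmidt coordinates on $\p_X^{-1}(P)$. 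The price of your approach is invoking the compatibility of normalization with completion; the payoff is that closedness and pairwise incomparability both fall out of the single observation that the kernel of the extended $\a^\sharp$ is one minimal prime for every non-constant $\a$. One small point worth making explicit: $X$, being a variety, is irreducible, which is what guarantees that a non-constant arc sends $\e$ to the generic point so that the valuative criterion applies.
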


\begin{proof}
For any field extension $K/k$, every constant $K$-valued arc in 
$X$ through $P$ has $r$ distinct lifts to $Y$, 
each mapping to a distinct point $Q_i$. By contrast,
every non-constant $K$-valued arc through $P$
lifts uniquely to an arc on $Y$ which passes through one of the $Q_i$. 
This shows two things: for every $i$, the image 
$f_\infty(\p_Y^{-1}(Q_i)_\red)$ is equal to $C_X(Q_i)$ and hence is closed,
and, for every $i \ne j$, the intersection of 
$f_\infty(\p_Y^{-1}(Q_i)_\red) \cap f_\infty(\p_Y^{-1}(Q_j)_\red)$
consists only of the trivial arc at $P$. The proposition follows from these two properties. 
\end{proof}

\begin{remark}
The sets of jets through the singularity of a curve may of course have more irreducible components
than the number of branches. 
The case of a node $X = (xy=0) \subset \A^2$ provides an elementary example: for every $m \ge 1$, 
the fiber over the origin $O \in X$ of the truncation map $\t_m \colon X_m \to X$ has 
a decomposition into $m$ irreducible components
\[
\t_m^{-1}(O)_\red = \bigcup_{i+j=m+1} C_{i,j}
\]
where $C_{i,j}$ is the closure of the set of
$m$-jets on $\A^2$ with order of contact $i \ge 1$ along $(x=0)$
and $j \ge 1$ along $(y=0)$. 
\end{remark}

\section{Dimension two}

Throughout this section, suppose that $X$ is a surface defined
over an algebraically closed field of characteristic zero. There is a natural set
of divisorial valuations that one can regard in connection to the Nash valuations, namely, 
the set of divisorial valuations $\val_{E_i}$ associated to the
exceptional divisors $E_1,\dots,E_m$ in the minimal resolution of singularities
\[
f \colon Y \to X.
\]
Since we are not assuming that $X$ is normal, 
we should stress that a prime divisor $E$ on $Y$ is defined to be exceptional over $X$
if $f$ is not an isomorphism at the generic point of $E$.

In his paper, Nash asked whether there exists a natural one-to-one correspondence 
between the irreducible components of $\p_X^{-1}(X_\sing)$ and the exceptional divisors
in the minimal resolution of $X$ (that is, the irreducible components of $f^{-1}(X_\sing)$), 
the correspondence given indeed by identification between the associated valuations. 
Nash verified the question for ${\bf A}_n$ singularities, where the correspondence is
not hard to check.

A particularly simple case 
which already illustrates in concrete terms the geometry of the correspondence is that of 
an ${\bf A}_2$ singularity.

\begin{example}
\label{eg:A2-sing}
Let $X = (xy = z^3) \subset \A^3$. The blow-up of the origin $O \in X$ gives
the minimal resolution $f \colon Y \to X$. Let $U \subset Y$ be the affine chart 
with coordinates $(u,v)$ where $f$ is given by $(x,y,z) = (u^2v,uv^2,uv)$.
The two exceptional divisors $E_1,E_2$ are given in $U$ by $E_1 = (u=0)$ and $E_2 = (v=0)$. 
Let $\g(t) = (x(t),y(t),z(t))$ be an arbitrary arc on $X$ through $O$. 
The power series
\[
x(t) = \sum_{i =1}^\infty a_it^i, \quad
y(t) = \sum_{i =1}^\infty b_it^i, \quad
z(t) = \sum_{i =1}^\infty c_it^i
\]
satisfy the equation $x(t)y(t) = z(t)^3$. 
Expanding, this gives
\begin{multline*}
a_1b_1t^2 + (a_1b_2 + a_2b_1)t^3 + \dots + \Big(\sum_{i+j=m} a_ib_j\Big)t^m + \dots \\
= c_1^3 t^3 + \dots + \Big(\sum_{i+j+k=m} c_ic_jc_k\Big)t^m + \dots
\end{multline*}
Comparing the coefficients of $t^2$, we get the equation $a_1b_1=0$. This leads to two cases. 

Suppose $a_1=0$. Generically, we have $b_1 \ne 0$, and hence we can solve all remaining
equations for $a_i$ ($i \ge 2$) in terms of the $b_j$ and $c_k$, which are free parameters. 
This gives an irreducible component $C_1$ of $\p_X^{-1}(O)$ whose generic arc
$\a(t) = (x(t),y(t),z(t))$ has first entry of order $\ord_t(x(t)) = 2$,
and the other two entries have order one. Write
\[
\a(t) = (t^2 \.\ov x(t), t \.\ov y(t), t \.\ov z(t))
\]
where $\ov x(t),\ov y(t),\ov z(t)$ are units. 
Using the equations $u = x/z$ and $v = y/z$, the lift $\~\a$ of $\a$ to $Y$ has entries
\[
\~\a(t) = \Big(t\.\frac{\ov x(t)}{\ov z(t)}, \frac{\ov y(t)}{\ov z(t)}\Big)
\]
in the coordinates $(u,v)$ of $U$. This shows that $\~\a(t)$ has order of contact one
with $E_1$ and order of contact zero with $E_2$. 
In fact, one can argue that $\~\a$ is the generic point of $\p_Y^{-1}(E_1)$. 

Taking $b_1=0$, we get in a similar way the other component $C_2$ of $\p_X^{-1}(O)$, 
which corresponds to $E_2$. 
\end{example}

The simplicity of this example can be misleading. While 
arc spaces of ${\bf A}_n$ singularities
are still fairly easy to understand \cite{Nas95}, 
it was only recently that an answer to Nash's question was given
for ${\bf D}_n$ singularities \cite{Ple08}
and for ${\bf E}_6,{\bf E}_7,{\bf E}_8$ \cite{PS12,PP13}.
The fact is that, even when dealing with very simple equations like those of rational double points, 
the complexity of the equations of the arc space can grow very quickly.
The case of sandwiched singularities was solved in \cite{LJR99}, 
and a general proof for all rational surface singularities was given in \cite{Reg12}. 
Some families of non-rational surface singularities where Nash's question has a positive anwer
were found in \cite{PPP06}. 

The answer to Nash's question given in \cite{PP13} for quotient surface singularities
uses the reduction to the problem to 
a topological setting due to \cite{FdB12}.
Following the same approach, a complete proof valid for all surfaces 
was finally found by Fernandez de Bobadilla and Pe Pereira. 

\begin{theorem}[Fernandez de Bobadilla and Pe Pereira \protect{\cite[Main Theorem]{FdBPP12}}]
\label{t:Nash-dim=2}
A valuation on a surface $X$ is a Nash valuation if and only if 
it is the valuation associated to an exceptional divisor on the minimal resolution of $X$. 
\end{theorem}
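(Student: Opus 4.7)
The plan is to handle the two directions of the biconditional separately. For the implication that every Nash valuation arises from the minimal resolution, I would argue combinatorially with maximal divisorial sets. For the converse — that every exceptional divisor on the minimal resolution yields a Nash valuation — the plan is to argue by contradiction using Reguera's curve selection lemma (Theorem~\ref{t:CSL}) together with a topological obstruction on the link of the singularity.

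For the easier direction, I would use that on a surface any resolution factors through the minimal resolution $f : Y \to X$. Suppose $\val_E$ is a Nash valuation coming (via Theorem~\ref{t:Nash}) from an exceptional divisor $E$ on some modification $Y' \to Y$ that is not already on $Y$. Then $E$ arises by further blowing up the exceptional locus of $Y$, and there is a unique divisor $E_i$ on $Y$ whose image contains the center of $\val_E$. The plan is to show $C_X(E) \subsetneq C_X(E_i)$: every arc on $Y'$ through the generic point of $E$ maps via $Y' \to Y$ to an arc through $E_i$, giving the inclusion; and a generic arc through $E_i$ whose tangent direction avoids the blown-up center fails to lift to $E$, giving strictness. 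This contradicts $C_X(E)$ being an irreducible component of $\p_X^{-1}(X_\sing)_\red$, forcing $E$ to lie on $Y$.

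For the converse, suppose some exceptional divisor $E_j$ on $Y$ satisfies $C_X(E_j) \subsetneq C_X(F)$ for some prime divisor $F$ over $X$. By Theorem~\ref{t:CSL}, there is a formal wedge $\Phi : \Spec K[[s]] \to X_\infty$ whose closed point dominates the generic point of $C_X(E_j)$ and whose generic point lies in $C_X(F) \setminus C_X(E_j)$; equivalently, a formal map $\widetilde{\Phi} : \Spec K[[s,t]] \to X$ whose special arc lifts to $Y$ through $E_j$ and whose generic arc lifts through a model of $F$. The next step is to approximate $\widetilde{\Phi}$ by a convergent wedge without losing this lifting behavior, using an Artin-style equisingular approximation. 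After transferring to the analytic category, one obtains a holomorphic map from a bidisk $\Delta^2$ into an analytic germ of $X$ with the same lifting data.

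The final step is a topological obstruction. Restricting the convergent wedge to the link $L_X$ of the singularity yields a continuous one-parameter family of short arcs on $L_X$ connecting a loop winding around $E_j$ to a loop associated with $F$. On the link of a normal surface singularity with minimal resolution $Y$, such a family cannot exist: the negative-definite intersection form on $Y$ and the resulting structure of $\pi_1$ of the complement of the exceptional set homotopically separate loops around different $E_i$'s, providing the desired contradiction. The main obstacle, and the contribution of \cite{FdBPP12}, lies here: first in setting up the formal-to-convergent approximation so that the lifting behavior across $f$ is preserved, and then in carrying out the homotopical separation argument on $L_X$. This strategy is surface-specific and, consistently with the failure of the Nash correspondence in higher dimensions, does not extend beyond $\dim X = 2$.
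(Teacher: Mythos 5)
Your first direction is fine: since every resolution of a surface factors through the minimal resolution $f\colon Y\to X$, a divisor $E$ not on $Y$ has $C_X(E)\subsetneq C_X(E_i)$ for some component $E_i$ of $f^{-1}(X_\sing)$, so $C_X(E)$ cannot be a component of $\p_X^{-1}(X_\sing)$. (The paper gets this even more directly from Theorem~\ref{t:Nash} applied to the minimal resolution.) For the hard direction you correctly identify the contradiction setup, the role of Theorem~\ref{t:CSL}, and the formal-to-convergent approximation step of \cite{FdB12}. The gap is in your final step.

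The proposed obstruction --- that $\pi_1$ of the link homotopically separates loops around distinct exceptional divisors, so the family of arcs cannot migrate from $E_j$ to $F$ --- does not work, for two reasons. First, it is false as stated: links of rational surface singularities have small fundamental groups (for an ${\bf A}_n$ singularity the link is a lens space with $\pi_1\cong\Z/(n+1)$, yet there are $n$ exceptional curves), and meridians of distinct components of $\Ex(f)$ are in general not distinguished homotopically. Second, and more tellingly, your argument nowhere uses that $Y$ is the \emph{minimal} resolution. Adjacencies $C_X(E)\subsetneq C_X(F)$ do occur whenever $E$ lives only on a non-minimal resolution, and Theorem~\ref{t:CSL} produces a wedge in those cases too; any purported topological obstruction that ignores minimality would therefore "prove" that every divisorial valuation is a Nash valuation, which is absurd. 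The actual argument of \cite{FdBPP12} is an Euler characteristic count, not a $\pi_1$ argument: the lifted curves $D_s\subset Y$ are disks, so $\chi(D_s)=1$; as $s\to 0$ they degenerate to a cycle $D_0+\sum a_iE_i$, and a careful count over tubular neighborhoods of the $E_i$ and balls around the singular points of the limit cycle, using Hurwitz and adjunction, yields $\chi(D_s)\le -K_Y\cdot\sum a_iE_i\le 0$ --- the last inequality being exactly where $f$-nefness of $K_Y$, i.e.\ minimality, enters. Note also that the paper's primary proof is a different, purely algebraic one (resolving the wedge $\Spec k[[s,t]]\to X$ and deriving a contradiction from a chain of inequalities between orders of relative canonical divisors), which again pivots on $K_Y$ being $f$-nef; in either route, that nefness is the irreplaceable input your proposal is missing.
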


We present here a purely algebraic proof of this result 
that is based on the proof of the main theorem of \cite{dFD16}.

\begin{proof}[Proof of Theorem~\ref{t:Nash-dim=2}]
Let $f \colon Y \to X$ be the minimal resolution of singularities. 
Given what we already discussed about the decomposition of $\p_X^{-1}(X_\sing)$ into
irreducible components, in order to prove the theorem we only need to show that
if $E$ is a prime exceptional divisor on $Y$, then $C_X(E)$ is an irreducible 
component of $\p_X^{-1}(X_\sing)$. 

We proceed by way of contradiction
and assume that $C_X(E)$ is not an irreducible component of $\p_X^{-1}(X_\sing)$. 
This means that $C_X(E)$ is contained in $C_X(F)$ for some other exceptional divisor $F$. 

Let $p \in E$ be a very general closed point.\footnote{By \emph{very general}, 
we mean that the point is taken in the complement of countably many
proper closed subsets.}
By applying Theorem~\ref{t:CSL} in conjuction with a suitable specialization argument 
(\cite[Proposition~2.9]{LJR12}, \cite[Theorem~7.6]{dFD16}), we obtain an arc
\[
\Phi \colon \Spec k[[s]] \to X_\infty
\]
on the space of arcs of $X$ such that
\begin{enumerate}
\item
$\a_0 := \Phi(0)$ is a $k$-valued arc on $X$ whose lift $\~\a_0$ to $Y$
is an arc with order of contact one with $E$ at $p$ (i.e., $\~\a_0(0) = p$
and $\ord_{\~\a_0}(E) = 1$),
\item
$\a_\e := \Phi(\e)$ is a $k(\hskip-1pt(s)\hskip-1pt)$-valued point of $\p_X^{-1}(X_\sing) \setminus C_X(E)$. 
\end{enumerate}

By definition, $\Phi$ is a formal 1-parameter family of arcs
giving an infinitesimal deformation of $\a_0$ in $X_\infty$. 
We think of $\Phi$ as a morphism
\[
\Phi \colon S = \Spec k[[s,t]] \to X, \quad \Phi(s,t) = \a_s(t)
\]
from a 2-dimensional regular germ to $X$.
Conditions~(a) and~(b) imply that the rational map 
\[
\~\Phi := f^{-1} \o \Phi \colon S \rat Y
\]
is not well-defined. Let $g \colon Z \to S$ be the minimal sequence of
monomial transformations resolving the indeterminacies of $\~\Phi$, 
and let $g' \colon Z' \to S$ be the normalized blow-up of the
ideal $\Phi^{-1}\fa\.\O_S$ where $\fa \subset \O_X$ is an ideal such that $Y = \Bl_\fa X$.
We have the commutative diagram
\[
\xymatrix@C=8pt{
G \ar@{}[r]|-\subset 
& Z \ar@/_1pc/[rrdd]_{g} \ar[rrd]^{h}\ar@/^1pc/[rrrrd]^\f &&&&&\\
&&& Z' \ar[d]_{g'}\ar[rr]^{\f'} && Y \ar[d]^f \ar@{}[r]|-\supset & E \\
&&& S \ar[rr]^\Phi \ar@{-->}[rru]^{\~\Phi} && X 
}
\]
where we denote by $G$ the $g$-exceptional divisor intersecting
the proper transform $T$ of the $t$-axis $(s=0) \subset S$. 
The morphisms $\f$ and $\f'$ are induced by resolving
the indeterminacies of $\~\Phi$, and $h$ is the morphism 
contracting all $g$-exceptional curves that are contracted to a point by $\f$. 
One can check that $Z$ is regular, $Z'$ has rational singularities and hence is $\Q$-factorial,
and $h$ is the minimal resolution of singularities of $Z'$
(see \cite[Proposition~4.1]{dFD16} for details).

The image of the exceptional locus $\Ex(g)$ of $g$ in $Y$
is contained in the exceptional locus of $f$, and so is 
the image of the exceptional locus $\Ex(g')$ of $g'$.
Recall that none of the irreducible components of $\Ex(g')$ is contracted by $\f'$. 
Since $p$ was picked to be a general point of $E$, and it belongs to $\f(G)$, 
every irreducible component of $\Ex(g')$ 
that contains $h(G)$ must pass through $p$ and hence dominate $E$. 
Note that there is at least one such component of $\Ex(g')$, 
since $g'$ is not an isomorphism. 
This implies that $\f(Z)$ contains the generic point of $E$. On the other hand, 
$\f(Z)$ is not contained in $E$, since $\f(T)$ is an arc
on $Y$ with finite order of contact with $E$ and hence not entirely contained in $E$. 
We conclude that $\f$ is a dominant map.

Let $K_{Z/Y}$ be the relative canonical divisor of $Z$ over $Y$, 
locally defined by the Jacobian ideal $\Jac_\f \subset \O_Z$, and let
$K_{Z'/Y} = h_*K_{Z/Y}$, which we think of as the relative canonical divisor of $Z'$ 
over $Y$. Similarly, let $K_{Z/S}$ be the relative canonical divisor of $g$ and 
let $K_{Z'/S} = h_*K_{Z/S}$.\footnote{The formal 
definition of relative canonical divisor via sheaves of differentials
is not straigtforward, as $\Om_{Z/k}$ is not the right object in this setting.
For a correct formal definition, one needs to replace $\Om_{Z/k}$
with the sheaf of \emph{special differentials}. Once this adjustment is done, 
the same definitions and properties follow into place as in the usual setting, 
and therefore we shall omit this discussion here. For details, 
we refer to \cite[Section~4]{dFD16}.}
We decompose 
\[
K_{Z'/Y} = K_{Z'/Y}^{\text{$g'$-exc}} + K_{Z'/Y}^{\text{$g'$-hor}}
\]
where every component of $K_{Z'/Y}^{\text{$g'$-exc}}$ is $g'$-exceptional
and none of the components of $K_{Z'/Y}^{\text{$g'$-hor}}$ is. 

We claim that the following series of inequalities hold:
\[
1 \overset{(1)}{\le} \ord_G(K_{Z/S}) \overset{(2)}{\le} \ord_G(h^*K_{Z'/S}) 
\overset{(3)}{\le} \ord_G(h^*K_{Z'/Y}^{\text{$g'$-exc}}) \overset{(4)}{<} \ord_G(\f^*E) \overset{(5)}{=} 1.
\]
This clearly gives a contradiction, which is what we are after. 

The reminder of the proof is devoted to explain these inequalities. We proceed with one
inequality at a time.

\medskip
\noindent
{\it Inequality} (1). 
The fact that $\ord_G(K_{Z/S}) \ge 1$ holds simply because
$S$ is regular and $G$ is $g$-exceptional. 

\medskip
\noindent
{\it Inequality} (2). 
This inequality follows from the fact that the $\Q$-divisor
\[
K_{Z/Z'} = K_{Z/S} - h^*K_{Z'/S}
\]
is $h$-nef and $h$-exceptional since $h$ is the minimal resolution of singularities
of $Z'$, and hence is anti-effective by the negative definiteness 
of the intersection matrix of the $h$-exceptional divisors (see \cite[Proposition~4.12]{dFD16}). 
Here we are using the fact that $Z'$, having rational singularities, is $\Q$-factorial
and therefore the pull-back $h^*K_{Z'/S}$ is defined.

\medskip
\noindent
{\it Inequality} (3). 
Here is where we use the fact that $f$ is the minimal resolution of singularities of $X$.
First, notice that the divisor 
\[
K_{Z'/S} - K_{Z'/Y}^{\text{$g'$-exc}}
\]
is $g'$-exceptional. We claim that this divisor is also $g'$-nef. Indeed, we have 
\[
K_{Z'/S} - K_{Z'/Y}^{\text{$g'$-exc}} \sim K_{Z'} - K_{Z'/Y} + K_{Z'/Y}^{\text{$g'$-hor}}
\sim (\f')^*K_Y + K_{Z'/Y}^{\text{$g'$-hor}}. 
\]
Since $f$ is the minimal resolution, $K_Y$ is $f$-nef, and hence 
$(\f')^*K_Y$ is $g'$-nef. On the other hand, $K_{Z'/Y}^{\text{$g'$-hor}}$ is clearly $g'$-nef
because it is effective and contains no $g'$-exceptional divisors. 
Therefore $K_{Z'/S} - K_{Z'/Y}^{\text{$g'$-exc}}$ is $g'$-nef, as claimed. 
We conclude that this divisor is anti-effective, 
and this gives the third inequality.

\medskip
\noindent
{\it Inequality} (4). 
Let $C_1,\dots,C_n$ be the irreducible components of $\Ex(g')$ containing $h(G)$.
Each $C_i$ dominates $E$, and we have $\ord_{C_i}(K_{Z'/Y}) = \ord_{C_i}((\f')^*E) - 1$
by a Hurwitz-type computation. This implies that
\[
\ord_G(h^*K_{Z'/Y}^{\text{$g'$-exc}}) < \ord_G(\f^*E)
\]
(see \cite[(5.4)]{dFD16} for more details).
Here we are using again that $Z'$ is $\Q$-factorial. 

\medskip
\noindent
{\it Equality} (5). 
This follows by the way we chose $\Phi$. Recall that $\a_0 = \Phi(0)$ lifts to an arc 
$\~\a_0 \colon \Spec k[[t]] \to Y$
with order of contact one along $E$. This arc is parametrized by the $t$-axis of $S$. 
This means that $\~\a_0$ factors through a morphism $\ff \colon \Spec k[[t]] \to Z$
which gives a parameterization of the proper transform $T$ of the $t$-axis.
Since
\[
1 = \ord_{\~\a_0}(E) = \ord_t(\~\a_0^*E) \ge \ord_t(\ff^*G) \. \ord_G(\f^*E), 
\]
we conclude that $\ord_G(\f^*E)=1$ (see the discussion leading to \cite[(5.5)]{dFD16}). 
This proves~(5) and hence completes the proof of the theorem.
\end{proof}

We conclude this section with a brief discussion of the original
proof of Theorem~\ref{t:Nash-dim=2}, referring the reader to the original papers \cite{FdB12,FdBPP12}
and the survey \cite{PS15} for more rigorous and detailed proofs. 

The first step is to reduce to the case where $k = \C$ and $X$ is normal. 
Once in this situation, let $f \colon Y \to X$ be the minimal resolution. 
For simplicity, we assume that the exceptional locus of $f$ is a divisor 
with simple normal crossings. The proof of the general case is similar but it requires
an argument on local deformation to the Milnor fiber which we prefer to omit here.

As usual, one assumes by contradiction that there are two exceptional divisors $E$ and $F$
on $Y$ such that $C_X(E) \subset C_X(F)$. 
Like in the algebraic proof we gave above, the curve selection lemma
yields a map $\Phi \colon S = \Spec \C[[s,t]] \to X$ with the properties listed in the proof. 
Such a map is called a \emph{formal wedge}. By the results of \cite{FdB12}
which rely on Popescu's approximation theorem, 
one can replace $\Phi$ with a \emph{convergent wedge}, and hence assume without
loss of generality that $S \subset \C^2$ is a small open neighborhood of 
the origin.\footnote{A rigorous discussion of what follows 
requires working with Milnor representatives of $X$ and the wedge.}   

Fix a sufficiently small $\ep > 0$, and let $\DD_\ep = \{ t \in \C \mid |t| < \ep \}$.
For every $s \in \C$ with $|s| < \ep$, we have a holomorphic map 
\[
\a_s \colon \DD_\ep \to X, \quad \a_s(t) := \Phi(s,t).
\]
The image of this map is not contained in $X_\reg$ and lifts uniquely
to a holomorphic map $\~\a_s \colon \DD_\ep \to Y$. Let $D_s \subset Y$ denote the image
of $\~\a_s$. Since $D_0$ is the support of a small curve whose germ at the point of contact
with $E$ is a smooth arc, it is homeomorphic to an open disk. 
One deduces from this that if $s$ is sufficietly small then  $D_s$ is homeomorphic 
to an open disk.

As $s$ approaches $0$, $D_s$ degenerates to a cycle 
\[
D_0 + \sum a_i E_i
\]
supported within the union of $D_0$ and the exceptional divisor $\Ex(f) = \sum E_i$.
Let $\G = \Supp (D_0 + \sum a_i E_i)$. Let $I$ be the union of $\{0\}$
with the index set of the components $E_i$
appearing in $\G$, and let $J$ be the index set for the singular points $p_j$ of $\G$. 
Note that $E = E_i$ for some $i \in I$, say for $i = 1$. 
Suppose that $0 \in J$ is the index such that $p_0$ is the point of intersection 
of $D_0$ with $E_1$. 

For every $j \in J$, let $B_j \subset Y$ be a small ball around $p_j$, and for every
$i \in I$, let $T_i$ be a small tubolar neighborhood of $D_0$ if $i = 0$, and of $E_i$ if $i \ne 0$. 
We assume that the sectional radius of $T_i$ is chosen sufficiently small with respect to 
the radius of the balls $B_j$ 
so that the boundary of $T_i$ intersects the boundary of $B_j$ transversally and all such 
intersections are disjoint. Let $T^\o_i$, $D_0^\o$, and $E_i^\o$ denote the restrictions of
$T_i$, $D_0$, and $E_i$ to the complement of $\bigcup B_j$. 
Fix $s$ with $0 < |s| \ll 1$ so that 
$D_s$ is contained in $(\bigcup T_i) \cup (\bigcup B_j)$. We assume that $D_s$ intersects transversally 
the boundary of each $B_j$. We have
\[
\chi(D_s) = \sum \chi(D_s \cap T^\o_i) + \sum \chi(D_s \cap B_j),
\]
where $\chi$ is the Euler--Poincar\'e characteristic.

For $i = 0$, we have $\chi(D_s \cap T^\o_0) = \chi(D_0^\o) = 0$, and for $i \ne 0$
we have
\[
\chi(D_s \cap T^\o_i) \le a_i\,\chi(E^\o_i) 
\]
by Hurwitz formula.\footnote{Here we
are implicitly using that the boundaries of $D_s \cap T^\o_i$ and $D_i^\o$ are 
unions of circles, and hence they can be added in without altering the computation.}
To bound $\chi(D_s \cap B_j)$, we observe that $D_s \cap B_j$ is a union of
disjoint orientable surfaces with boundary. Those homeomorphic to the disk are the
only components contributing positively to the characteristic, and each such component
must intersect $\G$ at some point in $B_j$. It follows that
\[
\chi(D_s \cap B_j) \le \sum_{p \in B_j} i_p(D_s,\G),
\]
where we denote by $i_p$ the intersection multiplicity at a point $p$
(see \cite[Lemma~7]{FdBPP12} for more details). 
For $j = 0$, this estimate can be improved. Indeed, $D_s \cap B_0$
must have at least one connected component whose boundary is the union of at least two
circles, one contained in $T_0 \cap B_0$ and the other contained in $T_1 \cap B_0$. 
Such component intersects both branches of $\G \cap B_j$ and does not contribute
positively to the characteristic. 
This implies that 
\[
\chi(D_s \cap B_0) \le -2 + \sum_{p \in B_0} i_p(D_s,\G)
\]
(we refer to the discussion leading to \cite[(12)]{FdBPP12} for more details).
Putting everything together and suitably rearranging the terms, one gets
\[
\chi(D_s) = \sum \chi(D_s \cap T^\o_i) + \sum \chi(D_s \cap B_j) \le \sum a_i(2-2g(E_i) + E_i^2).
\]
By the adjunction formula, the right-hand side is equal to $-K_Y\.\sum a_i E_i$. 
As $K_Y$ is nef over $X$ ($f$ being the minimal resolution)
and $\sum a_i E_i$ is $f$-exceptional, this number is $\le 0$.
Since, on the other hand, $D_s$ is homeomorphic to the unit disk and hence $\chi(D_s) = 1$, 
we get a contradiction.

\section{Higher dimensions}
\label{s:higher-dim}

Moving on to higher dimensional singularities, 
it becomes less clear which exceptional divisors should correspond to Nash valuations.
The reason is that in dimension $\ge 3$ there is no minimal resolution available to 
determine a natural set of candidates. In fact, some varieties
may have small resolutions, which extract no divisors at all. 
With this in mind, Nash proposed to consider the following set of valuations. 

Throughout this section, let $X$ be a variety of positive dimension defined over 
an algebraically closed field of characteristic zero.

\begin{definition}
\label{d:essential}
An \emph{essential valuation} of $X$ is a divisorial valuation whose center on every resolution
of singularities $f \colon Y \to X$ is an irreducible component of $f^{-1}(X_\sing)$.
\end{definition}

\begin{proposition}[Nash \protect{\cite[Corollary]{Nas95}}]
\label{p:Nash-implies-essential}
Every Nash valuation of $X$ is essential.
\end{proposition}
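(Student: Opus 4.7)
The plan is to reduce everything to Theorem~\ref{t:Nash} applied, not to one specific resolution, but to an arbitrary one: once we have shown that each Nash valuation is realized by a component of $f^{-1}(X_\sing)$ on every resolution $f$, the definition of essential valuation will be satisfied for free.

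More precisely, let $v$ be a Nash valuation, so that $v = \val_C$ for some irreducible component $C$ of $\p_X^{-1}(X_\sing)$. Fix an arbitrary resolution of singularities $f' \colon Y' \to X$, and let $E'_1,\dots,E'_r$ be the irreducible components of $f'^{-1}(X_\sing)$. The first step is to apply Theorem~\ref{t:Nash} using the resolution $f'$: this yields
\[
\p_X^{-1}(X_\sing)_\red = \bigcup_{j} C_X(E'_j),
\]
where the union may be taken over all indices $j \in \{1,\dots,r\}$ (the ones producing non-maximal sets are simply absorbed). Because $C$ is irreducible and contained in this finite union of closed sets, there must exist some $j$ with $C \subset C_X(E'_j)$. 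The maximality of $C$ as an irreducible component of $\p_X^{-1}(X_\sing)$, together with the fact that $C_X(E'_j) \subset \p_X^{-1}(X_\sing)$ is irreducible (being the closure of the image under $f'_\infty$ of the irreducible set $\p_{Y'}^{-1}(E'_j)$), forces $C = C_X(E'_j)$.

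The second step is to read off the center. Since $C = C_X(E'_j)$, the valuations agree: $v = \val_C = \val_{E'_j}$. Because $Y'$ is smooth and $E'_j$ is a prime divisor on $Y'$, the center of $\val_{E'_j}$ on $Y'$ is the generic point of $E'_j$, whose closure is $E'_j$ itself. Thus $c_{Y'}(v) = E'_j$, which by construction is an irreducible component of $f'^{-1}(X_\sing)$. Since $f'$ was arbitrary, $v$ is essential.

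I do not anticipate any genuine obstacle: the proposition is really a direct unpacking of Theorem~\ref{t:Nash}. The only subtlety worth stating explicitly is that the argument uses Theorem~\ref{t:Nash} applied to each candidate resolution $f'$ rather than to a single fixed one, so that the index set $I$ of components of $f'^{-1}(X_\sing)$ in that theorem varies with $f'$; the Nash valuations, however, are intrinsically defined on $X_\infty$ and therefore must appear in each such decomposition.
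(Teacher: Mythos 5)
Your proof is correct and follows essentially the same route as the paper's: apply the decomposition of Theorem~\ref{t:Nash} to an arbitrary resolution $f'$, use irreducibility and maximality of $C$ to conclude $C = \ov{f'_\infty(\p_{Y'}^{-1}(E'_j))}$ for some component $E'_j$ of $f'^{-1}(X_\sing)$, and read off that the center of $\val_C$ on $Y'$ is $E'_j$. One small imprecision: a component $E'_j$ of $f'^{-1}(X_\sing)$ need not be a prime divisor on $Y'$ (e.g.\ for a small resolution), so the step ``$v=\val_{E'_j}$ and its center is the generic point of the divisor $E'_j$'' should instead be justified as in the paper, by noting that the generic point of $C$ is the image of the generic point of $\p_{Y'}^{-1}(E'_j)$, whence its center on $Y'$ is $E'_j$ regardless of codimension.
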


\begin{proof}
Let $\val_C$ be the Nash valuation associated to an irreducible component of $\p_X^{-1}(X_\sing)$. 
We already know that $\val_C$ is a divisorial valuation. 
Let $f \colon Y \to X$ be an arbitrary resolution of singularities.
As we argued in the proof of Theorem~\ref{t:Nash}, there is an irreducible component 
$E$ of $f^{-1}(X_\sing)$ such that 
\[
C = \ov{f_\infty(\p_Y^{-1}(E))}.
\]
In other words, the generic point of $C$ is the image of the generic point of $\p_Y^{-1}(E)$.
This implies that the center of $\val_C$ in $Y$ is $E$.
Since $f$ is an arbitrary resolution, we conclude that $\val_C$ is an essential valuation.
\end{proof}

\begin{definition}
After identifying the irreducible components of $\p_X^{-1}(X_\sing)$ with the valuations they define, 
the inclusion of the set of Nash valuations into the set of essential valuations
is known as the \emph{Nash map}.
\end{definition}

Nash asked whether the Nash map is surjective, that is, whether the property of being essential
characterizes Nash valuations. This question became known as the \emph{Nash problem}. 

Theorem~\ref{t:Nash-dim=2} states that this is the case 
in dimension two. However, after years of speculation, this turned out to be false in general:
counter-examples where first found in dimensions $\ge 4$ \cite{IK03}, 
and later in dimension 3 as well \cite{dF13}. 
A larger class of counter-examples
showing that this phenomenon is actually quite common and not limited to few sporadic examples
was finally produced in \cite{JK13}. 

In \cite{JK13}, Nash valuations of a ${\bf cA}$-type singularity
$X = (xy = f(z_1,\dots,z_n)) \subset \A^{n+2}$ (where $\mult(f) \ge 2$)
are completely determined, and essential valuations are characterized 
when $\mult(f) = 2$. A special case of their result, stated next, shows that 
the Nash map is not surjective about half of the times for 3-dimensional ${\bf cA}_1$ singularities.

\begin{theorem}[Johnson and Koll\'ar \protect{\cite[Theorem~1 and Proposition~9]{JK13}}]
\label{t:JK}
For $m \ge 3$, the singular threefold $X = (xy = z^2-w^m) \subset \A^4$ has one Nash valuation,
and the number of essential valuations of $X$ is one if $m$ is even or $m=3$, 
and two if $m$ is odd $\ge 5$.
\end{theorem}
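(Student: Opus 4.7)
The plan is to treat the two counts separately. For Nash valuations, by Theorem~\ref{t:Nash} it suffices to show that $\p_X^{-1}(O)$ is irreducible, where $O$ is the unique singular point. An arc through $O$ is a quadruple $(x(t), y(t), z(t), w(t)) \in (tk[[t]])^4$ satisfying $xy = z^2 - w^m$. The generic stratum $U$ consists of arcs with all four coordinates of order exactly one in $t$; writing $(x, y, z, w) = (\alpha, \beta, \gamma, \delta) \cdot t + O(t^2)$, the $t^2$ coefficient of the equation imposes only $\alpha\beta = \gamma^2$, since the $w^m$ term contributes in degree $\ge m \ge 3$. This leading-order locus is the irreducible product of the quadric cone $\{\alpha\beta = \gamma^2\}$ with an affine line for $\delta$, and a Hensel-type lifting shows that the higher-order coefficients fiber irreducibly over it, so $U$ is irreducible. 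Any arc with some coordinate of higher order, or with $w \equiv 0$, lies in $\overline{U}$: for example, deforming $w(t) \equiv 0$ to $s \cdot w_1(t)$ and adjusting $y, z$ by formal power series in $s$ solves the constraint via Hensel's lemma and produces a $1$-parameter family through $O$ whose generic member lies in $U$ (compare the argument of Theorem~\ref{t:CSL}). Hence $\p_X^{-1}(O)$ is irreducible and there is exactly one Nash valuation for every $m \ge 3$.

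For essential valuations, I would build canonical reference resolutions by iterated blowing up of the singular point. In the chart of the blowup of $O$ where $w$ is the distinguished coordinate (setting $x = w\tilde x$, $y = w\tilde y$, $z = w\tilde z$), the proper transform of $X$ is $\tilde x \tilde y = \tilde z^2 - w^{m-2}$, a $cA$-type singularity of the same form with $m$ replaced by $m - 2$. Iterating produces a chain of extracted divisors $E_1, \ldots, E_N$ terminating, when $m$ is odd, at a smooth proper transform (residual equation $\tilde x \tilde y = \tilde z^2 - w$), and when $m$ is even, at an ordinary double point $\tilde x \tilde y = (\tilde z - w)(\tilde z + w)$ admitting a divisor-free small resolution. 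Against these reference resolutions, essentiality is checked case by case. When $m = 3$, one blowup resolves $X$ and extracts only $E_1$, yielding a single essential valuation. When $m$ is even, the small resolution of the terminal ODP produces a resolution in which the centers of the intermediate divisors $E_j$ ($j \ge 2$) are points of the exceptional $\P^1$ rather than divisors, so only $E_1$ remains essential. When $m$ is odd and $\ge 5$, one identifies a second essential valuation $v_\star$ coming from a weighted blowup of $X$ associated to an extremal face of the Newton polytope of $xy - z^2 + w^m$ involving $z^2$ and $w^m$; both $E_1$ and $v_\star$ are shown to be extracted on every resolution, while each intermediate $E_j$ for $2 \le j \le N - 1$ is made non-essential by replacing a segment of the blowup chain with a weighted blowup in a different order, causing its center to drop in codimension.

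The main obstacle is the odd $m \ge 5$ case, which requires showing \emph{exactly} two valuations are essential. Both halves of the count are delicate: proving essentiality of $E_1$ and $v_\star$ requires showing that every resolution extracts both, carried out by a Newton-polytope analysis of admissible weighted blowups of $X$ centered at $O$; proving non-essentiality of each intermediate $E_j$ requires explicitly constructing an alternative resolution on which its center fails to be a divisor, and the combinatorial bookkeeping of these alternatives (permuting blowup orders, inserting weighted blowups) is the central difficulty. The asymmetry between the $z^2$ and $w^m$ terms for odd $m \ge 5$ is what forces the second essential valuation; this asymmetry collapses for $m = 3$ (the $w^m$ face is resolved by the first blowup itself) and for $m$ even (the terminal ODP admits a small resolution contracting the second extremal divisor), explaining the drop to a single essential valuation in those regimes.
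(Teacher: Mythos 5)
Your proposal has genuine gaps in both halves. For the count of Nash valuations, the entire content of the statement is the claim that every arc through $O$ lies in the closure of the stratum where all four coordinates have order one, and your justification of this does not work. The components at risk are the sets $C_X(E_j)$, $j\ge 2$, attached to the later divisors in the blow-up chain; their generic arcs have $\ord(w)=1$ but $\ord(x)=\ord(y)=\ord(z)\ge 2$, so the degeneration you discuss ($w\equiv 0$) is not the relevant one. More seriously, ``adjusting $y,z$ by power series in $s$ via Hensel's lemma'' is precisely the kind of wedge construction that fails in general---compare Example~\ref{eg:1}, where a smooth arc in the singular locus admits no such deformation---and producing such wedges is the hard direction of the Nash problem, not a routine lifting. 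The paper's own proof (carried out only for $m=5$, with \cite{JK13} cited for the general case) establishes irreducibility of $\pi_X^{-1}(O)$ by brute force: it inverts $w_1$ and $x_2$, eliminates the variables $w_j$ for $j\ge 2$ using the equations $\ov h_i=0$, $i\ge 6$, and verifies that the resulting ring is a domain; since the open set so obtained meets both $C_X(F)$ and $C_X(G)$ and $C_X(F)\not\subset C_X(G)$, this forces $C_X(G)\subset C_X(F)$.

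For the essential valuations, your even-case argument is wrong as stated: the resolution obtained by running the full blow-up chain and then small-resolving the terminal ordinary double point still extracts every $E_j$ as a divisor, so it cannot witness the non-essentiality of any of them. What does work is to truncate the chain: already after one blow-up the singularity is $x_1y_1=z_1^2-w^{m-2}$ with $m-2$ even, which admits a small resolution directly (blow up the Weil divisor $(x_1,\,z_1-w^{(m-2)/2})$); on the composite resolution the centers of $E_2,E_3,\dots$ lie in the small exceptional curve, which is contained in the proper transform of $E_1$ and hence is not a component of the fiber over $O$. (Note also that the center of such an $E_j$ on a small resolution can be the entire exceptional $\P^1$ rather than a point of it; what matters is that this $\P^1$ is not a component.) In the odd case you give no argument for the essentiality of the two distinguished divisors, nor for the non-essentiality of the intermediate ones when $m\ge 7$, which is the delicate half of the count. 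The paper's mechanism for $m=5$ is a discrepancy computation ($k_F(X)=1$, $k_G(X)=2$, $X$ terminal) combined with factoriality of $X$ and of $Y$: a resolution on which the center of $G$ is not a component of the fiber is shown to factor through $Y$ near that center and to force a codimension-two exceptional component over the factorial variety $Y$, a contradiction. Factoriality of $X$---which holds for odd $m$ and fails for even $m$, where $z^2-w^m$ factors---is the actual source of the even/odd dichotomy, and any complete proof has to use it rather than a heuristic Newton-polytope asymmetry.
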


We extract from this result the case $m=5$, which gives the simplest counter-example
to the Nash problem. We review the proof of this case. The proof of Lemma~\ref{l:1}, 
which gives the count of Nash valuations for this example, formalizes the type of discussion
given in Example~\ref{eg:A2-sing} based on localization and elimination of variables, 
and is inspired by some computations we learned from Ana Reguera.

\begin{corollary}
\label{c:JK-m=5}
The Nash map is not surjective for $X = (xy=z^2-w^5) \subset \A^4$.
\end{corollary}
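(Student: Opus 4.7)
The corollary is an immediate consequence of Theorem~\ref{t:JK} applied with $m = 5$: since $m$ is odd and $\ge 5$, that theorem gives exactly one Nash valuation and exactly two essential valuations on $X$, so the inclusion of Nash valuations into essential valuations (the Nash map) cannot be surjective. My plan would therefore be to record this one-line deduction and then review the two counts separately for the specific equation $xy = z^2 - w^5$, since the author signals that a direct verification of the Nash count is coming in the next lemma.

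For the single Nash valuation, I would analyze $\p_X^{-1}(O)$ directly, where $O \in X$ is the origin (the full singular locus of $X$). Writing an arc $\a(t) = (x(t),y(t),z(t),w(t))$ through $O$ and denoting the orders $(a,b,c,d) = (\ord x, \ord y, \ord z, \ord w)$, substitution into the defining equation $x(t)y(t) = z(t)^2 - w(t)^5$ and coefficient comparison would lead to a case split according to $2c < 5d$, $2c = 5d$, and $2c > 5d$. Following the template of Example~\ref{eg:A2-sing}, in each stratum I would localize at a nonvanishing leading coefficient and solve recursively for the remaining coefficients in terms of free parameters. This order-stratification argument, formalized in the upcoming Lemma~\ref{l:1}, identifies a unique stratum that yields a fat irreducible component of $\p_X^{-1}(O)$, the other strata giving either thin sets or sets lying in the closure of that component, and hence exactly one Nash valuation.

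For the count of two essential valuations, the plan is to exhibit two divisorial valuations $\val_{E_1}$ and $\val_{E_2}$ over $X$ whose centers on every resolution of $X$ are irreducible components of the preimage of $O$. These arise naturally from the two small modifications of $X$ that resolve the cuspidal discriminant $\{z^2 = w^5\}$ in non-equivalent ways (the parity of $m$ being crucial here, by Theorem~\ref{t:JK}); on a big resolution that dominates both small modifications, one obtains a pair of divisors $E_1, E_2$ over $O$ whose essentiality can be verified by tracking their centers on any other resolution. The main obstacle I anticipate is precisely this essentiality verification: unlike the surface case of Theorem~\ref{t:Nash-dim=2}, in dimension three there is no minimal resolution to which every other resolution dominates, so one cannot simply read off essential divisors from a preferred model and must instead carry out a comparison across several competing birational models of $X$.
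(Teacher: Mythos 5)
Your opening deduction is correct and is exactly how the paper frames the corollary: Theorem~\ref{t:JK} with $m=5$ gives one Nash valuation and two essential valuations, so the Nash map cannot be surjective. The paper then chooses to re-prove both counts for this specific example, via a resolution obtained by two successive point blow-ups (first of $O\in X$, producing a threefold $Y$ with one further singular point $P$, then of $P\in Y$), yielding two exceptional divisors $F$ and $G$; your sketch of the Nash count (stratify $\p_X^{-1}(O)$ by orders, localize at a nonvanishing leading coefficient, eliminate variables) is in the same spirit as Lemma~\ref{l:1}, though the actual irreducibility argument there is more delicate than ``each stratum lies in the closure of one component.''

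The genuine problem is your account of where the two essential valuations come from. You propose to extract them from ``the two small modifications of $X$ that resolve the cuspidal discriminant $\{z^2=w^5\}$ in non-equivalent ways.'' For $m$ odd the polynomial $z^2-w^m$ is irreducible, $X$ is factorial, and $X$ admits \emph{no} small modifications at all; small resolutions exist precisely in the even case $z^2-w^m=(z-w^{m/2})(z+w^{m/2})$, which is the case where the essential count drops to \emph{one}. So the mechanism you describe runs backwards: factoriality is not an incidental feature but the engine of the essentiality proof. In the paper, $\val_F$ is essential because it is Nash (Proposition~\ref{p:Nash-implies-essential} plus Lemma~\ref{l:1}), and $\val_G$ is shown essential by contradiction: if its center $C$ on some resolution $\m\colon W\to X$ were not a component of $\m^{-1}(O)$, then a discrepancy computation ($k_F(X)=1$, $k_G(X)=2$, $X$ terminal and factorial, so $\Ex(\m)$ is a divisor and $K_{W/X}\ge\Ex(\m)$) forces $C$ to lie on the proper transform of $F$, whence $\f\colon W\rat Y$ is defined near $C$ and contracts $C$ to $P$, producing a codimension-two exceptional component over the factorial threefold $Y$ --- a contradiction. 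Without this (or an equivalent) argument, the ``two essential valuations'' half of your plan has no proof, as you yourself acknowledge; and the route you indicate for finding the candidate divisors would not even get off the ground.
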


\begin{proof}
A resolution of $X$ can be obtained by taking two blow-ups. 
The blow-up $f \colon Y \to X$ of the origin $O$ produces a model
with an isolated singularity $P \in Y$ whose tangent cone is 
the affine cone over a (singular) quadric surface. Blowing up
the point $P$ gives a resolution $g \colon Z \to Y$ of $Y$, 
and hence of $X$. Let $F \subset Y$ be the exceptional divisor of $f$
and $G \subset Z$ the exceptional divisor of $g$. 

Since $Z \to X$ is a resolution which only extracts two divisors, it follows 
that there are at most two essential valuations. Moreover, we have
\[
\p_X^{-1}(O)_\red = C_X(F) \cup C_X(G),
\]
and hence there are at most two Nash valuations. 
The precise count of Nash valuations and essential valuations
is given in the next two lemmas which, combined, yield the corollary.
\end{proof}

\begin{lemma}
\label{l:1}
With the above notation, we have $\p_X^{-1}(O)_\red = C_X(F)$, and 
therefore $\val_F$ is the only Nash valuation of $X$.
\end{lemma}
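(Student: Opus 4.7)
My plan is to show directly that $\p_X^{-1}(O)_\red$ is irreducible and equals $C_X(F)$, by a Hasse--Schmidt analysis of the arc space in the spirit of Example~\ref{eg:A2-sing} and the computations attributed to Reguera. By Theorem~\ref{t:Nash} applied to the resolution $f \circ g \colon Z \to X$, which has exactly two prime divisors over $O$ (the strict transform of $F$ and the new exceptional $G$), the fiber decomposes as
\[
\p_X^{-1}(O)_\red = C_X(F) \cup C_X(G).
\]
It therefore suffices to prove $C_X(G) \subseteq C_X(F)$, so that $\val_F$ is the unique Nash valuation.

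To carry out the analysis, I would embed $X = (xy - z^2 + w^5 = 0) \subset \A^4$ and coordinatize $(\A^4)_\infty = \Spec k[a_i, b_i, c_i, d_i]_{i\ge 0}$ via Hasse--Schmidt derivations, so that $\p_X^{-1}(O)$ is cut out by $a_0 = b_0 = c_0 = d_0 = 0$ together with $f_n := D_n(xy - z^2 + w^5) = 0$ for $n \ge 2$. The first nontrivial relation
\[
f_2 = a_1 b_1 - c_1^2
\]
defines an affine quadric cone in $\A^3_{(a_1,b_1,c_1)}$, and is \emph{irreducible}; this is the key geometric difference from the $A_2$ case $xy = z^3$ in Example~\ref{eg:A2-sing}, where the corresponding first equation $a_1 b_1 = 0$ splits and produces two Nash components. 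On the open set $V := \{b_1 \ne 0\} \cap \p_X^{-1}(O)$, I would observe that for each $n \ge 1$ the equation $f_{n+1} = 0$ determines $a_n$ linearly (with coefficient $b_1$) in terms of variables already handled, so inductively
\[
V \;\cong\; \Spec k[b_i, c_i, d_i]_{i \ge 1}\bigl[b_1^{-1}\bigr],
\]
an irreducible infinite-dimensional affine variety. Since the generic arc of $C_X(F)$ satisfies $\val_\alpha(y) = 1$ and hence $b_1 \ne 0$, this $V$ lies in $C_X(F)$ and is dense there; the symmetric open $V' := \{a_1 \ne 0\} \cap \p_X^{-1}(O)$ is irreducible by the same argument, and $V \cap V'$ is nonempty (any arc with $a_1 b_1 \ne 0$ and $c_1^2 = a_1 b_1$ lies in it), so $W := V \cup V' \subseteq C_X(F)$ is irreducible.

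The main obstacle, and the heart of the proof, is to show that $W$ is dense in $\p_X^{-1}(O)_\red$. The complement $\p_X^{-1}(O)_\red \setminus W$ lies in $\{a_1 = b_1 = 0\}$, which combined with $f_2 = 0$ forces $c_1 = 0$---this is exactly the vertex of the quadric cone. The generic arc of $C_X(G)$ lives in this vertex locus (with $\val_\alpha(x) = \val_\alpha(y) = \val_\alpha(z) = 2$ and $\val_\alpha(w) = 1$), and the next leading relation $f_4 = a_2 b_2 - c_2^2$ is again an irreducible quadric cone in $(a_2, b_2, c_2)$, mirroring the structure at the top level. To conclude, I would produce, for every $k$-point $\alpha$ in the vertex locus, an explicit formal $1$-parameter deformation $\{\alpha_s\}$ inside $X_\infty$ with $\alpha_0 = \alpha$ and $\alpha_s \in W$ for $s \ne 0$, so that $\alpha \in \ov{W} \subseteq C_X(F)$. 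The hard part is that the deformation must be defined over $k[[s,t]]$: a naive perturbation of a single coordinate (say replacing $x(t)$ by $x(t) + st$) forces the remaining coordinates, when solved from $xy = z^2 - w^5$, to acquire uncancellable negative powers of $s$, so the resulting family is only defined over $k(\!(s)\!)[[t]]$. The Reguera-style idea is to perform simultaneous corrections to all four coordinates, guided by the two-level quadric-cone structure, so as to preserve the defining equation integrally in $s$; once such a deformation is in hand, $W$ is dense and $\p_X^{-1}(O)_\red = \ov{W} = C_X(F)$, giving the unique Nash valuation $\val_F$.
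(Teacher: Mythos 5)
Your setup and first half are sound and run parallel to the paper's: reduce the lemma to the irreducibility of $\p_X^{-1}(O)_\red$, coordinatize via Hasse--Schmidt derivations, and exploit the fact that the top-index variable occurs linearly with a unit coefficient in each derived equation to eliminate variables over a suitable localization. Your identification of $V=\{b_1\neq 0\}$ with a localized infinite-dimensional affine space, hence irreducible and contained in $C_X(F)$, is correct. The genuine gap is exactly where you place "the heart of the proof": you never establish that the vertex locus $\{a_1=b_1=c_1=0\}$, which contains the generic point of $C_X(G)$, lies in $\ov W$. The wedge construction you sketch is not carried out, and it is not a routine verification: producing a $k[[s,t]]$-valued deformation of the generic arc of $C_X(G)$ into $C_X(F)$ is essentially equivalent to the adjacency $C_X(G)\subset C_X(F)$ you are trying to prove, and Theorem~\ref{t:JK} shows that for the nearby examples $xy=z^2-w^m$ with $m$ odd $\ge 7$ the analogous statements are exactly where the subtlety of the Nash problem lives. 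Gesturing at "simultaneous corrections guided by the two-level cone structure" does not close this.

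The paper avoids the issue by localizing differently: it inverts $w_1$ and $x_2$ rather than $y_1$, so the resulting open set $U$ contains the generic points of \emph{both} $C_X(F)$ and $C_X(G)$ (the latter has $x_1=0$ but $x_2\neq 0$ and $w_1\neq 0$). Inverting $w_1$ allows one to eliminate $w_j$ for $j\ge 2$ using $\ov h_i=0$ for $i\ge 6$, where $w_{i-4}$ appears linearly with coefficient $5w_1^4$; this leaves only the four equations $\ov h_2,\dots,\ov h_5$ in finitely many relevant variables, and irreducibility of $U$ reduces to a finite-dimensional verification that the ring $S$ is a domain (checked via $S_{x_1}$ and $S/(x_1)$). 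Since an irreducible open set meeting both $C_X(F)$ and $C_X(G)$ forces $\p_X^{-1}(O)_\red$ to be irreducible, no wedge or deformation argument is needed. To salvage your route you would have to either exhibit the deformation explicitly or, more simply, change your localization so that the generic point of $C_X(G)$ is not discarded.
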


\begin{proof}
First note that $C_X(F) \not\subset C_X(G)$ because every arc $\g \in C_X(G)$
has $\ord_\g(x) \ge 2$, whereas if $\a \in C_X(F)$ is the generic point then $\ord_\a(x) = 1$.
Therefore the statement is equivalent to showing that $\p_X^{-1}(O)$ is irreducible. 
This set is defined by the vanishing of the derivations of the polynomial
\[
h(x,y,z,w) = xy - z^2 + w^5
\]
and the pull-back of the maximal ideal. Explicitly, 
let $(x_i,y_i,z_i,w_i)_{i \ge 0}$ denote the coordinates of $(\A^4)_\infty$
defined by setting 
$x_i = D_i(x)$, $y_i= D_i(y)$, $z_i = D_i(z)$, and $w_i= D_i(w)$,
where $D_i$ are the Hasse--Schmidt derivations. 
The fiber $\p_X^{-1}(O)$ is defined in $(\A^4)_\infty$ by the equations
\[
x_0 = y_0 = z_0 = w_0 = 0 \and h_i := D^i(h(x_0,y_0,z_0,w_0)) = 0 \for i \ge 0. 
\]
Let $\ov h_i$ denote the polynomial $h_i$ once we set $x_0 = y_0 = z_0 = w_0 = 0$.
Note that $\ov h_0$ and $\ov h_1$ vanish identically. 

Let $U \subset \p^{-1}_X(O)$ be the open set obtained by
inverting $w_1$ and $x_2$. 
Using the equations $\ov h_i = 0$, for $i \ge 6$, to eliminate
the variables $w_j$ for $j \ge 2$, we see that $U = \Spec R$ where
\[
R = \big(k[w_1][x_i,y_i,z_i]_{i \ge 1}/(\ov h_2,\dots,\ov h_5)\big)_{w_1x_2}.
\]
We claim that $R$ is a domain. 
Since the polynomials $\ov h_2,\dots,\ov h_5$ do not depend on the variables $x_i,y_i,z_i$
for $i \ge 5$, it suffices to show that 
\[
S = \big(k[w_1][x_i,y_i,z_i]_{1 \le i \le 4}/(\ov h_2,\dots,\ov h_5)\big)_{w_1x_2}
\]
is a domain. It can be checked that $S_{x_1}$ is a domain of dimension 9 and $S/(x_1)$ 
is a domain of dimension 8. From this it follows that $S$, and hence $R$, are domains, 
and therefore $U$ is irreducible. 
Since $U$ has nonempty intersection
with both $C_X(F)$ and $C_X(G)$, we conclude that $\p_X^{-1}(O)$ is irreducible. 
\end{proof}

\begin{lemma}
With the above notation, both $\val_F$ and $\val_G$ are essential valuations of $X$, 
and therefore $X$ has two essential valuations.
\end{lemma}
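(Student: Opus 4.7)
The essentiality of $\val_F$ is immediate from Lemma~\ref{l:1}, which identifies $\val_F$ as the unique Nash valuation of $X$, combined with Proposition~\ref{p:Nash-implies-essential}.

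For $\val_G$, since $X_\sing=\{O\}$ and $z^2-w^5$ is irreducible in $k[[w]][z]$ (here the oddness of $5$ is essential), $X$ admits no small resolution, and the exceptional locus of any resolution $h\colon W\to X$ over $O$ is purely divisorial. It therefore suffices to prove that every resolution extracts a prime divisor whose associated valuation is $\val_G$.

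The strategy is to reduce to an essentiality statement for the germ $(Y,P)$, which is isomorphic to the germ at the origin of $X_3=(xy=z^2-w^3)\subset\A^4$. A direct verification in coordinate charts shows that $Z=\Bl_PY$ is smooth, so $g$ is a resolution of $(Y,P)$ with unique exceptional divisor realizing $\val_G$. Granting essentiality of $\val_G$ for $(Y,P)$, the reduction proceeds as follows: for an arbitrary resolution $h\colon W\to X$, the essentiality of $\val_F$ produces the divisor $F_W\subset W$ with $\val_{F_W}=\val_F$; this forces the birational map $W\dashrightarrow Y$ over $X$ to extend to a morphism $g_W\colon U\to Y$ on an open set $U\subset W$ covering a Zariski neighborhood of $P$ (the extension follows from normality of $Y$ and a local analysis at the generic point of $F_W$). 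The restriction $g_W$ is then a partial resolution of $(Y,P)$, and by the assumed essentiality of $\val_G$ on $(Y,P)$ it must extract a divisor with valuation $\val_G$; such a divisor lives inside $U\subset W$ and is therefore already a prime divisor of $W$ realizing $\val_G$.

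The principal obstacle is the assumed essentiality of $\val_G$ on $(Y,P)$, which is precisely the $m=3$ instance of Theorem~\ref{t:JK} applied to this germ. Establishing this statement on its own requires classifying divisorial valuations of the $cA$-germ $(xy=z^2-w^3,0)$ centered at the origin and verifying that every resolution must extract $\val_G$ specifically, rather than some other divisorial valuation. One approach uses the Johnson--Koll\'ar combinatorial description of divisors over $cA$-type threefold singularities---associating each divisorial valuation to a lattice vector in the fan refining the Newton polytope of $z^2-w^3$---and shows that the ray for $\val_G$ is the unique ``compulsory'' one; an alternative is a direct MMP and negativity-lemma analysis on birational modifications of $Z$ over $Y$, ruling out any alternative divisorial valuation that a resolution might extract in place of $\val_G$.
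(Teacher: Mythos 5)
The first half of your argument (essentiality of $\val_F$ via Lemma~\ref{l:1} and Proposition~\ref{p:Nash-implies-essential}) is exactly the paper's, and your identification of the germ $(Y,P)$ with the origin of $(xy=z^2-w^3)$, together with the smoothness of $\Bl_PY$, is correct. The problem is that everything after that is a reduction of the statement to be proved to another statement you do not prove. You explicitly defer the ``essentiality of $\val_G$ over $(Y,P)$'' to the $m=3$ case of Theorem~\ref{t:JK} and then only sketch two possible strategies for it. But the entire content of the lemma is to show that \emph{some} divisorial valuation besides $\val_F$ is essential, and your reduction trades this for the logically equivalent claim one blow-up higher; since the corollary is supposed to be a self-contained verification of the simplest counterexample, outsourcing the crux to the very theorem being illustrated leaves the proof empty. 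Separately, your reduction itself has a gap: essentiality of $\val_F$ gives you a divisor $F_W\subset W$ with $\val_{F_W}=\val_F$, and local principality of $\fm_{X,O}\cdot\O_W$ at the generic point of $F_W$ lets you define $W\dashrightarrow Y$ there, but this does not produce a morphism $g_W\colon U\to Y$ that is \emph{proper} over a neighborhood of $P$. The indeterminacy locus of $W\dashrightarrow Y$ can meet the fiber over $O$ in positive-dimensional sets mapping near $P$, and without properness you cannot invoke any essentiality statement for the germ $(Y,P)$ applied to $g_W$: the fiber $g_W^{-1}(P)$ inside $U$ need not contain the center of every divisor over $(Y,P)$, so nothing forces $\val_G$ to appear as a divisor on $U$.

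For comparison, the paper closes exactly these holes by a direct discrepancy computation rather than a reduction. One computes $k_F(X)=1$ and $k_G(X)=2$, so $X$ is terminal and $F$ is the unique divisor over $O$ with discrepancy one; factoriality of $X$ (this is where the irreducibility of $z^2-w^5$, i.e.\ the oddness of $5$, enters) forces $\Ex(\mu)$ to be a divisor for any resolution $\mu\colon W\to X$, whence $K_{W/X}\ge\Ex(\mu)$ and $k_G(W)\le 1$. Combined with $k_G(W)\ge\codim_W(c_W(G))-1$, this pins down $c_W(G)$ to be a curve lying on a unique exceptional divisor $E$ with $k_E(X)=1$, hence $E$ is the proper transform of $F$. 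Since $\val_E(\fm_{X,O})=\val_G(\fm_{X,O})=1$, the map $W\dashrightarrow Y$ is defined at the generic point of $c_W(G)$, and after further blow-ups away from that curve one obtains a projective morphism to $Y$ contracting a curve to $P$, contradicting factoriality of $Y$. If you want to salvage your approach, you would need to supply a genuine proof of the $m=3$ germ statement (which, by your own reduction one step down, is essentially the same kind of argument) and to control properness of the partial resolution; at that point the paper's direct negativity/discrepancy argument is both shorter and avoids the germ-theoretic bookkeeping.
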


\begin{proof}
The fact that $\val_F$ is essential follows by Lemma~\ref{l:1} and 
Proposition~\ref{p:Nash-implies-essential}. Suppose that $\val_G$ is not essential. Then there is
a resolution $\m \colon W \to X$ such that the center $C = c_W(G)$ is not an irreducible 
component of $\m^{-1}(O)$.
Let $T$ be an irreducible component of $\m^{-1}(O)$ containing $C$. 
We have the commutative diagram
\[
\xymatrix@C=8pt{
& G \ar@{}[r]|\subset & Z \ar[d]_g \ar@{-->}[drr]^\ff &&&& \\
P \ar@{}[r]|\in & F \ar@{}[r]|\subset & Y \ar[d]_f && 
W \ar@{-->}[ll]_\f \ar[dll]^\m & T \ar@{}[l]|\supset & C \ar@{}[l]|\supset \\
& O \ar@{}[r]|\in & X &&&&
}
\]

For short, for any prime divisor $D$ over a $\Q$-Gorenstein variety $V$
we denote by $k_D(V)$ the coefficient 
of $D$ in the relative canonical divisor $K_{V'/V}$ on some (smooth or normal) model $V'$ over $V$
on which $D$ is a divisor, 
and call it the \emph{discrepancy} of $D$ over $V$.\footnote{Here we are a bit sloppy
and identify divisors across different models when they define the same valuation.}

A direct computation shows that $k_F(X)= 1$ and $k_G(X) = 2$. In particular 
$X$ is terminal and $F$ is the only exceptional divisor over $O\in X$ with discrepancy one.

Since $X$ is factorial, the exceptional locus $\Ex(\m)$ of $\m$ is a divisor, 
as otherwise the push-forward 
of a general hyperplane section of $W$ would be a Weil divisor in $X$ that is not Cartier.
Note that $K_{W/X} \ge \Ex(\m)$ because $X$ has terminal singularities. 
This implies that $k_G(W) \le k_G(X) - 1 = 1$. 
Since, on the other hand, $k_G(W) \ge \codim_W(C) - 1 \ge 1$ because $W$ is smooth, we conclude that
$k_G(W) = 1$, $C$ is a 1-dimensional set contained in a unique $\m$-exceptional divisor $E$, and 
$k_E(X) = 1$. 

Since $C \subsetneq T \subset E$, 
we must have $T = E$, and hence $E$ is a divisor with center $O$ in $X$.
Since there is only one exceptional divisor over $X$ with discrepancy one and center $O$, 
we deduce that $E$ is the proper transform of $F$.
Taking into account that $\val_E(\fm_{X,O}) = \val_G(\fm_{X,O}) = 1$, 
we see that $\fm_{X,O} \.\O_W$ is equal to $\O_W(-E)$ 
in a neighborhood of the generic point of $C$, 
and hence it is locally principal there. 
As $f$ is the blow-up of $\fm_{X,O}$,
this means that the map $\f \colon W \rat Y$ is well-defined at the generic point of $C$. 
Replacing $W$ with a higher model without blowing-up near the generic point of $C$, 
we can assume that $\f$ is everywhere well defined and projective.
Note that $\f$ contracts $C$ to the point $P$, since $g(G) = P$.
Then $\f$ is a resolution of $Y$ whose exceptional locus has a 
component of codimension 2.
This contradicts the fact that $Y$ is factorial.
\end{proof}

In the surface case, it is clear that a divisorial valuation is essential if and only
if it is defined by an exceptional divisor on the minimal resolution. 
In the terminology introduced in this section,
Theorem~\ref{t:Nash-dim=2} simply states that the Nash map is surjective in dimension two. 

Even though the Nash map is not always surjective in higher dimensions, 
the result on surfaces still admits a natural generalization to all dimensions. 
This is possible by interpreting the minimal resolution of a surface from 
the point of view of the minimal model program. 
With this in mind, we give the following definition. 

\begin{definition}
A \emph{terminal valuation} of $X$ is a valuation defined by an 
exceptional divisor on a minimal model $f \colon Y \to X$ over $X$. 
\end{definition}

A minimal model $f \colon Y \to X$ over $X$ is, by definition, the outcome
of a minimal model program over $X$ started from any resolution of singularities of $X$.
It is characterized by two
properties: $Y$ has terminal singularities and $K_Y$ is relatively nef over $X$. 
The minimal resolution of a surface is the unique minimal model over it, 
and hence a valuation on a surface is terminal
if and only if it is essential. In higher dimensions, a variety $X$ can admit several 
relative minimal models over itself, but all of them are isomorphic in codimension one,
and therefore the set of terminal valuations of $X$ is determined by the set of
the exceptional divisors of any one of them. 

The following theorem is the natural generalization of Theorem~\ref{t:Nash-dim=2}
to higher dimensions.
The proof is similar to the proof of Theorem~\ref{t:Nash-dim=2} given in this paper
(with some technical adjustments needed to take into account the dimension of $X$), 
so we omit it.

\begin{theorem}[de Fernex and Docampo \protect{\cite[Theorem~1.1]{dFD16}}]
\label{t:dFD}
Every terminal valuation of $X$ is a Nash valuation. 
\end{theorem}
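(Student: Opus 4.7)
The plan is to adapt the algebraic proof of Theorem~\ref{t:Nash-dim=2} given earlier in the paper, substituting a relative minimal model for the minimal resolution and using the relative nefness of $K_Y$ over $X$ wherever the surface argument uses that $K_Y$ is $f$-nef for the minimal resolution. I would assume by contradiction that there is a prime exceptional divisor $E$ on a (fixed, $\Q$-factorial) relative minimal model $f \colon Y \to X$ with $C_X(E) \subsetneq C_X(F)$ for some prime divisor $F$ over $X$. Combining Theorem~\ref{t:CSL} with the specialization argument of \cite[Proposition~2.9]{LJR12} and \cite[Theorem~7.6]{dFD16}, I would produce a formal wedge
\[
\Phi \colon S = \Spec k[[s,t]] \to X
\]
whose special arc $\a_0$ lifts to $Y$ with order of contact one along $E$ at a very general closed point $p \in E$, while the generic arc $\a_\e$ lies in $C_X(F) \setminus C_X(E)$.

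The geometric setup would otherwise be identical to the surface case. I would resolve the indeterminacies of $\~\Phi = f^{-1} \o \Phi$ by a minimal monomial transformation $g \colon Z \to S$, factoring through the normalized blow-up $g' \colon Z' \to S$ of $\Phi^{-1}\fa \cdot \O_S$ where $Y = \Bl_\fa X$; then $h \colon Z \to Z'$ is the minimal resolution of the $\Q$-factorial surface $Z'$, and $\f \colon Z \to Y$ and $\f' \colon Z' \to Y$ are the induced morphisms. Let $G \subset Z$ be the $g$-exceptional component meeting the proper transform of the $t$-axis. Because $\dim Y \ge 3$, the maps $\f$ and $\f'$ are no longer dominant, so I would adopt the formalism of sheaves of special differentials from \cite[Section~4]{dFD16} to define the relative canonical divisors $K_{Z/Y}$ and $K_{Z'/Y}$ and the decomposition $K_{Z'/Y} = K_{Z'/Y}^{\text{$g'$-exc}} + K_{Z'/Y}^{\text{$g'$-hor}}$.

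The heart of the proof is to reestablish the same chain of inequalities
\[
1 \le \ord_G(K_{Z/S}) \le \ord_G(h^*K_{Z'/S}) \le \ord_G(h^*K_{Z'/Y}^{\text{$g'$-exc}}) < \ord_G(\f^*E) = 1,
\]
which yields the sought contradiction. Inequalities (1), (2) and (5) carry over verbatim from the surface proof. For (3) I would reproduce the identity $K_{Z'/S} - K_{Z'/Y}^{\text{$g'$-exc}} \sim (\f')^*K_Y + K_{Z'/Y}^{\text{$g'$-hor}}$: here $(\f')^*K_Y$ is $g'$-nef because $K_Y$ is $f$-nef by minimality of $Y$, and $K_{Z'/Y}^{\text{$g'$-hor}}$ is effective with no $g'$-exceptional components; being both $g'$-nef and $g'$-exceptional, the left-hand side is then anti-effective. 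Inequality (4) is a local Hurwitz-type comparison along the components of $\Ex(g')$ containing $h(G)$, each mapping to a subvariety of $E$ through the very general point $p$.

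The main obstacle is the correct formulation and verification of inequality (3) in this non-dominant, higher-dimensional setting. The required ingredients, namely $\Q$-factoriality of $Y$ (afforded by selecting a $\Q$-factorial terminal minimal model obtained from the MMP run on a resolution) and the theory of special differentials on $Z'$, must be assembled to make sense of $(\f')^*K_Y$, $K_{Z'/Y}$ and its horizontal/exceptional parts, and to confirm that the horizontal part genuinely remains effective even though $\f'$ is not dominant. The auxiliary local Hurwitz computation in step (4) is also subtler than in the surface case because $E$ has higher dimension, so $p$ must be chosen generic enough that $E$ is smooth near $p$ and that the relevant components of $\Ex(g')$ dominate a codimension-one subvariety of $E$ passing through $p$, yielding the strict inequality.
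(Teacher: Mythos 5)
Your proposal is correct and coincides with the approach the paper itself indicates: the text explicitly states that the proof of Theorem~\ref{t:dFD} is the algebraic proof of Theorem~\ref{t:Nash-dim=2} with technical adjustments for the dimension of $X$ (and omits it, deferring to \cite{dFD16}). You have identified exactly the right adjustments --- replacing the minimal resolution by a $\Q$-factorial relative minimal model so that $K_Y$ is $f$-nef in inequality~(3), and invoking the sheaf of special differentials from \cite[Section~4]{dFD16} to make sense of $K_{Z/Y}$ and $K_{Z'/Y}$ now that $\f$ and $\f'$ are no longer dominant --- so the chain of inequalities goes through as in the surface case.
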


One way of thinking about this result is to contrast it to 
Proposition~\ref{p:Nash-implies-essential}. While the propostion
gives a necessary condition to be a Nash valuation, 
the theorem provides a sufficient condition, thus
squeezing the set of Nash valuations from the other side. 

\begin{remark}
There are no terminal valuations on a variety with terminal singularities,
nor over a variety which admits a small resolution, 
simply because in both case a minimal model over the variety does not extract any divisor.
The above result sheds no light on Nash valuations over varieties with such singularities. 
\end{remark}

There is another (more elementary) sufficient condition to be a Nash valuation.
We define a partial order among divisorial valuations on $X$
as follows. Given two divisorial valuations $v$ and $v'$ on $X$ 
we write $v \le v'$ if $c_X(v) \supset c_X(v')$
and $v(h) \le v'(h)$ for every $h \in \O_{X,c_X(v')}$. 
If moreover $v \ne v'$, then we write $v < v'$. 

\begin{definition}
A divisorial valuation $v$ centered in the singular locus of $X$ is said to be a
\emph{minimal valuation} of $X$ if it is minimal (with respect to the above 
partial order) among all divisorial valuations centered in $X_\sing$.
\end{definition}
 
\begin{proposition}
\label{p:minimal-implies-Nash}
Every minimal valuation of $X$ is a Nash valuation. 
\end{proposition}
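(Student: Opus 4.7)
The plan is to show that any minimal valuation $v$ must coincide with one of the Nash valuations appearing in the decomposition of $\pi_X^{-1}(X_\sing)$ provided by Theorem~\ref{t:Nash}.

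First I would write $v = \val_E$ for some prime divisor $E$ on a normal model $Y \to X$, and observe that $C_X(E) \subset \pi_X^{-1}(X_\sing)$. Indeed, the image $\pi_X(C_X(E))$ is contained in the closure of the center $c_X(\val_E)$, which lies in $X_\sing$ by hypothesis. By Theorem~\ref{t:Nash}, we may write
\[
\pi_X^{-1}(X_\sing)_\red = \bigcup_{j \in J} C_X(E_j),
\]
where each $\val_{E_j}$ is a Nash valuation. Since $C_X(E)$ is irreducible, there exists $j \in J$ with $C_X(E) \subseteq C_X(E_j)$. The goal is to upgrade this inclusion to the equality $\val_E = \val_{E_j}$, which will force $v$ to be a Nash valuation.

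The key step is to translate $C_X(E) \subseteq C_X(E_j)$ into the comparison $\val_{E_j} \le \val_E$ in the partial order of divisorial valuations. Let $\alpha$ and $\beta$ denote the generic points of $C_X(E)$ and $C_X(E_j)$, respectively. The inclusion says that $\alpha$ is a specialization of $\beta$, and applying $\pi_X$ gives $c_X(\val_E) \subseteq c_X(\val_{E_j})$ (the former is the closure of $\pi_X(\alpha)$, the latter of $\pi_X(\beta)$, and $\pi_X$ sends specializations to specializations). Using the fact, recorded in the remark after the definition of $X_\infty$, that $\val_\alpha(h) \ge \val_\beta(h)$ for every $h \in \O_{X,\alpha(0)}$, I get $\val_E(h) \ge \val_{E_j}(h)$ for every $h \in \O_{X,c_X(\val_E)}$. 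Both conditions together mean exactly that $\val_{E_j} \le \val_E = v$ in the partial order of Definition above.

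Finally, $\val_{E_j}$ is itself centered in $X_\sing$, since $C_X(E_j) \subseteq \pi_X^{-1}(X_\sing)$ yields $c_X(\val_{E_j}) \subseteq X_\sing$. Minimality of $v$ among divisorial valuations centered in $X_\sing$ then forces $\val_{E_j} = v$, so $v$ is the valuation of an irreducible component of $\pi_X^{-1}(X_\sing)$, i.e., a Nash valuation. The main point to verify carefully is the bookkeeping of the partial order under specialization of arcs — in particular the direction of the inequality in $\val_E(h) \ge \val_{E_j}(h)$ and the direction of the center containment — everything else is formal.
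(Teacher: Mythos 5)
Your proof is correct and follows essentially the same route as the paper: take an irreducible component $C_X(E_j)$ of $\p_X^{-1}(X_\sing)$ containing $C_X(E)$, note that the generic point of $C_X(E)$ is a specialization of that of $C_X(E_j)$, deduce $\val_{E_j} \le \val_E$ in the partial order, and conclude by minimality. Your extra care in checking both conditions defining the partial order (the containment of centers and the inequality of values) is exactly the bookkeeping the paper leaves implicit.
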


\begin{proof}
Let $\val_E$ be a minimal valuation of $X$, and let $C_X(E) \subset X_\infty$
the associated maximal divisorial set. As $\val_E$ is centered in the singular locus of $X$, 
we have $C_X(E) \subset \p_X^{-1}(X_\sing)$. Let $C$ be an irreducible component of 
$\p_X^{-1}(X_\sing)$ containing $C_X(E)$. 
Let $\a \in C_X(E)$ and $\b \in C$ be the respective generic points, 
so that $\val_\a = \val_E$ and $\val_\b = \val_C$.
Since $\a$ is a specialization of $\b$, we have $\val_\b \le \val_\a$.
The hypothesis that $\val_E$ is minimal implies that $\val_E = \val_C$. 
Therefore $\val_E$ is a Nash valuation.
\end{proof}

Toric varieties provide another important class of varieties where the Nash map is surjective.

\begin{theorem}[Ishii and Koll\'ar \protect{\cite[Theorem~3.16]{IK03}}]
\label{t:Nash-toric}
For a divisorial valuation $v$ centered in the singular locus of 
a toric variety $X$ the following properties are equivalent:
\begin{enumerate}
\item
$v$ is a minimal valuation, 
\item
$v$ is a Nash valuation,
\item
$v$ is an essential valuation.
\end{enumerate}
In particular, the Nash map is surjective for every toric variety.
\end{theorem}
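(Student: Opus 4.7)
The implications (a)$\Rightarrow$(b) and (b)$\Rightarrow$(c) hold for arbitrary varieties by Proposition~\ref{p:minimal-implies-Nash} and Proposition~\ref{p:Nash-implies-essential} respectively. Hence the substance of the theorem, and the place where the toric hypothesis enters, is the reverse implication (c)$\Rightarrow$(a); once this is in hand the surjectivity of the Nash map is automatic, because (c)$\Rightarrow$(a)$\Rightarrow$(b) exhibits every essential valuation as a Nash valuation.

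To prove (c)$\Rightarrow$(a), the plan is to translate the question into the combinatorics of the defining fan $\Sigma \subset N_\R$. The first step is to show that every essential valuation $v$ centered in $X_\sing$ is torus invariant. For this, fix any toric resolution $g \colon Y' \to X$ coming from a smooth subdivision of $\Sigma$. The preimage $g^{-1}(X_\sing)$ is a union of torus invariant prime divisors, and the essentiality of $v$ forces its center on $Y'$ to be one of them. Since this center is a divisor, $v$ is (up to rescaling) the divisorial valuation of that toric divisor, and hence corresponds to a primitive lattice vector, still denoted $v$, lying in the relative interior of some non-smooth cone $\sigma \in \Sigma$. The next step is to read off the partial order on valuations in toric terms: for a primitive $w \in N$ whose minimal cone in $\Sigma$ is a face of $\sigma$, the condition $w \le v$ translates into $\langle m, w \rangle \le \langle m, v \rangle$ for every $m \in \sigma^\vee \cap M$, equivalently $v - w \in \sigma$. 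So the assumption that $v$ fails to be minimal produces a primitive $w \ne v$ whose minimal face is still non-smooth and with $v - w \in \sigma$.

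The target is then to build, from such a $w$, a smooth subdivision $\Sigma'$ of $\Sigma$ that contains the ray through $w$ but does \emph{not} contain the ray through $v$; on the corresponding toric resolution the center of $v$ would land in the closure of a positive-codimension orbit rather than in a divisorial component of the preimage of $X_\sing$, contradicting essentiality. The main obstacle is precisely this combinatorial construction. The relation $v - w \in \sigma$ is what makes it feasible: after the star subdivision of $\Sigma$ along the ray through $w$, the vector $v$ lies in the relative interior of one of the new cones, and this cone strictly contains $v$ in its interior (since $u := v - w$ is a nonzero element of $\sigma$). From there standard toric desingularization procedures (as in Kempf--Knudsen--Mumford--Saint-Donat) can be carried out to resolve the remaining singular cones by repeatedly inserting rays that avoid $v$; this is always possible because $v$ sits in the interior of a cone of dimension at least two in the intermediate fan. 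The resulting smooth $\Sigma'$ yields a toric resolution on which $v$ has center of codimension greater than one, violating essentiality, and so completing the cycle of implications.
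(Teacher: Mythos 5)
Your overall strategy coincides with the paper's: reduce to the affine toric case, observe that essential valuations are torus invariant, translate the partial order into $v-w\in\sigma$, and derive a contradiction with essentiality by exhibiting a toric resolution whose fan omits the ray through $v$. However, two steps are asserted rather than proved, and the second one is the actual crux of the argument. First, the definition of a minimal valuation quantifies over \emph{all} divisorial valuations centered in $X_\sing$, not only the torus-invariant ones, so non-minimality of $v$ only hands you some $\val_F<v$ with no reason to be toric; your sentence ``the assumption that $v$ fails to be minimal produces a primitive $w$'' skips this. The paper repairs it by taking an irreducible component $C'$ of $\pi_X^{-1}(X_\sing)$ containing $C_X(F)$: the associated valuation $v'=\val_{C'}$ is a Nash valuation, hence essential, hence torus invariant, and $v'\le\val_F<v$, which produces the desired lattice vector $v'<_\sigma v$.

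The serious gap is the claim that the desingularization can always be completed ``by repeatedly inserting rays that avoid $v$'' merely because $v$ sits in the relative interior of a cone of dimension at least two. This is not a valid principle: for a singular two-dimensional cone, \emph{every} smooth subdivision must contain every ray through a Hilbert basis element (equivalently, every ray of the minimal resolution), so a vector interior to a two-dimensional cone can perfectly well be forced to appear as a ray at a later stage. What actually makes the construction work is the specific position of $v$: writing $v=v'+v''$ with both summands nonzero lattice vectors, $v$ fails to be minimal in the two-dimensional cone $\tau=\langle v',v''\rangle$, hence is not in the Hilbert basis of $\tau$ and therefore lies in the interior of a two-dimensional cone of the minimal subdivision $\Gamma$ of $\tau$. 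One then needs a genuine extension statement (for a \emph{suitable} choice of $v'$ and $v''$, this is \cite[Lemma~3.15]{IK03}) guaranteeing that $\Gamma$ extends to a smooth subdivision $\Delta$ of $\sigma$ containing the faces of $\Gamma$, such that $g\colon X(\Delta)\to X(\sigma)$ is an isomorphism over the smooth locus and $g^{-1}(X_\sing)$ is a union of divisors (this last condition is also needed for your concluding sentence, since a center of codimension $\ge 2$ could otherwise still be a component of $g^{-1}(X_\sing)$). Your initial star subdivision at $w$ does not by itself place $v$ in such a protected position, so the argument as written does not close.
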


\begin{proof}
We already know that (a)~$\Rightarrow$~(b)~$\Rightarrow$~(c) by Propositions~\ref{p:minimal-implies-Nash} 
and~\ref{p:Nash-implies-essential}. We are left to prove that (c)~$\Rightarrow$~(a).

Without loss of generality, we may assume that $X = X(\s)$ is the affine toric variety
associated to a cone $\s \subset N \otimes \R$, where $N$ is the lattice dual to the character lattice $M$ 
of the torus. 
The elements of $\s \cap N$ are in bijection with the torus-invariant valuations on $X$. 
Let $\s_\sing := \bigcup_\t \t^\o$, where $\t$ ranges over all singular faces of $\s$
and $\t^\o$ denotes the relative interior of $\t$.\footnote{A rational polyhedral
cone $\t \subset N \otimes \R$ is
\emph{regular} if the primitive elements in the rays of $\t$ form a part of a basis of $N$, 
and is \emph{singular} otherwise.} The elements in $\s_\sing \cap N$ 
are in bijection with the torus-invariant valuations on $X$ centered in $X_\sing$.
Given two vectors $v,v' \in \s \cap N$, we write $v \le_\s v'$ if $v' \in v + \s$. 
If moreover $v \ne v'$, then we write $v <_\s v'$. 
This defines a partial order on $\s \cap N$, and hence on $\s_\sing \cap N$.

Since $X$ admits a torus-invariant resolution of singularities $f \colon Y \to X$
such that $f^{-1}(X_\sing)$ is the union of torus-invariant divisors, 
every essential valuation of $X$ is torus-invariant
and, as such, it corresponds to an element in $\s_\sing \cap N$. 

Let $v$ be a toric valuation centered in $X_\sing$, 
and assume that $v$ is not a minimal valuation of $X$. 
Then there is a divisorial valuation $\val_F$ on $X$
with center $c_X(F)$ contained in $X_\sing$ and containing the center of $v$, 
such that $\val_F < v$. Since $c_X(F) \subset X_\sing$, we have
$C_X(F) \subset \p_X^{-1}(X_\sing)$. Let $C'$ be an irreducible component of 
$\p_X^{-1}(X_\sing)$ containing $C_X(F)$, and let $v' = \val_{C'}$.
Note that $v'$ is torus-invariant, since, being a Nash valuation, it is essential. 
We identify $v$ and $v'$ with the corresponding elements in $\s_\sing \cap N$. 
We have $v' \le \val_F$ by the inclusion
$C_X(F) \subset C'$. It follows that $v' < v$, and hence $v' <_\s v$. 

We have $v = v' + v''$ for some $v'' \in \s \cap N \setminus \{0\}$. 
Let $\t$ be the 2-dimensional cone spanned by $v'$ and $v''$, and let $\G$
be the fan spanned by all elements in $\t \cap N$ that are minimal with respect
to the partial order $\le_\t$. Geometrically, $X(\t)$ is a surface singularity
and $X(\G)$ is its minimal resolution. Since $v$ is not minimal in 
$\t \cap N$ with respect to $\le_\t$, it must belong to the interior of a 2-dimensional face
of $\G$. 

For a suitable choice of vectors $v'$ and $v''$ adding up to $v$, 
the subdivision $\G$ of $\t$ can be extended to a subdivision $\D$ of $\s$
so that the faces of $\G$ are faces in $\D$ and $g \colon X(\D) \to X(\s)$ is a resolution
of singularities such that $g^{-1}(X_\sing)$ is a union of divisors
and $g$ is an isomorphism over the nonsingular locus of $X(\s)$ 
(we refer to the proof of \cite[Lemma~3.15]{IK03} for more details). 
By construction, $v$ does not belong to any ray in $\D$. This means that the
center of $v$ in $X(\D)$ is not a divisor, and therefore it
cannot be an irreducible component of $g^{-1}(X_\sing)$. 
We conclude that $v$ is not an essential valuation. 
\end{proof}

There are other examples where the Nash map is known to be surjective.
Theorem~\ref{t:Nash-toric} is extended to non-normal pretoric varieties in \cite{Ish05}, 
and locally analytically pretoric singularities in \cite{Ish06}, which implies
in particular that the Nash map is surjective for quasi-ordinary singularities
since any such singularity decomposes, locally analytically, into
analytically irreducible quasi-ordinary singularities, which are analytically pretoric. 
This result was further extended in \cite{GP07} to cover the case of reduced germs of quasi-ordinary
hypersurface singularities. More examples were discovered in \cite{PPP08,LJR12,LA16}. 

The following recent result provides yet another class of examples.

\begin{theorem}[Docampo and Nigro \protect{\cite[Proposition~11.2]{DN}}]
\label{t:DN}
The Nash map is surjective for Schubert varieties in Grassmannians. 
\end{theorem}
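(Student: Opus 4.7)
The plan is to exploit the combinatorial and group-theoretic structure of Grassmannian Schubert varieties in order to reduce the surjectivity of the Nash map to one of the sufficient criteria established earlier, most plausibly Proposition~\ref{p:minimal-implies-Nash} on minimal valuations. Let $X = X_w \subset \Gr(k,n)$ be a Schubert variety. The Borel subgroup $B \subset \GL_n$ stabilizing $X_w$ acts on $X$, and hence on $X_\infty$ through the induced action of the arc group $B_\infty$. Since both $X_\sing$ and the decomposition $\p_X^{-1}(X_\sing) = \bigcup C_i$ are canonical, each Nash component is $B_\infty$-stable, so the associated Nash valuations and, dually, the essential valuations are $B$-invariant. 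This is the analogue for Schubert varieties of the torus-invariance observation that drives the toric case in Theorem~\ref{t:Nash-toric}.

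The first concrete step is to enumerate the essential valuations combinatorially. Schubert varieties in $\Gr(k,n)$ admit explicit resolutions---for example a Bott--Samelson resolution attached to a reduced word for $w$, or Zelevinsky's small-type resolution---whose exceptional divisors are indexed by combinatorial data attached to the Young diagram of $w$. The singular locus $X_\sing$ is itself a union of Schubert subvarieties (Lakshmibai--Sandhya), so on each such resolution the components of the preimage of $X_\sing$ can be read off directly. Intersecting over all resolutions---or equivalently over a cofinal family of $B$-equivariant ones---cuts out a distinguished finite set of $B$-invariant divisorial valuations, the essential ones.

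The main step is then to show that each essential valuation $v = \val_E$ is a minimal valuation in the sense of Proposition~\ref{p:minimal-implies-Nash}. Once this is established, that proposition realizes $v$ as a Nash valuation, proving surjectivity of the Nash map. Given any divisorial valuation $v'$ with center contained in $X_\sing$ and $v' \le v$, I would first argue via a $B_\infty$-equivariant specialization inside $X_\infty$---applied to the inclusion $C_X(v) \subset C_X(v')$ furnished by the curve selection lemma (Theorem~\ref{t:CSL})---that $v'$ may be replaced by a $B$-invariant divisorial valuation $\bar v'$ satisfying the same inequalities. One then checks combinatorially, using the explicit labeling of $B$-invariant divisors over $X_w$ and the codimension structure of $X_\sing$, that no such $\bar v' < v$ has center inside $X_\sing$; this forces $v' = v$ and gives the desired minimality.

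The main obstacle is the $B$-equivariant reduction in the last paragraph: the partial order in Proposition~\ref{p:minimal-implies-Nash} quantifies over \emph{all} divisorial valuations, not just the Borel-invariant ones, and in the Schubert setting one does not have the luxury of a dense torus orbit to apply the toric argument verbatim. Handling this step---whether by a direct $B$-averaging on arcs, by an equivariant degeneration of $X_w$ to a toric variety, or by a matroid-style coordinatization of the arc space of $X_w$---is where I expect the genuine work of the proof to lie, and is likely the point where the specific Grassmannian (cominuscule) geometry enters in an essential way.
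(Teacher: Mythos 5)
Your proposal correctly identifies the target that the paper itself points to (show that every essential valuation of a Schubert variety is minimal, then invoke Proposition~\ref{p:minimal-implies-Nash}), but it does not reach it, and the step you yourself flag as ``where the genuine work lies'' is precisely the gap. The paper's argument does not run through Borel-equivariance at all. Its entire engine is one geometric fact: there is a resolution $f\colon Y\to X$ (Zelevinsky's) such that $f^{-1}(Z_i)$ is \emph{irreducible} for every irreducible component $Z_i$ of $X_\sing$. Feeding this into the Nash decomposition of Theorem~\ref{t:Nash} immediately forces $\p_X^{-1}(X_\sing)_\red=\bigcup\p_X^{-1}(Z_i)$ to be the decomposition into irreducible components, so the Nash valuations are exactly the valuations $\ord_{Z_i}$ of the divisors dominating the $Z_i$, and these are visibly both minimal and essential; the three sets of valuations then coincide. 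You mention Zelevinsky's resolutions only as a bookkeeping device for listing exceptional divisors, and you miss the property of them that actually does the work.

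Beyond being incomplete, the substitute mechanism you propose is doubtful. First, ``dually, the essential valuations are $B$-invariant'' does not follow from the $B_\infty$-stability of the Nash components: essentiality is a condition quantified over \emph{all} resolutions, and testing it on a cofinal family of $B$-equivariant ones is not legitimate, since being an irreducible component of the preimage of $X_\sing$ is not preserved under passing between dominated models. Second, the reduction of an arbitrary divisorial valuation $v'\le v$ to a $B$-invariant one has no analogue of the toric argument: in Theorem~\ref{t:Nash-toric} the comparison valuation $v'$ is replaced by the valuation of the Nash component containing $C_X(F)$, which is invariant because it is \emph{essential} on a torus-equivariant resolution and torus-invariant divisorial valuations are classified by lattice points of the cone, with the partial order read off from the fan. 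Over a Schubert variety the $B$-invariant divisorial valuations form no such finite combinatorial set (there are infinitely many $B$-invariant divisorial valuations over any $B$-fixed point), so the ``explicit labeling'' and the final combinatorial check you appeal to are not available. In short: the approach would need to be replaced by, not supplemented with, the irreducibility statement for $f^{-1}(Z_i)$ that the paper uses.
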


Reviewing the proof of this theorem would require setting up some notation
about Schubert varieties which we prefer to avoid here, 
so we limit ourselves to outline the argument. 
If $X$ is a Schubert variety and $X_\sing = \bigcup Z_i$ is the decomponsition
of the singular locus of $X$ into irreducible components, then there is a resolution 
$f \colon Y \to X$ such that $f^{-1}(Z_i)$ is irreducible for every $i$, so that
\[
f^{-1}(X_\sing)_\red = \bigcup f^{-1}(Z_i)
\]
is the decomposition into irreducible components of $f^{-1}(X_\sing)$.
The construction of the resolution is well-known to the experts; it appears for example in \cite{Zel83}.
This fact immediately implies that
\[
\p_X^{-1}(X_\sing)_\red = \bigcup \p_X^{-1}(Z_i)
\]
is the decomposition into irreducible components of $\p_X^{-1}(X_\sing)$.
Therefore, if $E_i$ is the divisor dominating $Z_i$
in the blow-up of $Y$ along $Z_i$, then $\val_{E_i}$ is a Nash valuation. 
Note that $\val_{E_i}$ is also both minimal and essential. 
In particular, the Nash map is surjective and the sets of minimal valuations, 
Nash valuations, and essential valuations are all the same.  

\begin{remark}
It was noticed in \cite{Zel83} that all Schubert varieties (in Grassmannians) admit small resolutions. 
This implies that they do not have any terminal valuations.
\end{remark}

Much of the work on the Nash problem 
for surfaces that preceded \cite{FdBPP12} has been based on explicit 
analysis of the singularities, thus resulting in more or less
explicit descriptions of spaces of arcs. 
Arc spaces have also been studied on 
varieties which possess additional structure such as a group action or a combinatorial nature, 
not just in relation to the Nash problem but also for its own sake.

An example is the paper \cite{DN} we just mentioned, 
whose main purpose is certainly not to solve the Nash
problem for Schubert varieties which mainly relies upon 
the existence of such nice resolutions,
but rather to describe the space of arcs of the Grassmannian which is done by
means of a decomposition of the arc space
that resembles the Schubert cell decomposition of the Grassmannian itself.
As it is explained at the end of \cite{DN}, these general results already
give a way of understanding directly
the decomposition of $\p_X^{-1}(X_\sing)$ into irreducible components
for any Schubert subvariety $X$ of the Grassmannian that does not make use
of any explicit resolution of singularities.
Arc spaces were previously studied for toric varieties in \cite{Ish04}
and for determinantal varieties in \cite{Doc13}. 

\medskip

The next corollary summarizes the general results on the Nash problem stated in this section.

\begin{corollary}
\label{c:inclusions}
For any variety $X$, there are inclusions
\[
\big(\{\,\text{minimal val's}\,\} \cup
\{\,\text{terminal val's}\,\}\big) \subset
\{\,\text{Nash val's}\,\} \subset
\{\,\text{essential val's}\,\}.
\]
\end{corollary}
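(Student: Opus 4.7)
The plan is to observe that this corollary is purely a bookkeeping consequence of three results already established in this section, and to treat the two inclusions separately.

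For the first inclusion, I would split the union and handle each part by citing the appropriate prior result. The inclusion of minimal valuations into Nash valuations is precisely Proposition~\ref{p:minimal-implies-Nash}, whose proof used that if $\val_E$ is minimal and $C$ is an irreducible component of $\p_X^{-1}(X_\sing)$ containing $C_X(E)$, then $\val_C \le \val_E$ by specialization and minimality forces equality. The inclusion of terminal valuations into Nash valuations is exactly Theorem~\ref{t:dFD} of de Fernex and Docampo. Taking the union of the two inclusions on the left-hand side gives the first containment in the statement.

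For the second inclusion, I would simply invoke Proposition~\ref{p:Nash-implies-essential}: given a Nash valuation $\val_C$ associated to an irreducible component $C$ of $\p_X^{-1}(X_\sing)$, the argument in that proof shows that on any resolution $f \colon Y \to X$ the center of $\val_C$ is forced to be an irreducible component of $f^{-1}(X_\sing)$, because $C = \overline{f_\infty(\p_Y^{-1}(E))}$ for some irreducible component $E$ of $f^{-1}(X_\sing)$.

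There is no genuine obstacle here; the corollary is a packaging statement, and the only thing to be careful about is that the two halves of the left-hand union are combined correctly (no claim is being made that every Nash valuation is minimal or terminal, only that each of these two sets is contained in the set of Nash valuations). Thus the full proof is just the chain
\[
\{\text{minimal}\} \cup \{\text{terminal}\} \;\subset\; \{\text{Nash}\} \;\subset\; \{\text{essential}\},
\]
with the three containments justified by Proposition~\ref{p:minimal-implies-Nash}, Theorem~\ref{t:dFD}, and Proposition~\ref{p:Nash-implies-essential} respectively.
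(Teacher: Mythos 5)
Your proposal is correct and follows exactly the paper's own proof: the first inclusion is obtained by combining Proposition~\ref{p:minimal-implies-Nash} with Theorem~\ref{t:dFD}, and the second is Proposition~\ref{p:Nash-implies-essential}, i.e.\ the Nash map. Nothing further is needed.
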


\begin{proof}
The first inclusion follows by Theorem~\ref{t:dFD} and
Proposition~\ref{p:minimal-implies-Nash}. The second inclusion is the Nash map, which is defined by
Proposition~\ref{p:Nash-implies-essential}.
\end{proof}

Surfaces, toric varieties, and Schubert varieties (in Grassmannians) form three classes of varieties
for which the Nash map is known to be a bijection (Theorems~\ref{t:Nash-dim=2},~\ref{t:Nash-toric}
and~\ref{t:DN}). However, the surjectivity of the Nash map in these cases appears to hold for 
different reasons. 

For surfaces, the surjectivity follows by the fact that
every essential valuation is a terminal valuation. This means that, for surfaces, we have
\[
\{\,\text{minimal val's}\,\} \subset
\{\,\text{terminal val's}\,\} = 
\{\,\text{Nash val's}\,\} =
\{\,\text{essential val's}\,\},
\]
and the first inclusion may be strict. 

By contrast, for toric varieties and Schubert varieties,  
the surjectivity follows by the fact that
every essential valuation is a minimal valuation, which gives
\[
\{\,\text{terminal val's}\,\} \subset
\{\,\text{minimal val's}\,\} =
\{\,\text{Nash val's}\,\} =
\{\,\text{essential val's}\,\}, 
\]
and the first inclusion may be strict.

We do not know any example where the first inclusion in Corollary~\ref{c:inclusions} is a strict inclusion.
If the inclusion turned out to be always an equality, this would provide
a complete solution to the Nash problem. It is however quite possible that
the inclusion is strict in general. 

\begin{remark}
It would be interesting to see
whether there are other classes of varieties, like those listed above, 
for which all Nash valuations are either minimal or terminal, and to find examples
where this property holds but neither the set of minimal valuations
nor the set of terminal valuations suffices, alone, to exhaust all Nash valuations. 
\end{remark}

Approaching the set of Nash valuations from the other side, one could try to fix the Nash problem 
by modifying the definition of essential valuation. The definition given in \cite{Nas95}
requires testing a certain condition on the centers of
the valuations on all resolutions, but such definition is not restrictive enough to
characterize Nash valuations.
By enlarging the class of models where the 
condition is tested, one may hope to get a more restrictive version of essential valuations
that agrees with Nash valuations. 

This idea is explored in \cite{JK13} for valuations on a normal threefold $X$. 
The focus is on valuations centered at closed points in $X$. 

\begin{definition}
An isolated threefold singularity $Q \in Y$ is \emph{arc-wise Nash-trivial}
if for every general arc $\a \colon \Spec k[[t]] \to Y$ passing 
through a singular point $Q \in Y$ there is a morphism $\Phi \colon \Spec k[[s,t]] \to Y$
such that $\a(t) = \Phi(0,t)$ and $\Phi^{-1}(Q)$ is zero-dimensional.
\end{definition}

\begin{definition}
A divisorial valuation $v$ centered at a closed point $P$ of a normal threefold $X$ is a
\emph{very essential valuation} for $(P \in X)$ if for every
proper birational model $f \colon Y \to X$ where $Y$ has only 
isolated, $\Q$-factorial, {arc-wise Nash-trivial} singularities, 
either the center $c_Y(v)$ is an irreducible component of $f^{-1}(P)$,
or the maximal divisorial set $C_Y(v)$ is an irreducible component
of $\p_Y^{-1}(Q)$ for some singular point $Q \in Y$ such that 
$\dim_Q(f^{-1}(P)) \le 1$.\footnote{It is suggested in \cite{JK13} that, in this definition,
one may need to allow $Y$ to be an algebraic space.}
\end{definition}

The set of valuations that are very essential for a normal threefold singularity
$(P \in X)$ clearly contains
the set of valuations defined by the irreducible components of $\p_X^{-1}(P)$.
The converse is not known.

\begin{question}[Johnson--Koll\'ar \protect{\cite[Problem~38]{JK13}}]
Let $P$ be a closed point of a normal threefold $X$. 
Is every very essential valuation for $(P \in X)$ defined by an irreducible component of $\p_X^{-1}(P)$?
\end{question}

One can also consider the following alternative approach which is perhaps too optimistic
but it is easier to state in all dimensions.
Let $X$ be any variety. 

\begin{definition}
A proper birational morphism $f \colon Y \to X$
is an \emph{arc-wise semi-resolution} if for every irreducible component $E$ of $f^{-1}(X_\sing)$, 
the set $\p_Y^{-1}(E)$ is irreducible. 
\end{definition}

\begin{definition}
A divisorial valuation on $X$ is \emph{strongly essential} if its center  
on any arc-wise semi-resolution $f \colon Y \to X$ is an irreducible component of $f^{-1}(X_\sing)$.
\end{definition}

Clearly, strongly essential valuations are essential, 
and the same argument proving that Nash valuations are essential shows that
they are strongly essential. 

\begin{question}
Is a valuation on a variety $X$ a Nash valuation if and only if it is
a strongly essential valuation?
\end{question}

\section{The Nash problem in the analytic topology}

The space of arcs can be defined for any complex analytic variety $V$. 
If $V$ is defined by the vanishing of finitely many holomorphic 
functions $f_i(x) = 0$ in an analytic domain $U \subset \C^n$, then $V_\infty$
is defined as the set of $n$-ples of power series $x(t) \in \C[[t]]^n$ such that
$f_i(x(t)) = 0$ for all $i$. The jet spaces $V_m$ are defined similarly, and $V_\infty$
is their inverse limit. As such, it inherits the inverse limit analytic topology. 

Suppose that $V = X^\an$ is the analytification of some complex algebraic variety $X$. 
Then the points of $V_m$ are in bijection with $X_m(\C)$ and the points of $V_\infty$ with $X_\infty(\C)$. 
By Theorem~\ref{t:Gre}, the image of $\p_V^{-1}(V_\sing)$ in any finite level $V_m$ is a constructible set
and the decomposition of its closure into irreducible analytic subvarieties of $V_m$ 
stabilizes for $m \gg 1$. It follows that the truncation maps $V_{m+1} \to V_m$
establish a one-to-one correspondence between such decompositions. 
By passing to the limit as $m \to \infty$, 
one obtains a decomposition of $\p_V^{-1}(V_\sing)$ into finitely many families of arcs
which agrees with the decomposition into irreducible components of $\p_X^{-1}(X_\sing)$
given in Theorem~\ref{t:Nash}.

We can define essential valuations over $V$ analogously to the algebraic setting, by looking at the 
centers of the valuation on all analytic resolutions $W \to V$. It turns out that
the notion of essential valuation depends on the category. 

\begin{theorem}[de Fernex \protect{\cite[Theorem~5.1]{dF13}}]
There is a divisorial valuation over a complex threefold $X$ that is
essential in the category of schemes but not in the analytic category.
\end{theorem}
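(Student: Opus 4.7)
The plan is to produce a threefold $X$ and a divisorial valuation $v$ on $X$ such that $c_Y(v)$ is an irreducible component of $f^{-1}(X_\sing)$ for every algebraic resolution $f\colon Y\to X$, while on some analytic resolution $W\to X^{\an}$ the center $c_W(v)$ fails to be a component of the analytic preimage. The mechanism is the extra birational flexibility of the analytic category: compound Du Val singularities whose defining equations fail to factor in the algebraic local ring can nevertheless admit analytic small resolutions coming from convergent factorizations, and these extra analytic models should witness the failure of essentiality.

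For the construction I would take $X\subset\A^4$ of compound-$A_{k-1}$ form $X=(xy=f(z,w))$ with $f\in\C[z,w]$ irreducible as a polynomial but splitting as $f=f_1\cdots f_k$ in $\C\{z,w\}$ for some $k\ge 3$. A concrete candidate is $f=z^k-w^k+w^{k+1}$: absorbing a convergent $k$-th root of $1-w$ into the leading term produces an analytic splitting into $k$ linear-in-$z$ factors, while a degree count rules out any polynomial factorization. Blowing up the origin, and iterating if necessary, yields an algebraic resolution $Y\to X$ together with a specific exceptional divisor $E$. The schematic essentiality of $\val_E$ would then be proved by combining two observations: $X$ is terminal (by a direct discrepancy computation, since $X$ is compound Du Val), and algebraic resolutions of $X$ admit no small contractions (as any such contraction would force an algebraic factorization of $f$, contradicting its irreducibility). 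Consequently, on every algebraic resolution of $X$ the preimage of $X_\sing$ is a union of divisors and $\val_E$ is extracted as one of them, so $c(\val_E)$ is automatically a component.

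For the analytic failure I would use the factorization $f=f_1\cdots f_k$ to construct the analytic small resolution $W\to X^{\an}$ by the iterated graph closure construction, successively replacing $X^{\an}$ by the closure of the rational map $[x:f_1\cdots f_j]\colon X^{\an}\dashrightarrow\P^1$ for $j=1,\dots,k-1$. The result is a smooth analytic threefold $W$ whose exceptional locus over the origin is a chain of $k-1$ smooth rational curves. To compute $c_W(\val_E)$ I would pass to a common resolution $Z$ dominating both $Y$ and $W$ and trace the image of the strict transform of $E$ under $Z\to W$. The target is to show that $E$ is collapsed to a proper closed subset of one of the exceptional curves in $W$, so that $c_W(\val_E)$ is at most a point and in particular fails to be an irreducible component of the preimage of $X^{\an}_\sing$ in $W$.

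The main obstacle will be this last computation: it requires an explicit analytic chart description of $W$ together with a careful comparison of the orders of vanishing prescribed by $\val_E$ on local analytic parameters of $W$ and on local parameters of $Y$, and the identification of $E$ within the cluster of divisors extracted by $Z\to Y$. Schematic essentiality in the preceding paragraph, although requiring bookkeeping, reduces to standard discrepancy and factorization arguments once the terminal compound-Du-Val structure of $X$ is in hand.
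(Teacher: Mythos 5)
You have correctly identified the mechanism behind the counterexample (Zariski-local factoriality versus an analytic small resolution arising from a convergent factorization), but your construction departs from the paper's in a way that undermines exactly the step you flag as the ``main obstacle.'' If the analytically resolvable singularity sits directly on $X$, the fiber of the analytic small resolution $\mu\colon W\to X^\an$ over $O$ is a chain of curves $C_1\cup\dots\cup C_{k-1}$, and these curves \emph{are} the irreducible components of $\mu^{-1}(X_\sing)$. So $W$ witnesses non-essentiality of $\val_E$ only if $c_W(\val_E)$ is a point; if it equals some $C_i$, the essentiality test is passed. The model computation for the ordinary double point $xy=zw$ shows that the latter is what tends to happen: the blow-up of the origin dominates the small resolution by contracting one ruling of the exceptional quadric onto the exceptional curve $C$, so the center of the blow-up valuation on the small resolution is all of $C$, an irreducible component of the fiber. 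This is precisely why the example of \cite{dF13} is built in two steps: the first blow-up extracts a divisor $F$ and creates an ordinary double point $P$ on $Y$; in the composite analytic resolution $W\to Y^\an\to X^\an$ the preimage of $O$ is the irreducible \emph{surface} given by the strict transform of $F$, which contains the small curve $C$ over $P$, and the center of $\val_G$ is the curve $C$ --- a proper closed subset of a two-dimensional component, hence not a component. Your one-step construction provides no such surface for the curve to sit inside, and you give no reason to expect the center to drop to a point. Relatedly, you must check that your candidate $\val_E$ is not a Nash valuation: the proof that Nash valuations are essential works verbatim for analytic resolutions, so a Nash valuation can never separate the two categories, and for $X=(xy=f)$ the components of $\p_X^{-1}(O)$ are closely tied to the divisors extracted by the first blow-up.

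A secondary gap: ``terminal and factorial, hence no small contractions, hence $\val_E$ is extracted on every algebraic resolution'' is a non sequitur. Factoriality gives that the exceptional locus of any resolution is purely divisorial, so the fiber over $O$ is a union of divisors; it does not give that the particular divisor $E$ occurs among them. Pinning down schematic essentiality of a specific valuation requires a discrepancy argument of the kind used for the Johnson--Koll\'ar example in this paper (e.g.\ $K_{W/X}\ge\Exc(\mu)$ for terminal factorial $X$, together with identifying the divisors of discrepancy one over $O$), and this is another place where the two-step structure of the paper's example is doing real work.
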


The example is a threefold $X \subset \C^4$ with an isolated singularity $O$. 
In this example, $\p_X^{-1}(O)$ is irreducible. 
Blowing up the singular point produces a variety $Y$ with an ordinary double point $P$, 
and the exceptional divisor $F$ defines the only Nash valuation on $X$.  
A resolution of $X$ is obtained by blowing up $P \in Y$. The divisor $G$ extracted by this second blow-up
defines a valuation $\val_G$ over $X$ which is essential in the category of schemes. 
This valuation is however not essential 
in the analytic category. This is due to the fact that $Y^\an$ admits a small analytic resolution
$W \to Y^\an$, which is not defined in the category of schemes, 
where the center of $\val_G$ is not an irreducible component
of the inverse image of $(X^\an)_\sing$. The difference in the notion of essential
valuation is reflected in this example in the fact that $X$ is locally $\Q$-factorial 
in the Zariski topology but not in the analytic topology.

One can also formulate the Nash problem in the category of algebraic spaces.
Just like in the analytic setting, 
there are more resolutions in the category of algebraic spaces than the 
category of schemes, and this can affect the notion of essential valuation.

\section{The Nash problem in positive characteristics}

Two ingredients play a pivotal role in the treatment of the subject
in characteristic zero: resolution of singularities and generic smoothness. 
Both are used in the proofs of Theorems~\ref{t:Kolchin} 
and~\ref{t:Nash} given here.

Kolchin's original proof of Theorems~\ref{t:Kolchin}
does not use resolution of singularities and works over any field
of characteristic zero. A geometric proof in this generality
is given in \cite[Theorem~3.6]{NS10}. 
Irreducibility fails however in positive characteristics. 
The following example was suggested by J\'anos Koll\'ar.\footnote{This example 
is discussed in \cite[Example~2.13]{IK03} in 
regard to the decomposition of $\p_X^{-1}(X_\sing)$ (cf.\ Example~\ref{eg:Nash-char-p}).}

\begin{example}
\label{eg:Kolchin-char-p}
Consider the $p$-fold Withney umbrella $X = (xy^p = z^p) \subset \A^3$ over a 
field $k$ of characteristic $p > 0$. 
The singular locus is the $x$-axis $X_\sing = (y=z=0)$. 
The ideal in $k[x,y,z,x',y',z']$ of the first jet scheme $X_1 \subset (\A^3)_1$
is generated by $xy^p - z^p$ and $x'y^p$, and its
primary decomposition is $(xy^p - z^p,x') \cap (y^p,z^p)$.
Therefore $X_1$ has two irreducible components: $V(xy^p - z^p,x')$ and $V(y,z)$. 
The first component is the closure of the image of $(X \setminus X_\sing)_\infty$.
On the other hand, the arc $\a = (t,0,0) \in (X_\sing)_\infty$ 
maps to a 1-jet that does not belong to such component. 
It follows that $(X_\sing)_\infty$ is not contained in
the closure of $(X \setminus X_\sing)_\infty$, and hence
$X_\infty$ is not irreducible. 
\end{example}

The next theorem tells us that
this is the worst that can happen, for arc spaces of varieties over perfect fields.

\begin{theorem}[Reguera \protect{\cite[Theorem~2.9]{Reg09}}]
\label{t:Reguera-irred}
The arc space $X_\infty$ of a variety $X$ defined over a perfect field $k$ has
a finite number of irreducible components only one of which is not contained in $(X_\sing)_\infty$. 
\end{theorem}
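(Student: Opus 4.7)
The plan is to prove the stronger decomposition
\[
X_\infty \;=\; \overline{U_\infty} \;\cup\; (X_\sing)_\infty,
\]
where $U := X_\sm$, by induction on $\dim X$. Since $k$ is perfect, $U$ is a smooth open dense subvariety of $X$, so $U_\infty$ is irreducible by the argument recalled earlier in the paper for smooth varieties (the truncations $U_{m+1} \to U_m$ are locally trivial $\A^n$-fibrations). Once the displayed decomposition is in hand, induction applied to the finitely many lower-dimensional irreducible components of $(X_\sing)_\red$ gives finiteness of the irreducible components of $(X_\sing)_\infty$, and therefore of $X_\infty$. Moreover, any irreducible component $C$ of $X_\infty$ must, by irreducibility, be contained in one of the two pieces; $\overline{U_\infty}$ is itself a maximal irreducible closed set, because any strictly larger irreducible closed set would have to lie in $(X_\sing)_\infty$, contradicting $U_\infty \cap (X_\sing)_\infty = \emptyset$; and any $C \neq \overline{U_\infty}$ is then forced into $(X_\sing)_\infty$, giving the uniqueness statement.

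The heart of the argument is the inclusion $X_\infty \setminus (X_\sing)_\infty \subset \overline{U_\infty}$, for which my proposal is the following translation trick. Given an arc $\alpha \colon \Spec K[[t]] \to X$ whose generic point $\alpha(\epsilon)$ lies in $U$, consider the composition
\[
\widetilde\alpha \colon \Spec K[[s,t]] \xrightarrow{\;t \,\mapsto\, s+t\;} \Spec K[[t]] \xrightarrow{\;\alpha\;} X.
\]
Viewing $K[[s,t]] = K[[s]][[t]]$ as a $K[[s]]$-algebra, $\widetilde\alpha$ is the same data as a morphism $\Spec K[[s]] \to X_\infty$ whose closed point recovers $\alpha$ itself and whose generic point is the arc $\alpha_\eta(t) = \alpha(s+t) \in X_\infty(K((s)))$. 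The key observation is that the closed point $\alpha_\eta(0)$ of the arc $\alpha_\eta$, viewed as a $K((s))$-point of $X$, is nothing but the original generic point $\alpha(\epsilon)$ with the variable $t$ relabeled as $s$, hence lies in $U$ by hypothesis; since $U$ is open in $X$ and the unique nonempty open subset of $\Spec K((s))[[t]]$ is the whole scheme, $\alpha_\eta$ factors through $U$, i.e., $\alpha_\eta \in U_\infty$. Consequently $\alpha$ lies in the closure of $\{\alpha_\eta\}$ in $X_\infty$, hence in $\overline{U_\infty}$.

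The main obstacle is precisely this inclusion: in characteristic zero it followed at once from the existence of a resolution $f \colon Y \to X$ and generic smoothness of the induced $f_\infty$, neither of which is available in positive characteristic. The substitution $t \mapsto s+t$ is the essential new ingredient, sidestepping this issue by producing the required degeneration intrinsically inside $X$; the role of the perfectness hypothesis is only to make $U = X_\sm$ nonempty and dense, after which the induction and the identification of the unique ``main'' component of $X_\infty$ proceed by elementary manipulations with irreducible components as sketched in the first paragraph.
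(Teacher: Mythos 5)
Your proof is correct, and it is worth pointing out that the survey itself gives no argument for this statement --- it is quoted from Reguera's paper as a black box --- so you are supplying a genuine proof rather than reproducing one. The reduction to the two claims (i) $U_\infty$ is irreducible for $U = X_\sm$ and (ii) $X_\infty \setminus (X_\sing)_\infty \subset \overline{U_\infty}$ is sound: (i) holds in any characteristic because the truncations over a smooth variety are Zariski-locally trivial $\A^n$-fibrations, and once (ii) is known the finiteness and uniqueness assertions follow from the induction on $\dim X$ and elementary topology exactly as you describe (for the induction you should say explicitly that $(X_\sing)_\infty$ coincides set-theoretically with the union of the arc spaces of the irreducible components of $(X_\sing)_\red$, which holds because $\Spec K[[t]]$ is irreducible and reduced). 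Your translation trick does establish (ii): the element $\alpha(s+t) \in X(K[[s]][[t]])$ defines a morphism $\Spec K[[s]] \to X_\infty$ whose closed point is $\alpha$ and whose generic point is the $K(\hskip-1pt(s)\hskip-1pt)$-arc $\alpha(s+t)$, with center $\alpha(s+t)|_{t=0} = \alpha(s)$, i.e.\ the generic point $\alpha(\e)$ of the original arc; if $\alpha \notin (X_\sing)_\infty$ this center lies in $U$, the arc then factors through the open set $U$, and continuity of the morphism places $\alpha$ in $\overline{U_\infty}$. The substitution is characteristic-free (the binomial coefficients appearing in the expansion of $\alpha(s+t)$ are harmless integers), and perfectness enters only through the density of $X_\sm$, consistently with the failure of the statement over imperfect fields recorded in the paper. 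One small imprecision: it is not true that the whole scheme is the unique nonempty open subset of $\Spec K(\hskip-1pt(s)\hskip-1pt)[[t]]$ (the generic point is open); what you need, and what is true, is that the only open subset containing the \emph{closed} point is the whole scheme, since the closed point is a specialization of the generic point.
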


An example where irreducibility fails for a regular variety defined over a non-perfect field
can be found in \cite[Theorem~3.19]{NS10}.

The Nash problem was discussed for varieties in positive characteristics in \cite{IK03}.
For the reminder of the section, we shall assume that
\begin{quote}
\emph{$X$ is a variety
over an algebraically closed field $k$ of positive characteristic.}
\end{quote}
We write the decomposition of $\p_X^{-1}(X_\sing)_\red$ into irreducible components
as follows:
\[
\p_X^{-1}(X_\sing)_\red = \Big(\bigcup_{i \in I}C_i\Big) \cup \Big(\bigcup_{j \in J}D_j\Big),
\]
where $C_i \not\subset (X_\sing)_\infty$ and $D_j \subset (X_\sing)_\infty$. 

\begin{definition}
The components $C_i$ are called the \emph{good components} of $\p_X^{-1}(X_\sing)$.
\end{definition}

In order to gain some control on the decomposition, one needs to assume something 
about resolution of singularities. Since we cannot rely on generic smoothness, 
we consider resolutions that are isomorphisms over the smooth locus. 

\begin{definition}
A divisorial valuation $v$ on $X$ is \emph{essential} if for any 
resolution $f \colon Y \to X$ inducing an isomorphism over $X_\sm$, the center of $v$ on $Y$
is an irreducible component of $f^{-1}(X_\sing)$. 
\end{definition}

\begin{theorem}[Ishii--Koll\'ar \protect{\cite[Theorem~2.15]{IK03}}]
\label{t:IK-char-p}
Assume that $X$ has a resolution $f \colon Y \to X$ that is an isomorphism over $X_\sm$. 
Then for any good component $C_i$ of $\p_X^{-1}(X_\sing)$ there is
a divisor $E_i$ over $X$ such that $C_X(E_i) = C_i$. 
Moreover, the center of $\val_{E_i}$ on 
any such resolution $f$ is an irreducible component of $f^{-1}(X_\sing)$. 
In particular, $\p_X^{-1}(X_\sing)$ has only finitely many good components, 
and the valuation $\val_{C_i}$ associated to any such component is equal to $\val_{E_i}$
and hence is essential. 
\end{theorem}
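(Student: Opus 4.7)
The plan is to leverage the hypothesis that $f$ is an isomorphism over the smooth locus $X_\sm$, which via the valuative criterion of properness yields a bijection
\[
f_\infty \colon Y_\infty \setminus (f^{-1}(X_\sing))_\infty \xrightarrow{\ 1\text{-}1\ } X_\infty \setminus (X_\sing)_\infty.
\]
Combined with the fact that $\p_Y^{-1}(E)$ is irreducible for any irreducible $E \subset Y$ (since $Y$ is smooth, by the Hensel's lemma argument, which works in any characteristic), this replaces the generic smoothness input used in the characteristic zero proof of Theorem~\ref{t:Nash}.

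Fix a good component $C_i$, and let $\g_i$ be its generic point. Because $C_i$ is good, $\g_i \notin (X_\sing)_\infty$, so the bijection produces a unique $\~\g_i \in Y_\infty$ with $f_\infty(\~\g_i) = \g_i$. Since $\g_i(0) \in X_\sing$, we have $\~\g_i(0) \in f^{-1}(X_\sing)$, so $\~\g_i \in \p_Y^{-1}(E_i)$ for some irreducible component $E_i$ of $f^{-1}(X_\sing)$. This gives $C_i \subset C_X(E_i)$, and $C_X(E_i)$ is irreducible and contained in $\p_X^{-1}(X_\sing)$. To upgrade this to equality, I would observe that $C_X(E_i)$ is itself a good component: the generic point $\~\b$ of $\p_Y^{-1}(E_i)$ is an arc transversal to $E_i$ at the generic point of $E_i$, and in particular its image in $Y$ is not contained in $f^{-1}(X_\sing)$; hence $f_\infty(\~\b) \notin (X_\sing)_\infty$ and $C_X(E_i) \not\subset (X_\sing)_\infty$. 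Since $C_i$ is maximal among irreducible subsets of $\p_X^{-1}(X_\sing)_\red$ with this property, the inclusion $C_i \subset C_X(E_i)$ forces $C_X(E_i) = C_i$.

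For the essentiality of $\val_{E_i}$, I would apply the same construction to an arbitrary resolution $f' \colon Y' \to X$ that is an isomorphism over $X_\sm$: it produces an irreducible component $E_i'$ of $(f')^{-1}(X_\sing)$ with $C_X(E_i') = C_i = C_X(E_i)$. Since a maximal divisorial set determines its valuation, $\val_{E_i'} = \val_{E_i}$ on $k(X)$; but $E_i'$ is tautologically the center of $\val_{E_i'}$ on $Y'$, so it is also the center of $\val_{E_i}$, and by construction this center is an irreducible component of $(f')^{-1}(X_\sing)$. The finiteness of the collection of good components is then automatic, since the assignment $C_i \mapsto E_i$ injects into the finite set of irreducible components of $f^{-1}(X_\sing)$.

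The main obstacle is the upgrade from $C_i \subset C_X(E_i)$ to equality, i.e.\ the verification that $C_X(E_i)$ is itself a good component. Without generic smoothness of $f$, one cannot argue as in Theorem~\ref{t:Nash} that $f_\infty$ dominates each irreducible component of $\p_X^{-1}(X_\sing)$. The replacement is the transversality observation above, which uses only the smoothness of $Y$ and not of $f$, and which is precisely the ingredient that keeps the argument alive in positive characteristic.
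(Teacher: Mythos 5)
Your strategy is the right one, and it is the expected route (the survey itself gives no proof, citing \cite{IK03}): the generic point $\g_i$ of a good component lies outside the closed set $(X_\sing)_\infty$, hence lifts uniquely to $Y$ by the valuative criterion of properness, lands in $\p_Y^{-1}(E_i)$ for some irreducible component $E_i$ of $f^{-1}(X_\sing)$, and the chain $C_i \subset \ov{f_\infty(\p_Y^{-1}(E_i))} \subset \p_X^{-1}(X_\sing)$ forces equality. Note that your detour through checking that $\ov{f_\infty(\p_Y^{-1}(E_i))}$ is itself good is not needed for that step: $C_i$ is maximal among \emph{all} irreducible closed subsets of $\p_X^{-1}(X_\sing)$, not just among the good ones, so the inclusion alone gives equality.

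The genuine gap is that the $E_i$ you produce is an irreducible component of $f^{-1}(X_\sing)$, which need not be a divisor on $Y$: it can have codimension $\ge 2$ (for instance when $f$ is a small resolution, which is automatically an isomorphism over $X_\sm$). The theorem asserts the existence of a \emph{divisor} $E_i$ over $X$ with $C_X(E_i)=C_i$, i.e.\ that $\val_{C_i}$ is a divisorial valuation, and you never verify this; you also use it tacitly in your last paragraph when you write $\val_{E_i'}$ and invoke its center on $Y'$. The repair is standard but has to be made: write $Z_i$ for the component you found, observe that the generic point $\~\g_i$ of $\p_Y^{-1}(Z_i)$ (which equals your lift, by uniqueness of lifting) is the generic arc through $Z_i$ and dominates $Y$, hence lifts to $\Bl_{Z_i}Y$ and meets the component $E_i$ of the exceptional divisor dominating $Z_i$; since $Y$ is smooth at the generic point of $Z_i$, where $Z_i$ is cut out by part of a regular system of parameters, one computes $\val_{\~\g_i}=\ord_{E_i}$ and $C_X(E_i)=\ov{f_\infty(\p_Y^{-1}(Z_i))}=C_i$. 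With this in place, $\val_{C_i}=\ord_{E_i}$ has center $Z_i$ on $Y$, the same computation on an arbitrary resolution $f'$ that is an isomorphism over $X_\sm$ gives essentiality exactly as you argue, and injectivity of $C_i\mapsto Z_i$ gives finiteness.
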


\begin{definition}
We say that a divisorial valuation $v$ on $X$ is a 
\emph{Nash valuation} if $v = \val_{C_i}$ for some good component $C_i$ of $\p_X^{-1}(X_\sing)$.
\end{definition}

The theorem implies that every Nash valuation is essential. Just like in characteristic
zero, one can formulate the Nash problem by asking for which varieties 
Nash valuations are the same as essential valuations. 

In the terminology introduced in Definition~\ref{d:fat-thin}, Theorem~\ref{t:IK-char-p} implies 
that the $C_i$ are the {fat} components of $\p_X^{-1}(X_\sing)$.
The $D_j$, instead, are the {thin} components because they are contained in $(X_\sing)_\infty$.

The argument in the proof of Theorem~\ref{t:Nash} showing that $f_\infty$ induces a dominant
map from $\p_Y^{-1}(f^{-1}(X_\sing))$ to $\p_X^{-1}(X_\sing)$ breaks down in positive characteristics, 
and thin components can actually occur in the decomposition of $\p_X^{-1}(X_\sing)$.

\begin{example}
\label{eg:Nash-char-p}
Let $X = (xy^p = z^p) \subset \A^3$ defined over a 
field $k$ of characteristic $p$ as in Example~\ref{eg:Kolchin-char-p}.
Since $(X_\sing)_\infty$ is an irreducible component of $X_\infty$, 
it is also an irreducible component of $\p_X^{-1}(X_\sing)$. 
In particular, it is a thin component of this set. 
For a different argument which looks at the normalization of $X$, see \cite[Example~2.13]{IK03}.
\end{example}

\begin{corollary}
If we assume the existence of a resolution of singularities of $X$, then 
$\p_X^{-1}(X_\sing)$ has finitely many irreducible components.
\end{corollary}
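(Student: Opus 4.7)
The plan is to split $\p_X^{-1}(X_\sing)$ into its fat (good) and thin components and bound each class of components separately. By Theorem~\ref{t:IK-char-p}, the hypothesis that $X$ admits a resolution $f \colon Y \to X$ isomorphic over $X_\sm$ directly gives finiteness of the good components, each realized as $C_X(E_i)$ for some irreducible component $E_i$ of $f^{-1}(X_\sing)$. So the real task is to handle the thin components, i.e.\ those irreducible components of $\p_X^{-1}(X_\sing)$ that are contained in $(X_\sing)_\infty$. Such a thin component $C$ is a maximal irreducible closed subset of $\p_X^{-1}(X_\sing)$ lying inside the closed subset $(X_\sing)_\infty$; any irreducible closed subset of $(X_\sing)_\infty$ strictly containing $C$ would contradict this maximality, so $C$ is automatically an irreducible component of $(X_\sing)_\infty$. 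Hence it suffices to show that $(X_\sing)_\infty$ has only finitely many irreducible components.

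I would prove the following slightly more general statement by induction on dimension: \emph{for any reduced scheme of finite type $W$ over our algebraically closed field $k$, the arc space $W_\infty$ has only finitely many irreducible components.} The base case $\dim W = 0$ is immediate because $W_\infty = W$ is then a finite set. For the inductive step, write $W = \bigcup_\ell W_\ell$ as the union of its irreducible components; then $W_\infty = \bigcup_\ell (W_\ell)_\infty$, so it suffices to handle each $W_\ell$, which is an irreducible variety over a perfect field. Reguera's theorem (Theorem~\ref{t:Reguera-irred}) applies and shows that $(W_\ell)_\infty$ has finitely many irreducible components, all but at most one of which lie in $((W_\ell)_\sing)_\infty$. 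Since $\dim (W_\ell)_\sing < \dim W_\ell \le \dim W$, the inductive hypothesis applies to $(W_\ell)_\sing$, and the result follows by combining finitely many finite collections.

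Applying the general statement to $W = X_\sing$, which has strictly smaller dimension than $X$, we conclude that $(X_\sing)_\infty$ has finitely many irreducible components, hence so does the set of thin components of $\p_X^{-1}(X_\sing)$. Adding the finitely many good components provided by Theorem~\ref{t:IK-char-p} finishes the proof. I do not foresee a serious obstacle here: the subtle point is simply confirming that Reguera's theorem is available at every step, which it is because $k$ is algebraically closed and therefore perfect, and that the resolution hypothesis on $X$ is used only once, to invoke Theorem~\ref{t:IK-char-p}; the thin part is controlled purely by Reguera plus induction and requires no further resolution assumption on the subvarieties $X_\sing$, $(X_\sing)_\sing$, and so on.
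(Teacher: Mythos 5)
Your proof is correct and follows essentially the same route as the paper: finiteness of the good components from Theorem~\ref{t:IK-char-p}, the observation that each thin component is an irreducible component of $(X_\sing)_\infty$, and finiteness of the latter from Theorem~\ref{t:Reguera-irred} applied to the irreducible components of $X_\sing$. The only difference is that your induction on dimension is superfluous: Reguera's theorem already asserts that the arc space of each irreducible component of $X_\sing$ has finitely many irreducible components, so there is no need to descend further into the singular loci.
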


\begin{proof}
We already know by Theorem~\ref{t:IK-char-p} that there are finitely many good components $C_i$, 
and the question is whether the number of thin components $D_j$ is finite. 
Since $D_j \subset (X_\sing)_\infty \subset \p_X^{-1}(X_\sing)$, each $D_j$ is
an irreducible component of $(X_\sing)_\infty$. Therefore it suffices to
check that $(X_\sing)_\infty$ has only finitely many irreducible components. 
Since $(X_\sing)_\infty$ is, set-theoretically, the union of the
arc spaces of the irreducible components of $X_\sing$, this property
follows from Theorem~\ref{t:Reguera-irred}.
\end{proof}

A possible way that may
get rid of the thin components in the decomposition of $\p_X^{-1}(X_\sing)$
is to restrict the attention to the main component $X_\infty^\main$
of $X_\infty$, namely, the irreducible component that dominates $X$. 
Let 
\[
\p_X^\main \colon X_\infty^\main \to X
\]
denote the restriction of $\p_X$. 

\begin{question}
Is every irreducible component of $(\p_X^\main)^{-1}(X_\sing)$ a good component of $\p_X^{-1}(X_\sing)$?
\end{question}

Resolutions which induce isomorphisms over the smooth locus are known to exist in positive characteristics
for surfaces and toric varieties.
It is therefore natural to consider the Nash problem for these classes of varieties. 

Regarding toric varieties, the proof of Theorem~\ref{t:Nash-toric}
is characteristic free, and therefore the statement holds in all characteristics. 

As for the case of surfaces, both proofs of Theorem~\ref{t:Nash-dim=2}
(the original one from \cite{FdBPP12} and the one 
based on \cite{dFD16}) use characteristic zero in an essential way. 
This is clear for the original proof where the problem is translated into
a topological problem. Most of the other proof (which is the one given here)
is characteristic free, but inequality~(4) relies on a computation which fails
if the map $\f'$ is wildly ramified at the generic point of some of the divisors $C_i$. 

This leaves the following question open.

\begin{question}
For a surface defined over an algebraically closed field of
positive characteristic, is every valuation associated to an exceptional divisor 
on the minimal resolution a Nash valuation?
\end{question}

Some cases are known, for instance the case of sandwiches singularities \cite{LJR99}.

\bibliographystyle{amsplain}

\begin{bibdiv}
\begin{biblist}

\bib{Bha}{article}{
   author={Bhatt, Bhargav},
   title={Algebraization and Tannaka duality},
   note={Preprint, {\tt arXiv:1404.7483}},
}

\bib{Bat99}{article}{
   author={Batyrev, Victor V.},
   title={Non-Archimedean integrals and stringy Euler numbers of
   log-terminal pairs},
   journal={J. Eur. Math. Soc. (JEMS)},
   volume={1},
   date={1999},
   number={1},
   pages={5--33},
}

\bib{dF13}{article}{
   author={de Fernex, Tommaso},
   title={Three-dimensional counter-examples to the Nash problem},
   journal={Compos. Math.},
   volume={149},
   date={2013},
   number={9},
   pages={1519--1534},
}

\bib{dFD16}{article}{
   author={de Fernex, Tommaso},
   author={Docampo, Roi},
   title={Terminal valuations and the Nash problem},
   journal={Invent. Math.},
   volume={203},
   date={2016},
   number={1},
   pages={303--331},
}

\bib{dFEI08}{article}{
   author={de Fernex, Tommaso},
   author={Ein, Lawrence},
   author={Ishii, Shihoko},
   title={Divisorial valuations via arcs},
   journal={Publ. Res. Inst. Math. Sci.},
   volume={44},
   date={2008},
   number={2},
   pages={425--448},
}

\bib{DL99}{article}{
   author={Denef, Jan},
   author={Loeser, Fran{\c{c}}ois},
   title={Germs of arcs on singular algebraic varieties and motivic
   integration},
   journal={Invent. Math.},
   volume={135},
   date={1999},
   number={1},
   pages={201--232},
}

\bib{Doc13}{article}{
   author={Docampo, Roi},
   title={Arcs on determinantal varieties},
   journal={Trans. Amer. Math. Soc.},
   volume={365},
   date={2013},
   number={5},
   pages={2241--2269},
}

\bib{DN}{article}{
   author={Docampo, Roi},
   author={Nigro, Antonio},
   title={The arc space of the Grassmannian},
   note={Preprint, {\tt arXiv: 1510.08833}},
}

\bib{Dri}{article}{
   author={Drinfeld, Vladimir},
   title={On the Grinberg--Kazhdan formal arc theorem},
   note={Preprint, {\tt arXiv: math.AG/ 0203263}},
}

\bib{ELM04}{article}{
   author={Ein, Lawrence},
   author={Lazarsfeld, Robert},
   author={Musta{\c{t}}{\v{a}}, Mircea},
   title={Contact loci in arc spaces},
   journal={Compos. Math.},
   volume={140},
   date={2004},
   number={5},
   pages={1229--1244},
}

\bib{EM04}{article}{
   author={Ein, Lawrence},
   author={Musta{\c{t}}{\u{a}}, Mircea},
   title={Inversion of adjunction for local complete intersection varieties},
   journal={Amer. J. Math.},
   volume={126},
   date={2004},
   number={6},
   pages={1355--1365},
}

\bib{EM09}{article}{
   author={Ein, Lawrence},
   author={Musta{\c{t}}{\u{a}}, Mircea},
   title={Jet schemes and singularities},
   conference={
      title={Algebraic geometry---Seattle 2005. Part 2},
   },
   book={
      series={Proc. Sympos. Pure Math.},
      volume={80},
      publisher={Amer. Math. Soc.},
      place={Providence, RI},
   },
   date={2009},
   pages={505--546},
}

\bib{EMY03}{article}{
   author={Ein, Lawrence},
   author={Musta{\c{t}}{\u{a}}, Mircea},
   author={Yasuda, Takehiko},
   title={Jet schemes, log discrepancies and inversion of adjunction},
   journal={Invent. Math.},
   volume={153},
   date={2003},
   number={3},
   pages={519--535},
}

\bib{FdB12}{article}{
   author={Fern{\'a}ndez de Bobadilla, Javier},
   title={Nash problem for surface singularities is a topological problem},
   journal={Adv. Math.},
   volume={230},
   date={2012},
   number={1},
   pages={131--176},
}

\bib{FdBPP12}{article}{
   author={Fern{\'a}ndez de Bobadilla, Javier},
   author={Pe Pereira, Mar{\'{\i}}a},
   title={The Nash problem for surfaces},
   journal={Ann. of Math. (2)},
   volume={176},
   date={2012},
   number={3},
   pages={2003--2029},
}

\bib{Gil02}{article}{
   author={Gillet, Henri},
   title={Differential algebra---a scheme theory approach},
   conference={
      title={Differential algebra and related topics},
      address={Newark, NJ},
      date={2000},
   },
   book={
      publisher={World Sci. Publ., River Edge, NJ},
   },
   date={2002},
   pages={95--123},
}

\bib{GP07}{article}{
   author={Gonz{\'a}lez P{\'e}rez, P. D.},
   title={Bijectiveness of the Nash map for quasi-ordinary hypersurface
   singularities},
   journal={Int. Math. Res. Not. IMRN},
   date={2007},
   number={19},
}

\bib{Gre66}{article}{
   author={Greenberg, Marvin J.},
   title={Rational points in Henselian discrete valuation rings},
   journal={Inst. Hautes \'Etudes Sci. Publ. Math.},
   number={31},
   date={1966},
   pages={59--64},
}

\bib{GK00}{article}{
   author={Grinberg, M.},
   author={Kazhdan, D.},
   title={Versal deformations of formal arcs},
   journal={Geom. Funct. Anal.},
   volume={10},
   date={2000},
   number={3},
   pages={543--555},
}

\bib{EGAiii_1}{article}{
   author={Grothendieck, A.},
   title={\'El\'ements de g\'eom\'etrie alg\'ebrique. III. \'Etude
   cohomologique des faisceaux coh\'erents. I},
   language={French},
   journal={Inst. Hautes \'Etudes Sci. Publ. Math.},
   number={11},
   date={1961},
   pages={167},
}

\bib{EGAiv_3}{article}{
   author={Grothendieck, A.},
   title={\'El\'ements de g\'eom\'etrie alg\'ebrique. IV. \'Etude locale des
   sch\'emas et des morphismes de sch\'emas. III},
   journal={Inst. Hautes \'Etudes Sci. Publ. Math.},
   number={28},
   date={1966},
   pages={255},
}

\bib{Hir64}{article}{
   author={Hironaka, Heisuke},
   title={Resolution of singularities of an algebraic variety over a field
   of characteristic zero. I, II},
   journal={Ann. of Math. (2) {\bf 79} (1964), 109--203; ibid. (2)},
   volume={79},
   date={1964},
   pages={205--326},
}

\bib{Ish04}{article}{
   author={Ishii, Shihoko},
   title={The arc space of a toric variety},
   journal={J. Algebra},
   volume={278},
   date={2004},
   number={2},
   pages={666--683},
}

\bib{Ish05}{article}{
   author={Ishii, Shihoko},
   title={Arcs, valuations and the Nash map},
   journal={J. Reine Angew. Math.},
   volume={588},
   date={2005},
   pages={71--92},
}

\bib{Ish06}{article}{
   author={Ishii, Shihoko},
   title={The local Nash problem on arc families of singularities},
   language={English, with English and French summaries},
   journal={Ann. Inst. Fourier (Grenoble)},
   volume={56},
   date={2006},
   number={4},
   pages={1207--1224},
}

\bib{Ish08}{article}{
   author={Ishii, Shihoko},
   title={Maximal divisorial sets in arc spaces},
   conference={
      title={Algebraic geometry in East Asia---Hanoi 2005},
   },
   book={
      series={Adv. Stud. Pure Math.},
      volume={50},
      publisher={Math. Soc. Japan, Tokyo},
   },
   date={2008},
   pages={237--249},
}
				
\bib{IK03}{article}{
   author={Ishii, Shihoko},
   author={Koll{\'a}r, J{\'a}nos},
   title={The Nash problem on arc families of singularities},
   journal={Duke Math. J.},
   volume={120},
   date={2003},
   number={3},
   pages={601--620},
}

\bib{JK13}{article}{
   author={Johnson, Jennifer M.},
   author={Koll{\'a}r, J{\'a}nos},
   title={Arc spaces of $cA$-type singularities},
   journal={J. Singul.},
   volume={7},
   date={2013},
   pages={238--252},
}

\bib{Kol73}{book}{
   author={Kolchin, E. R.},
   title={Differential algebra and algebraic groups},
   note={Pure and Applied Mathematics, Vol. 54},
   publisher={Academic Press},
   place={New York},
   date={1973},
   pages={xviii+446},
}

\bib{KN15}{article}{
   author={Koll{\'a}r, J{\'a}nos},
   author={N{\'e}methi, Andr{\'a}s},
   title={Holomorphic arcs on singularities},
   journal={Invent. Math.},
   volume={200},
   date={2015},
   number={1},
   pages={97--147},
}

\bib{Kon95}{book}{
   author={Kontsevich, Maxim},
   title={String cohomology},
   note={Lecture at Orsay},
   year={1995},
}

\bib{LJ80}{article}{
   author={Lejeune-Jalabert, Monique},
   title={Arcs analytiques et r\'esolution minimale des surfaces quasihomog\`enes},
   language={French},
   conference={
      title={S\'eminaire sur les Singularit\'es des Surfaces},
      address={Palaiseau, France},
      date={1976/1977},
   },
   book={
      series={Lecture Notes in Math.},
      volume={777},
      publisher={Springer},
      place={Berlin},
   },
   date={1980},
   pages={303Ð-332},
}

\bib{LJR99}{article}{
   author={Lejeune-Jalabert, Monique},
   author={Reguera, Ana J.},
   title={Arcs and wedges on sandwiched surface singularities},
   journal={Amer. J. Math.},
   volume={121},
   date={1999},
   number={6},
   pages={1191--1213},
}

\bib{LJR12}{article}{
   author={Lejeune-Jalabert, Monique},
   author={Reguera, Ana J.},
   title={Exceptional divisors that are not uniruled belong to the image of
   the Nash map},
   journal={J. Inst. Math. Jussieu},
   volume={11},
   date={2012},
   number={2},
   pages={273--287},
}

\bib{LA16}{article}{
   author={Leyton-Alvarez, Maximiliano},
   title={Familles d'espaces de $m$-jets et d'espaces d'arcs},
   language={French, with English and French summaries},
   journal={J. Pure Appl. Algebra},
   volume={220},
   date={2016},
   number={1},
   pages={1--33},
}

\bib{Mus01}{article}{
   author={Musta{\c{t}}{\u{a}}, Mircea},
   title={Jet schemes of locally complete intersection canonical
   singularities},
   note={With an appendix by David Eisenbud and Edward Frenkel},
   journal={Invent. Math.},
   volume={145},
   date={2001},
   number={3},
   pages={397--424},
}

\bib{Mus02}{article}{
   author={Musta{\c{t}}{\u{a}}, Mircea},
   title={Singularities of pairs via jet schemes},
   journal={J. Amer. Math. Soc.},
   volume={15},
   date={2002},
   number={3},
   pages={599--615 (electronic)},
}
		
\bib{Nas95}{article}{
   author={Nash, John F., Jr.},
   title={Arc structure of singularities},
   note={A celebration of John F. Nash, Jr.},
   journal={Duke Math. J.},
   volume={81},
   date={1995},
   number={1},
   pages={31--38 (1996)},
}

\bib{NS10}{article}{
   author={Nicaise, Johannes},
   author={Sebag, Julien},
   title={Greenberg approximation and the geometry of arc spaces},
   journal={Comm. Algebra},
   volume={38},
   date={2010},
   number={11},
   pages={4077--4096},
}

\bib{Pas89}{article}{
   author={Pas, Johan},
   title={Uniform $p$-adic cell decomposition and local zeta functions},
   journal={J. Reine Angew. Math.},
   volume={399},
   date={1989},
   pages={137--172},
}

\bib{PP13}{article}{
   author={Pe Pereira, Mar{\'{\i}}a},
   title={Nash problem for quotient surface singularities},
   journal={J. Lond. Math. Soc. (2)},
   volume={87},
   date={2013},
   number={1},
   pages={177--203},
}

\bib{Ple05}{article}{
   author={Pl{\'e}nat, Camille},
   title={\`A propos du probl\`eme des arcs de Nash},
   language={French, with English and French summaries},
   journal={Ann. Inst. Fourier (Grenoble)},
   volume={55},
   date={2005},
   number={3},
   pages={805--823},
}

\bib{Ple08}{article}{
   author={Pl{\'e}nat, Camille},
   title={The Nash problem of arcs and the rational double points $D_n$},
   language={English, with English and French summaries},
   journal={Ann. Inst. Fourier (Grenoble)},
   volume={58},
   date={2008},
   number={7},
   pages={2249--2278},
}

\bib{PPP06}{article}{
   author={Pl{\'e}nat, Camille},
   author={Popescu-Pampu, Patrick},
   title={A class of non-rational surface singularities with bijective Nash
   map},
   language={English, with English and French summaries},
   journal={Bull. Soc. Math. France},
   volume={134},
   date={2006},
   number={3},
   pages={383--394},
}

\bib{PPP08}{article}{
   author={Pl{\'e}nat, Camille},
   author={Popescu-Pampu, Patrick},
   title={Families of higher dimensional germs with bijective Nash map},
   journal={Kodai Math. J.},
   volume={31},
   date={2008},
   number={2},
   pages={199--218},
}

\bib{PS12}{article}{
   author={Pl{\'e}nat, Camille},
   author={Spivakovsky, Mark},
   title={The Nash problem of arcs and the rational double point ${\rm E}_6$},
   journal={Kodai Math. J.},
   volume={35},
   date={2012},
   number={1},
   pages={173--213},
}

\bib{PS15}{article}{
   author={Pl{\'e}nat, Camille},
   author={Spivakovsky, Mark},
   title={The Nash problem and its solution: a survey},
   journal={J. Singul.},
   volume={13},
   date={2015},
   pages={229--244},
}

\bib{Reg95}{article}{
   author={Reguera, Ana J.},
   title={Families of arcs on rational surface singularities},
   journal={Manuscripta Math.},
   volume={88},
   date={1995},
   number={3},
   pages={321--333},
}

\bib{Reg06}{article}{
   author={Reguera, Ana J.},
   title={A curve selection lemma in spaces of arcs and the image of the
   Nash map},
   journal={Compos. Math.},
   volume={142},
   date={2006},
   number={1},
   pages={119--130},
}

\bib{Reg09}{article}{
   author={Reguera, Ana J.},
   title={Towards the singular locus of the space of arcs},
   journal={Amer. J. Math.},
   volume={131},
   date={2009},
   number={2},
   pages={313--350},
}

\bib{Reg12}{article}{
   author={Reguera, Ana J.},
   title={Arcs and wedges on rational surface singularities},
   journal={J. Algebra},
   volume={366},
   date={2012},
   pages={126--164},
}

\bib{Voj07}{article}{
   author={Vojta, Paul},
   title={Jets via Hasse-Schmidt derivations},
   conference={
      title={Diophantine geometry},
   },
   book={
      series={CRM Series},
      volume={4},
      publisher={Ed. Norm., Pisa},
   },
   date={2007},
   pages={335--361},
}

\bib{Zel83}{article}{
   author={Zelevinski{\u\i}, A. V.},
   title={Small resolutions of singularities of Schubert varieties},
   language={Russian},
   journal={Funktsional. Anal. i Prilozhen.},
   volume={17},
   date={1983},
   number={2},
   pages={75--77},
}

\end{biblist}
\end{bibdiv}

\end{document}